\renewcommand{\Re}{{\operatorname{Re}\,}}
\renewcommand{\Im}{{\operatorname{Im}\,}}
\newcommand{\Op}{\operatorname{Op}}
\newcommand{\sgn}{\operatorname{sgn}}
\renewcommand{\epsilon}{\varepsilon}
\newcommand{\R}{{\mathbb R}}
\newcommand{\C}{{\mathbb C}}
\newcommand{\Z}{{\mathbb Z}}
\newcommand{\lan}{\left\langle}
\newcommand{\ran}{\right\rangle}
\newcommand{\mc}[1]{\mathcal{#1}}
\newcommand{\e}{\epsilon}
\newcommand{\re}{\mathbb{R}}
\newcommand{\Cc}{C_c^\infty}
\newcommand{\Id}{\operatorname{Id}}
\newcommand{\dbar}{\bar\partial}
\newcommand{\supp}{{\operatorname{supp\,}}}
\renewcommand{\phi}{\varphi}
\newcommand{\ep}{\varepsilon}
\newtheorem{theo}{{\sc Theorem}}
\newtheorem{cor}{{\sc Corollary}}[section]
\newtheorem{lem}[cor]{{\sc Lemma}}
\newtheorem{prop}[cor]{{\sc Proposition}}
\numberwithin{equation}{section}
\newenvironment{rem}[1][]{\refstepcounter{cor}{\medskip\noindent{\it Remark~\thecor :\/#1} }}{\medskip}
\newcommand{\red}[1]{{#1}}
\title[Eigenfunction bounds]{Pointwise bounds for Steklov eigenfunctions}
\author{Jeffrey Galkowski}
\address{Department of Mathematics, Stanford University, Stanford, CA, USA}
\email{jeffrey.galkowski@stanford.edu }
\author{John A. Toth}
\address{Department of Mathematics and Statistics, McGill University, Montr\'eal, QC, Canada}
\email{jtoth@math.mcgill.ca} 
\date{}
\begin{document}

\begin{abstract}  
%Let $(\Omega,g)$ be a compact, real-analytic Riemannian manifold with real-analytic boundary $\partial \Omega.$ The harmonic extensions of the boundary Dirchlet-to-Neumann eigenfunctions are called {\em Steklov eigenfunctions}.  In Theorem \ref{mainthm} we show that asymptotically the Steklov eigenfuntions decay exponentially in terms of the Dirichlet-to-Neumann eigenvalues and give a sharp rate of decay to first order at the boundary. The proof uses the Poisson representation for the Steklov eigenfunctions combined with sharp $h$-microlocal concentration estimates (Theorems \ref{mainthm2} and \ref{mainthm3}) for the boundary Dirichlet-to-Neumann eigenfunctions near the cosphere bundle $S^*\partial \Omega.$

 Let $(\Omega,g)$ be a compact, real-analytic Riemannian manifold with real-analytic boundary $\partial \Omega.$ The harmonic extensions of the boundary Dirchlet-to-Neumann eigenfunctions are called {\em Steklov eigenfunctions}.  We show that the Steklov eigenfuntions decay exponentially into the interior in terms of the Dirichlet-to-Neumann eigenvalues and give a sharp rate of decay to first order at the boundary. The proof uses the Poisson representation for the Steklov eigenfunctions combined with sharp $h$-microlocal concentration estimates for the boundary Dirichlet-to-Neumann eigenfunctions near the cosphere bundle $S^*\partial \Omega.$ These estimates follow from sharp estimates on the concentration of the FBI transforms of solutions to analytic pseudodifferential equations $Pu=0$ near the characteristic set $\{\sigma(P)=0\}$. 
\end{abstract}

\maketitle
\section{Introduction} 

Let $(\Omega,g)$ be an $n$-dimensional, compact $C^{\infty}$ Riemannian manifold  with boundary $M$ and corresponding unit exterior normal $\nu$. By some abuse of notation, we also let $\nu$ denote a smooth vector field extension and $\gamma_{M}: C^0(\Omega) \to C^0(M)$ be the boundary restriction map.   Let ${\mathcal D}: C^{\infty}(M) \to C^{\infty}(M)$  be the associated Dirichlet-to-Neumann (DtN) operator defined by
\begin{equation} \label{DtN}
{\mathcal D} f :=  \gamma_{M} \partial_{\nu} u
\end{equation} 
where $u$ solves the Dirichlet problem

\begin{align} \label{dirichlet}
\Delta_{g} u(x) &= 0, \,\,\, x \in \Omega, \nonumber \\ 
u(q) &= f(q), \,\,\, q \in M. \end{align}\

%We consider the Poisson problem

%$$ \Delta_g u_{h}(x) = 0, \,\, x \in \Omega$$
%$$ \gamma_{\partial \Omega} u_h = \phi_h$$

The operator ${\mathcal D}$ is an ellptic, first order, self-adjoint pseudodifferential operator (see for example \cite[Section 7.11]{Tayl2}) with an $L^2$-normalized basis of eigenfunctions $\phi_{j}; j=1,2,....$ It is useful here to work in the semiclasscial setting from the outset. Choosing $h^{-1} \in \text{Spec} \, {\mathcal D},$ the corresponding eigenfunction $\phi_h$ then satisfies the semiclassical eigenfunction equation
$$ h {\mathcal D}\phi_h = \phi_h.$$
The harmonic extension, $u_{h} \in C^{\infty}(\Omega),$ of a DtN eigenfunction $\phi_h$ is called a {\em Steklov eigenfunction.}

There has been a substantial amount of recent work devoted to the study of the asymptotic behaviour of the DtN eigenvalues and both DtN and Steklov eigenfunctions, including the asymptotics of eigenfunction nodal sets (see for example \cite{BLin,GP,GPPS,HL,PST,Sh,SWZ, Ze,Zh, Zhu} and references therein). 
%A central feature of the latter involves estimating sup bounds of the Steklov eigenfunctions, a problem that we adress in Theorem \ref{mainthm}. 

For large eigenvalues, Steklov eigenfunctions possess both high oscillation inherited from the boundary DtN eigenfunctions and very sharp decay into the interior of $\Omega.$   As a consequence, even though Steklov eigenfunctions decay rapidly, the oscillation implies, in particular, that the nodal sets have intricate structure. It has been conjectured \cite{GP} that the analogue of Yau's conjecture \cite{Yau,Yau2}
 for nodal  volumes holds in the Steklov case. This was recently proved for real-analytic Riemann surfaces in \cite{PST}.
% In the $C^{\infty}$-case, an integration by parts argument \cite{HL} shows that $|u_{h}(x)| = O_N(h^N)$ for $x \in int(\Omega)$ and  $d(x,M) > \ep_0 >0.$
  
  The question of decay of Steklov eigenfunctions into the interior of $M$ when $(M,g)$ is real analytic was first raised by Hislop--Lutzer \cite{HL} where they conjecture that the Steklov eigenfunctions decay into the interior as $e^{-d(x,\partial\Omega)/h}.$  In the special case where dim $\Omega =2$ exponential decay with respect to $d(x,\partial\Omega)$ was indeed proved in \cite{PST}  and the eigenfunction decay is  a key feature in  their main results on nodal length. However, the analysis in \cite{PST} relies heavily on the assumption that the dimension equals two.
 
 %to prove that certain quasimodes are exponentially accurate from which bounds on the size of nodal sets were obtained. 

In Theorem \ref{mainthm}, we  prove an exponential decay result for  Steklov eigenfunctions for general real-analytic metrics in arbitrary dimension that is sharp to first order at the boundary, $\partial \Omega = M$,  and we bound the quadratic error in the decay rate  in terms of boundary curvature. In particular, we prove the conjecture due to Hislop--Lutzer \cite{HL}. 

One can heuristically view such exponential decay estimates for the Steklov eigenfunctions as describing the `tunnelling' of the boundary DtN eigenfunctions on $M$ into the interior of the manifold $\Omega.$ In the Schr\"odinger case $P(h)=-h^2\Delta_g+V-E$, one thinks of eigenfunctions as tunnelling from $V\leq E$ into the forbidden region $V>E$. Since the interior eigenfunctions are harmonic, this decay is somewhat more subtle in the case of Steklov eigenfunctions. The boundary data $\varphi_h$ concentrate microlocally on the cosphere bundle of the boundary, $S^*\partial\Omega$, while the fact that $-h^2\Delta_gu_h=0$ implies that $u_h$ must concentrate at the zero section {  $\{ (x,\xi) \in T^*\Omega;  \xi = 0 \}.$ } Thus, we think of $u_h$ as tunnelling from boundary values with $|\xi|_g=1$ to the interior where $\xi=0$. For this reason, it is reasonable to view the decay estimates in Theorem \ref{mainthm} as a natural analogue of the well-known Agmon-Lithner estimates (see for example \cite{HeSj}) for Schr\"{o}dinger eigenfunctions in classically forbidden regions. { We note that the assumption that $(\Omega,g)$ is $C^{\omega}$ is necessary in Theorem \ref{mainthm} below because real-analyticity allows for  accuracy up to exponential errors in $h$ in the pseudodifferential calculus, whereas in the $C^\infty$ case, one can only work to $O(h^{\infty})$-error. In particular, one can only microlocalize modulo such errors. Since Steklov eigenfunctions decay exponentially in $h$ in the interior of $\Omega$, the usual $C^{\infty}$ semiclassical calculus of operators is not accurate enough to deal with these functions in a rigorous fashion.  Similarily, our subsequent more general results in Theorems \ref{mainthm2} and \ref{mainthm3} hinge on the microlocal exponential weighted estimate in Proposition \ref{weightedl2} which also requires real-analyticity to effectively control error terms.}

\begin{theo} \label{mainthm}
Let { $(\Omega^{n+1},g)$ } be a compact, {  real-analytic ($C^{\omega}$) } Riemannian manifold with $C^{\omega}$ boundary  $\partial \Omega$ and ${\mathcal D}: C^{\infty}(\partial \Omega) \to C^{\infty}(\partial \Omega)$ be the associated DtN operator. Then for all $\delta>0$ there exist $0<\e_0=\e_0(\Omega^n,g,\delta)$  such that for $\phi_h\in C^\omega(\partial\Omega)$ with
$$(h\mc{D}-1)\phi_h=0,\quad \|\phi_h\|_{L^2(\partial\Omega)}=1,$$
 the harmonic extension $u_{h}(x)$  satisfies the exponential decay estimate
\begin{equation} \label{basic bound}
| \partial_{x}^{\alpha} u_{h}(x) | \leq C_{\alpha,\delta} h^{  -\frac{\red{n}}{2}  + \frac{1}{4} - |\alpha|} \, \exp \Big(- d(x) / h\Big),\quad d_{\partial\Omega}(x)<\e_0, 
\end{equation}
{ where $C_{\alpha,\delta}>0$ is a constant independent of $h.$}
In \eqref{basic bound},
$$ d(x) = d_{\partial \Omega}(x) +(C_{\Omega,g}-\delta)d_{\partial \Omega}^2(x),$$
where $d_{\partial \Omega}(x)$ is the Riemannian distance  to the boundary and 
\begin{equation}
\label{secondterm}
C_{\Omega,g}=-\frac{3}{2}+\frac{1}{2}\inf_{(x',\xi')\in S^*\partial\Omega}Q(x',\xi').
\end{equation}
Here $Q(x',\xi')$ is the symbol of the second fundamental form of the boundary $\partial \Omega.$
\end{theo}

\begin{rem}
The estimate \eqref{basic bound} is only valid in a small collar neighborhood of $\partial\Omega$ and indeed, since $C_{\Omega,g}$ may be negative, the rate function $d(x)$ may cease to give exponential decay outside a collar neighborhood of $\partial\Omega$. 
However, the maximum principle for the Laplace equation on $\Omega_\e=\{x\in \Omega\mid d_{\partial\Omega}(x)>\e\}$ together with Theorem \ref{mainthm} also implies that for any $\e>0$, there exists $C,c>0$ so that 
$$|u_h(x)|\leq Ce^{-c/h},\quad d_{\partial\Omega}(x)>\e.$$
Unfortunately, we lose control of the rate function outside a small collar neighborhood of the boundary.
\end{rem}

We will see by examining the case of $\Omega=B(0,R)\subset\re^2$ that the rate of decay in \eqref{basic bound} is optimal to first order at the boundary. We do not expect to be able to obtain the optimal second order estimate since we are forced to throw away some of the oscillations in $u$ when we apply the Cauchy-Schwarz inequality in (\ref{IDEA}), however, the behavior with respect to $Q$ is optimal as we will see from several examples.

We point out that the estimate in Theorem \ref{mainthm} holds without change if one takes the $\phi_h$ to be $L^2$-normalized Laplace eigenfunctions on the boundary. Indeed,  the bound in Theorem \ref{mainthm} can be adapted to the case of harmonic extensions of eigenfunctions of general elliptic, analytic, self-adjoint $h$-pseudodifferential operators on $M$. However, the bounds are somewhat cumbersome to state and we do not pursue this here.

It is worth noting that the proof of Theorem \ref{mainthm} is microlocal and thus the constant $C_{\Omega,g}$ can be made to depend on the nearest point in $\partial\Omega$. In particular, let $(x',x_{\red{n+1}})$ be Fermi normal coordinates in a collar neighborhood of $\partial\Omega$ so that $x_{\red{n+1}}=d_{\partial\Omega}(x)$. Then the estimate \eqref{basic bound} holds with $d(x)$ replaced by
$$\tilde{d}(x)=x_{\red{n+1}}+(a_{\Omega,g}(x')-\delta)x_{\red{n+1}}^2$$
where 
$$a_{\Omega,g}(x')=-\frac{3}{2}+\frac{1}{2}\inf_{p\in S^*_{x'}\partial\Omega}Q(p).$$

\subsection{Examples of Steklov Eigenfunctions -- sharpness of $d(x)$ to first order}$ $

We now examine a few examples to illustrate  the results of Theorem \ref{mainthm}.
\subsubsection{The Disk}
\label{s:e1}
Let $\Omega=B(0,R)\subset \re^2$. Then the Steklov eigenvalues are precisely $\sigma=0,\frac{1}{R},\frac{2}{R}\dots $ with corresponding Steklov eigenfunctions given by 
\begin{equation}
\label{e:e1}u^{\pm}_{\red{k}}=\frac{1}{\sqrt{2\pi R}R^n}r^{\red{k}}e^{\pm i{\red{k}}\theta},\quad \sigma =\frac{{\red{k}}}{R}.
\end{equation}
In particular, letting $h=\sigma^{-1}={\red{k}}^{-1}R$, 
$$u^{\pm}_{\red{k}}=\frac{1}{\sqrt{2\pi R}}e^{[R\log (1-(R-r)/R))]/h}e^{i\theta/h}.$$
Therefore, in this case 
$$d(x)=-R\log (1-d_{\partial\Omega}(x)/R)=\sum_{{\red{j}}=1}^\infty \frac{[d_{\partial\Omega}(x)]^{\red{j}}}{{\red{j}}R^{{\red{j}}-1}}.$$
This shows that to first order, the results of Theorem \ref{mainthm} are sharp. Moreover, notice that the second fundamental form of $\partial B(0,R)$ is given by $R^{-1}$ and thus, for the disk, the optimal quadratic term is
\begin{equation}
\label{e:trueQuadratic}\frac{1}{2}\inf_{p\in S^*\partial\Omega}Q(p)[d_{\partial\Omega}(x)]^2
\end{equation}
hence, modulo the $\frac{3}{2}$ in \eqref{secondterm}, the constant $C_{\Omega,g}$ sharp. In particular, as the curvature of the boundary increases, the decay into the interior becomes more rapid.

The case of spheres in higher dimensions is nearly identical if we replace $e^{\pm in\theta}$ by a spherical harmonic.

\subsubsection{Cylinders}
Let $(M,g)$ be a real analytic manifold of dimension $n$ without boundary and $\Omega=(-1,1)_t\times M_x$ with metric $dt^2+g(x).$ Then 
$$\Delta_\Omega=\partial_t^2+\Delta_M.$$
Let $\varphi_{\red{k}}$ be an orthonormal basis for $L^2(M)$ with 
$$(-\Delta_M-\lambda_{\red{k}}^2)\varphi_{\red{k}}=0.$$
Then the Steklov eigenfunctions are given by 
$$u_h(t,x)=\frac{\cosh(\lambda_{\red{k}}t)}{\cosh(\lambda_{\red{k}})}\varphi_{\red{k}}(x),\qquad v_h(x,t)=\frac{\sinh(\lambda_{\red{k}}t)}{\sinh(\lambda_{\red{k}})}\varphi_{\red{k}}(x)$$
with Steklov eigenvalues $\sigma_{\red{k}}=\lambda_{\red{k}}\tanh(\lambda_{\red{k}})$ and $\sigma'_{\red{k}}=\lambda_{\red{k}}\coth(\lambda_{\red{k}})$ respectively. Notice that for $\lambda_{\red{k}}\gg 1$, 
$$\cosh(x)=\frac{1}{2}e^{|x|}+O(e^{-|x|}),\qquad \sinh(x)=\frac{\sgn(x)}{2}e^{|x|}+O(e^{-|x|}).$$

In particular, near $|t|=1$,
$$|u_h(t,x)|=(e^{-\lambda_{\red{k}}(1-|t|)}+O(e^{-\lambda_{\red{k}}}))|\varphi_{\red{k}}(x)|.$$
Then, notice that 
$$\sigma_{\red{k}}=\lambda_{\red{k}}(1+O(e^{-\lambda_{\red{k}}})).$$
So, using H\"ormander's $L^\infty$ bounds (see e.g. \cite[Chapter 7]{Zw}) we have the estimate
$$|u_h(t,x)|=(e^{-\sigma_{\red{k}}(1-|t|)}+O(e^{-\sigma_{\red{k}}}))|\varphi_{\red{k}}(x)|\leq C\sigma_{\red{k}}^{\frac{n-{\red{1}}}{2}}e^{-\sigma_{\red{k}}(1-|t|)}.$$
Indeed, it is not hard to construct examples (e.g. where $M$ is the sphere) so that this estimate is sharp.
The analysis is similar for $v_h$.

In particular, the best possible decay rate is given by
$$d(x)=d_{\partial\Omega}(x)$$
and we again see that the first order term in Theorem \ref{mainthm} is sharp and that the quadratic term is given by \eqref{e:trueQuadratic} since in this case the second fundamental form is $0$. 

\subsubsection{The Annulus}
\label{s:e2}
Now, consider $B(0,1)\setminus B(0,r_0)\subset \re^2$. Then a simple computation shows that the Steklov eigenvalues are the roots, of 
$$p_{\red{k}}(\sigma)=\sigma^2-\sigma {\red{k}}\left(\frac{1+r_0}{r_0}\right)\left(\frac{1+r_0^{2{\red{k}}}}{1-r_0^{2{\red{k}}}}\right)+\frac{{\red{k}}^2}{r_0},\quad {\red{k}}=0,1,\dots$$
with corresponding eigenfunctions
\begin{equation}
\label{e:e2}
u^{\pm}_\sigma(r,\theta)=C_{{\red{k}},\sigma}e^{\pm in\theta}\left(r^{\red{k}}+\frac{{\red{k}}-\sigma}{{\red{k}}+\sigma}r^{-{\red{k}}}\right).
\end{equation}
It is easy to show that the roots of $p_n(\sigma)$ have 
$$\sigma_{{\red{k}},1}={\red{k}}+O({\red{k}}r_0^{2{\red{k}}}),\quad \sigma_{{\red{k}},2}=\frac{{\red{k}}}{r_0}+O({\red{k}}r_0^{2{\red{k}}}).$$
Then, 
\begin{gather*} 
u^{\pm}_{\sigma_{{\red{k}},1}}=\frac{1}{\sqrt{2\pi}}e^{\pm i{\red{k}}\theta}r^{\red{k}}+O(e^{-cv}),\quad u^{\pm}_{\sigma_{{\red{k}},2}}=\frac{1}{\sqrt{2\pi r_0}}r_0^{\red{k}}r^{-{\red{k}}}e^{\pm i{\red{k}}\theta}+O(e^{-c{\red{k}}}).
\end{gather*}
The case of $u_{\sigma_{{\red{k}},1}}$ is identical to that for the disk, so we focus on $u_{\sigma_{{\red{k}},2}}$. Let $h=\sigma_{{\red{k}},2}^{-1}=r_0{\red{k}}^{-1}+O(e^{-c{\red{k}}}).$ Then,
$$|u^{\pm}_{\sigma_{{\red{k}},2}}(r,\theta)|=\frac{1}{\sqrt{2\pi r_0}}e^{-r_0\log[1+ (r-r_0)/r_0]/h}(1+O(e^{-c/h})).$$
Therefore, in this case 
$$d(x)=-r_0\log (1+d_{\partial\Omega}(x)/r_0)=-\sum_{{\red{j}}=1}^\infty \frac{[-d_{\partial\Omega}(x)]^{\red{j}}}{{\red{j}}r_0^{{\red{j}}-1}}.$$
This shows again that to first order, the results of Theorem \ref{mainthm} are sharp. Moreover, notice that the second fundamental form of $\partial\Omega$ near $\partial B(0,r_0)$ is given by $-r_0^{-1}$ and thus, near this boundary component, the optimal quadratic term is again given by \eqref{e:trueQuadratic}.

\subsection{Microlocal Estimates}

It is clear from the approximate Poisson formula for $u_h(x)$ (see \eqref{poisson} below) that the exponential eigenfunction decay in the interior $\Omega$ is closely related to the precise rate of $h$-microlocal exponential decay of the boundary DtN eigenfunctions $\phi_h$ off the cosphere bundle $S^*\partial \Omega = \{(y,\eta) \in T^*\partial \Omega; |\eta|_g =1 \}.$ To derive the requisite bounds, we prove weighted exponential $h$-microlocal estimates for the associated wave packets $T(h) \phi_h$ where $T(h): C^{\infty}(\partial \Omega) \to C^{\infty}(T^*\partial \Omega)$ is a globally-defined FBI transform in the sense of \cite{Sj}.  Since these estimates seem of independent interest, we prove them for a rather general class of analytic $h$-pseudodifferential operators. An important consequence is the following exponential decay estimate for $T(h) \phi_h$ off the characteristic variety when $\phi_h$ solves $P(h)\phi_h=O(e^{-c/h}).$

We recall that an operator $P(h) \in Op_{h}(S^{0,k})$ with principal symbol $p(y,\eta)$ is said to have  simple characteristics provided $dp \neq 0$ on the set $\{p=0 \}.$ Moreover, $p(y,\eta)$ is classically elliptic if $|p(y,\eta)| \geq C' \langle \eta \rangle^{k}$ for $|\eta| \geq C$ with constants $C,C' >0.$ Let $S^{m,k}_{cla}$ denote the class of classical analytic symbols (see section \ref{s:FBIDescription}).

\begin{theo} \label{mainthm2} Let $(M^{\red{n}},g)$ be a compact, closed, real-analytic manifold and  $P(h) \in Op_h (S^{0,k}_{cla})$ be an analytic, $h$-pseudodifferential operator with real, classically elliptic principal symbol $p(x,\xi)$ having simple characterisitics.  Suppose that
$$P(h)\phi_h=O_{L^2}(e^{-c/h}) ,\quad \|\phi_h\|_{L^2}=1.$$

\noindent Let $T(h): C^{\infty}(M) \to C^{\infty}(T^*M)$ be a globally-defined FBI transform as in \eqref{FBIphase} associated with an $h$-ellptic symbol $a \in S^{3{\red{n}}/4,{\red{n}}/4}_{cla}$  and consider the weight function
$$\psi(x,\xi) =  \frac{ \delta  \cdot p^2(x,\xi)}{ \langle \xi \rangle^{2k}}.$$ \  Then, provided $\delta >0$ is a sufficiently small positive constant depending on $(M,g),$ it follows that   for $h \in (0,h_0(\delta)]$ with $h_0(\delta)>0$ sufficiently small,

\begin{equation}\label{e:global} \| e^{\psi/h} T(h) \phi_h \|_{L^2(T^*M)} = O(1).\end{equation}\
\end{theo}\

The main technical ingredient needed for the proof of Theorem \ref{mainthm2} is given in Proposition \ref{weightedl2};  it is essentially the manifold analogue of the microlocal exponential weighted estimates in $\R^n$ proved by Martinez \cite{Mar, Mar2}  and Nakamura \cite{Na} (see also \cite{T}). 
As a direct application of Theorem \ref{mainthm2}, we prove the  first-order exponential decay estimate in Theorem \ref{mainthm} \eqref{basic bound}.

\begin{rem}Although  weaker than  Theorem \ref{mainthm2} (since no rate of decay is specified), we point out that the following exponential decay estimate is an immediate consequence of  Theorem \ref{mainthm2}.

 \begin{cor} \label{thm2}For fixed $\ep_0 >0$ consider the cutoff $\chi_{\ep_0}(x,\xi) := \chi \Big( \frac{ p(x,\xi)}{\ep_0} \Big).$
Then, for any {$\phi_h$ satisfying the assumptions in Theorem \ref{mainthm2},} there exists constant $C(\ep_0)>0$ such that  for $h \leq h(\ep_0),$ 
$$ \|  (1-\chi_{\ep_0}) T(h) \phi_h \|_{L^2(\partial \Omega)} = O(e^{-C(\ep_0)/h}).$$
\end{cor}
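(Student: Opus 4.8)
The plan is to obtain Corollary~\ref{thm2} as an immediate consequence of the weighted estimate \eqref{e:global} in Theorem~\ref{mainthm2}; the only real content is a uniform lower bound for the weight $\psi(x,\xi) = \delta\,p^2(x,\xi)/\langle\xi\rangle^{2k}$ on the support of $1-\chi_{\ep_0}$. Fix $\delta>0$ as in Theorem~\ref{mainthm2}, so that $\|e^{\psi/h}T(h)\phi_h\|_{L^2(T^*M)} = O(1)$ for $h\le h_0(\delta)$, and recall that $\chi\in C_c^\infty(\R)$ is a cutoff with $\chi\equiv 1$ on a neighborhood $[-c_0,c_0]$ of the origin, so that $\chi_{\ep_0}(x,\xi)=1$ whenever $|p(x,\xi)|\le c_0\ep_0$. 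Consequently
$$\supp(1-\chi_{\ep_0})\subseteq\{(x,\xi)\in T^*M : |p(x,\xi)|\ge c_0\ep_0\}.$$

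On this set I would bound $\psi$ from below by splitting into the region $\{|\xi|\ge C\}$ and its complement, where $C$ is the constant in the definition of classical ellipticity. For $|\xi|\ge C$, classical ellipticity gives $|p(x,\xi)|\ge C'\langle\xi\rangle^{k}$, hence $\psi(x,\xi)\ge\delta (C')^2$, a bound independent of $\ep_0$. On the remaining region $\{|\xi|\le C\}\cap\supp(1-\chi_{\ep_0})$ the quantity $\langle\xi\rangle^{2k}$ is bounded above by a constant $c_1=c_1(C,k)$ while $|p(x,\xi)|\ge c_0\ep_0$, so that $\psi(x,\xi)\ge\delta c_0^2\ep_0^2/c_1$. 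Setting $C(\ep_0):=\min\{\delta(C')^2,\ \delta c_0^2\ep_0^2/c_1\}>0$, we conclude that $\psi\ge C(\ep_0)$ everywhere on $\supp(1-\chi_{\ep_0})$.

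Finally I would combine these observations: since $1-\chi_{\ep_0}$ is supported in the region where $\psi\ge C(\ep_0)$, we have the pointwise bound
$$|(1-\chi_{\ep_0})\,T(h)\phi_h| \le e^{-C(\ep_0)/h}\,|e^{\psi/h}T(h)\phi_h|$$
on all of $T^*M$. Taking $L^2(T^*M)$ norms and invoking \eqref{e:global} then yields
$$\|(1-\chi_{\ep_0})T(h)\phi_h\|_{L^2(T^*M)} \le e^{-C(\ep_0)/h}\,\|e^{\psi/h}T(h)\phi_h\|_{L^2(T^*M)} = O(e^{-C(\ep_0)/h}),$$
valid for $h\le h(\ep_0):=h_0(\delta)$, which is the asserted estimate.

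Since all of the analytic microlocal analysis has already been carried out in Theorem~\ref{mainthm2} (via Proposition~\ref{weightedl2}), there is essentially no obstacle in this deduction. The one point deserving care is the uniformity of the lower bound on $\psi$ as $|\xi|\to\infty$; this is precisely where the \emph{classical} ellipticity of $p$ enters, rather than merely the simplicity of its characteristics, since the factor $\langle\xi\rangle^{-2k}$ in the weight could otherwise degrade the estimate at infinity.
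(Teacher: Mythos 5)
Your proposal is correct and follows exactly the route the paper intends: the corollary is stated there as an immediate consequence of Theorem \ref{mainthm2}, obtained precisely by bounding the weight $\psi=\delta p^2/\langle\xi\rangle^{2k}$ from below on $\supp(1-\chi_{\ep_0})$ (using classical ellipticity for large $|\xi|$ and $|p|\ge c\ep_0$ on the bounded region) and then applying \eqref{e:global}. The paper only additionally remarks that the same conclusion can be reached via the analytic parametrix construction of Proposition \ref{p:longRange}, but your deduction is the primary one and is complete.
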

\noindent We also note that Corollary \ref{thm2} also follows from a rather standard parametrix construction for analytic $h$-pseudodifferential operators (see subsection \ref{longrange}).  \end{rem}

To prove Theorem \ref{mainthm} \eqref{secondterm}, one must  estimate the constant $C_{\Omega,g}$ in the rate function $d(x) = d_{\partial \Omega}(x) - C_{\Omega,g} d^2_{\partial \Omega}(x).$ Unfortunately,   the weighted $L^2$ bound in Theorem \ref{mainthm2} for $T \phi_h$ does not quite suffice for this since one needs an additional geometric bound for the constant $\delta >0.$ To  achieve this, we must $h$-microlocally refine the bound in Theorem \ref{mainthm2}. 
Global refinement of the  decay estimate for  $T(h)\varphi_h$ by specifying sharp constant $\delta>0$ in the Gaussian seems a rather intractible problem. However, with Corollary \ref{thm2} in hand, we see that away from the characteristic variety, $T(h)\varphi_h$ is exponentially small. Consequently, instead of attempting to refine the global estimate in Theorem \ref{mainthm2}, we $h$-microlocalize to a small neighbourhood of the characteristic variety $p^{-1}(0).$  In order to exploit the real-analyticity of $(M,g)$ and $P(h)$, we choose the neighbourhood of $M$ in $T^*M$ to be the Grauert tube complexification $M_{\tau}^{\C} \supset M,$ which we assume throughout contains the characteristic variety, $p^{-1}(0).$  In addition, for the subsequent estimates, it will be important to choose a specific $h$-microlocal FBI transform $T_{hol}(h): C^{\infty}(M) \to C^{\infty}(M_{\tau}^{\C})$ that is compatible with the complex structure on $M_{\tau}^{\C}$. In view of \cite[Theorem 0.1]{GLS}, it is natural to choose $T_{hol}(h)$ to be the holomorphic continuation of the heat kernel on $(M,g)$ at time $t = \frac{h}{2}.$ 

\smallskip
\noindent{\bf{{{\red{Motivating Example of FBI transforms}}}}}\par\smallskip

\red{Recall that the standard FBI transform on $\R^n$ is given by 
\begin{equation}
\label{e:stdFBI}
T_{\R^n}u(\alpha_x,\alpha_\xi)=2^{-\frac{n}{2}}(\pi h)^{-\frac{3n}{4}}\int e^{-\frac{i}{h}\langle \alpha_x-y,\alpha_\xi\rangle-\frac{1}{2h}|\alpha_x-y|^2}u(y)dy.
\end{equation}
We will introduce two globally defined FBI transforms below, $T_{hol}$ and $T_{geo}$. In the case of $\R^n$, these two FBI transforms agree and are given by~\eqref{e:stdFBI}.
That is, on $\R^n$,
$$
T_{hol}(\alpha_x-i\alpha_\xi)=T_{geo}(\alpha_x,\alpha_\xi)=T_{\R^n}(\alpha_x,\alpha_\xi).
$$
}
Before formally stating our next result, we give some motivation. Consider the simple example of the circle $\R/ 2\pi \Z$ with a flat metric $g = dx^2.$ \red{That is, consider the boundary for the examples in Sections~\ref{s:e1} and~\ref{s:e2}.} The \red{functions} $\phi_h(x) = e^{ix/h}$ \red{appearing in~\eqref{e:e1} and~\eqref{e:e2}} satisfy $(hD_{x}-1)\phi_h=0.$ 
In this case, the complexification is 
$$\C / 2\pi \Z = \{ \alpha_x + i \alpha_\xi; \alpha_x + 2\pi \equiv \alpha_x \} \cong T^* (\R/2\pi \Z).$$ 
\red{We compute $T_{hol}\phi_h$ by extending $\phi_h$ smoothly to $\R$ as a solution of $(hD_{x}-1)\phi_h=0$ or using the definition of $T_{hol}$ in~\eqref{e:thol}. Then,}
$$T_{hol}\phi_h(\alpha_x-i\alpha_\xi)=h^{-1/4}e^{-\frac{1}{2h}\alpha_\xi^2-\frac{1}{2h}}e^{i(\alpha_x-i\alpha_\xi)/h}=h^{-1/4}e^{i\alpha_x/h}e^{-\frac{1}{2h}(\alpha_\xi-1)^2}.$$ 
In particular,  with $\psi_{hol}(\alpha) = \frac{1}{2} (\alpha_\xi - 1)^2,$
$$e^{\psi_{hol}/h}T_{hol}\phi_h=h^{-1/4}e^{i\alpha_x/h}.$$
Thus, for any $\gamma<1$,
\begin{equation} \label{torus1}
c\leq \|e^{\frac{\gamma\psi_{hol}}{h}}T_{hol}\phi_h\|_{L^2(T^*S^1)}\leq C. \end{equation}
We note that \eqref{torus1} is consistent with Theorem \ref{mainthm2}. 
However, it is useful to observe that with the precise weight function $\psi_{hol} = \frac{1}{2}(\alpha_{\xi}-1)^2$ (with $\gamma =1$),
\begin{equation} \label{torus2}
 \| e^{\frac{\psi_{hol}}{h}}T_{hol}\phi_h \|_{L^\infty(T^*S^1)}= h^{-1/4}. \end{equation}
Thus, with the optimal weight function $\psi_{hol}$, one has a polynomial gain of $h^{-1/4}$ in the weighted $L^2$ mass. The computations for eigenfunctions on higher-dimensional flat tori $\R^n/\Z^n$ are very similar.

To state the $h$-microlocal refinement of Theorem \ref{mainthm2}, we let $M$ be a compact, closed, real-analytic manifold of dimension $m$  and $\widetilde{M}$ denote a Grauert tube complex thickening of $M$ with $M$ a totally real submanifold. By Bruhat-Whitney, $\widetilde{M}$ can be identified with $M^{\C}_{\tau} := \{ (\alpha_x, \alpha_\xi) \in T^*M; \sqrt{2\rho}(\alpha_x,\alpha_\xi) \leq \tau \}$ where $\sqrt{2\rho} = |\alpha_{\xi}|_g$ is the exhaustion function using the complex geodesic exponential map $ \kappa : M_{\tau}^{\C} \rightarrow \tilde{M}$ with $\kappa(\alpha) = \exp_{\alpha_x}( i \alpha_{\xi}).$ 

\red{\begin{rem}
We use the notation $\alpha$ rather than $(x,\xi)$ because it is useful to think of $\alpha \in \widetilde{M}$ at some times and $\alpha\in T^*M$ at other times where we identify $\widetilde{M}$ as a subset of $T^*M$.
\end{rem}}

By possibly rescaling the semiclassical parameter $h$ we assume without loss of generality that the characteristic manifold
$$ p^{-1}(0) \subset M^{\C}_{\tau}.$$
Now, let $e^{h\Delta_g/2}$ have Schwartz kernel $E(x,y,h)$ and $E^{\mathbb{C}}(\alpha,y,h)$ denote the holomorphic continuation of $E(x,y,h)$ to $M^{\mathbb{C}}_\tau$ in the outgoing $x$-variables. It is proved in \cite{GLS} Theorem 0.1 (see also Section \ref{s:FBIDescription}) that the operator given by
\begin{equation} 
\label{e:thol}T_{hol}(h)u(\alpha):=h^{-{\red{n}}/4}e^{-\rho(\alpha)}\int_M E^{\mathbb{C}}(\alpha,y,h)u(y)dy
\end{equation}
is an $L^2$-normalized, $h$-microlocal FBI transform defined for $\alpha\in \red{\widetilde{M}}.$ 

\begin{theo}
\label{mainthm3}
Under the same assumptions as in Theorem \ref{mainthm2} and with $T=T_{hol}$ is as in \eqref{e:thol}, there exists $\e>0$ small enough  so that with $\gamma<1/2$,
\begin{equation}\label{holbound1}\|e^{\gamma a_0p^2/h}T_{hol}(h)\phi_h\|_{L^2(\{|p|\leq \e\})}=O(1), \quad  a_0=\frac{1}{2|dp|_{g_s}^2} \end{equation}\
where $g_s$ is the Sasaki metric on $TM$ (see for example \cite[Chapter 9]{BlairSasaki}).

Moreover, there exists $\psi_{hol} \in C^{\infty}(M_{\tau}^{\C})$ with
$$\psi_{hol}(\alpha) = p^2(\alpha)\big( \, a_0(\alpha)+O(p(\alpha)) \,\big),$$
where $a_0$ is given in \eqref{holbound1} such that  for $\e>0$ sufficiently small, 

\begin{equation}\label{holbound2}\|e^{\psi_{hol}/h}T_{hol}(h)\phi_h\|_{L^2(\{|p|\leq \e\})}=O(h^{-1/4}).\end{equation}\

\end{theo}

We note that the example of the Laplace eigenfunctions on the circle (see \eqref{torus1} and \eqref{torus2} above), shows that the upper bound in Theorem \ref{mainthm3} \eqref{holbound2} is sharp. 

%\begin{rem} Using a slightly more complicated construction to find the weight function, we can improve $\psi_{hol}$ so that it is sharp to any order $p^N$ for Laplace eigenfunctions on the circle as well as for the function $e^{ix^3/3h}$ with $P(h)=hD_x+x^2$. See Remark \ref{r1}.
%\end{rem}

\begin{rem}We use \eqref{holbound1} instead of \eqref{holbound2} in the proof of Theorem \ref{mainthm}. Using \eqref{holbound2} results in better estimates as soon as $d(x)\gg ch\log h^{-1}$, but for simplicity and since the function $d(x)$ that we obtain is still not sharp, we do not state these estimates here.
\end{rem}

\subsection{Sketch of the proof of Theorem \ref{mainthm}}

Let ${\mathcal P}: C^{\infty}(\partial \Omega) \to C^{\infty}(\Omega)$ be the Poisson operator for the boundary value problem in \eqref{dirichlet}, so that  $ u_h(x) = {\mathcal P} \phi_h(x).$ The first step in the proof of Theorem \ref{mainthm} amounts to understanding the microlocal structure of the Poisson operator, ${\mathcal P}$ following the analysis in \cite{SU}.

\subsubsection{Microlocal analysis of the Poisson operator}
In view of \cite[Section 3]{SU} and the fact that $\phi_h$ is microlocally supported away from the zero section one can write
\begin{align} \label{idea1}
u_h(x) &= {\mathcal P} \phi_h (x) =  U(h) \phi_h(x) + O(e^{-C/h}) \end{align}
where $U(h): C^{\infty}(\partial \Omega) \to C^{\infty}(\Omega)$ is a semiclassical, complex-phase $h$-Fourier integral operator supported near diagonal. In terms of Fermi coordinates $(x_{\red{n+1}},x')$ in a collar neighbourhood  $U= \{ (x',x_{\red{n+1}}); x_{\red{n+1}} \geq 0 \}$ of $\partial \Omega = \{ x_{\red{n+1}} =0 \}$, $U(h)$ has Schwartz kernel 

\begin{equation} \label{poissonkernel}
K(x,y',h) = (2\pi h)^{-n} \int_{\R^{n}} e^{ i \langle x' -y', \red{\xi'} \rangle /h} \, e^{- {\red{\Psi}}(x_{\red{n+1}},x',\red{\xi'}) /h} \, a(x,y',\red{\xi'},h) \, \chi(x'-y') \, d\red{\xi'}. \end{equation}

Here, 
$${\red{\Psi}}(x_{\red{n+1}},x',\xi')=x_{\red{n+1}}|\xi'|_{x'}+\frac{x_{\red{n+1}}^2(Q(x',\xi')+i\lan \partial_{x'}|\xi'|_{x'},\xi'\ran_{x'})}{2|\xi'|_{x'}}+O(x_{\red{n+1}}^3|\xi'|_{x'})$$
 satisfies a complex eikonal equation \eqref{eikonal} and $a(x,y',\red{\xi'},h)$ is a semiclassical analytic symbol as in \eqref{scsymbol}. \red{Note that the error is exponentially decreasing since we work in the analytic setting.}

In the Euclidean case, we note that one can derive the semiclassical Poisson formula \eqref{idea1} in an elementary fashion directly from the potential layer formulas using residue computations. For the benefit of the reader, we outline the argument here.
Let $\Omega \subset \R^{n}$ be a bounded Euclidean domain with real-analytic boundary $\partial \Omega.$
Let $G(z,z') \in {\mathcal D}'(\R^{n} \times \R^{n}) $ be the free Green's functions with $ \Delta_{z'} G(z,z') = \delta(z-z').$  From Green's formula and the DtN eigenfunction condition, one gets
\begin{align} \label{layer}
u_{h}(z) & = h^{-1} \int_{\partial \Omega} G(z,z') \phi_{h}(z') d\sigma(z')   - \int_{\partial \Omega}  N(z,z') \phi_{h}(z') d\sigma(z'). \end{align}
with $N(z,z') = \partial_{\nu(z')} G(z,z'),\,\,\, (z,z') \in \R^{{\red{n+1}}} \times \partial \Omega.$ Writing $G(z,z')$ and $N(z,z')$ as Fourier integrals and rescaling the frequency variables $\xi \to h^{-1} \xi$ one rewrites \eqref{layer} in the form
\begin{align} \label{layer}
u_h(z) & = (2\pi h)^{-{(\red{n+1})}}h \int_{\R^{{\red{n+1}}}} \int_{\partial \Omega}  e^{i \langle z -z', \xi \rangle /h } ( |\xi|^2 + i 0)^{-1} \, \phi_h(z') d\sigma(z') d\xi \nonumber \\
&   + i (2\pi h)^{-{(\red{n+1})}}h \int_{\R^{{\red{n+1}}}} \int_{\partial \Omega} e^{i \langle z-z',\xi \rangle /h} \langle \nu(z'), \xi \rangle (|\xi|^2 + i0)^{-1} \phi_h(z') d\sigma(z') d\xi. \end{align}

Let $\chi \in C^{\infty}_0(\R)$ with $\chi =1$ near the origin and supp $\chi \subset [-\epsilon_0,\epsilon_0].$
We note that by making a change a change of contour $\xi \mapsto \xi + i \delta \xi$ in \eqref{layer} with $0 < \delta <1$ one can insert a spatial cutoff $\chi(|z-z'|)$ in both integrals modulo an $O(e^{-C/h})$ error. 
Next, we introduce convenient coordinates in a tubular neighbourhood $U_{\partial \Omega}$ of the  boundary. Given a local $C^{\omega}$ parametrization of the boundary $q: U \to \partial \Omega$ with $U \subset \R^n$ open, we write locally
$$ z = q(x') + x_{{\red{n+1}}}  \nu(x')\,\,\, \text{and} \,\, z' = q(y')$$

By choosing $\epsilon_0>0$ sufficiently small, we can assume that $z$ and $z'$ lie in the same local coordinate chart.
In terms of these new coordinates, one can rewrite the phase function
$$ \langle z - z',\xi \rangle = \langle q(x') - q(y'), \xi \rangle  + x_{{\red{n+1}}} \langle \nu(x'), \xi \rangle= \langle x' - y', dq^t(x',y') \xi \rangle + x_{{\red{n+1}}} \langle \nu(x'), \xi \rangle.$$
We make the affine change of variables in \eqref{layer} given by $\xi \mapsto (\eta',\eta_{{\red{n+1}}})$ where
$$\eta_{{\red{n+1}}} = \langle \nu(x'), \xi \rangle, \,\,\, \eta' = dq^t(x',y') \xi$$

Then, for $x= (x',x_{\red{n+1}}) \in U_{\partial \Omega},$ using the fact that the DtN eigenfunctions are $h$-microlocally $O(e^{-C/h})$ near the zero section $\red{\eta}' = 0$ (see Proposition \ref{zsection}), one can write
\begin{equation} \label{layer 2}
\begin{aligned}
&u_h(x)  + O(e^{-C(\ep)/h}) = \\
&\quad(2\pi h)^{-{(\red{n+1})}}h \int_{|\eta'| > \epsilon} \int_{\partial \Omega}  e^{i  \langle x'-y', \eta' \rangle /h }  e^{i  x_{\red{n+1}} \eta_{\red{n+1}} /h} \,b(x,y',\eta) \chi(x'-y') \phi_h(y') dy' d\eta_{\red{n+1}} d\eta' 
\end{aligned}
\end{equation}
where, 
$ b(x,y',\eta) =  (1 + i \eta_{\red{n+1}}) ( \eta_{\red{n+1}}^2 + |\eta'|_x^2 + i 0)^{-1}  a(x',y').\ $ \\

For $\eta' \neq 0,$ a residue computation gives  $ \int_{\R} e^{i x_{\red{n+1}} \eta_{\red{n+1}}/h} ( \eta_{\red{n+1}}^2 + |\eta'|_{x} + i0)^{-1} d\eta_{\red{n+1}} =  \frac{  \pi e^{- x_{\red{n+1}} |\eta'|_x/h}}{ |\eta'|_x}$
and similarily,
$  \int_{\R} e^{i x_{\red{n+1}} \eta_{\red{n+1}}/h} \eta_{\red{n+1}} ( \eta_{\red{n+1}}^2 + |\eta'|_{x} + i0)^{-1} d\eta_{\red{n+1}} = \pi  e^{- x_{\red{n+1}} |\eta'|_x/h}.$ Substitution of these integral formulas in \eqref{layer 2} gives modulo $O(e^{-C(\ep)/h})$ error,
\begin{align*} 
u_{h}(x) & = (2\pi h)^{-{\red{n}}} \int_{|\eta'| >\ep} \int_{\partial \Omega}  e^{i  \langle x'-y',\eta' \rangle /h}  e^{-  x_{\red{n+1}} |\eta'|_x /h} |\eta'|_x^{-1} a(x',y')  ( 1 + i |\eta'|_{x}^{-1}) \phi_{\lambda}(y')  dy' d\eta'.
\end{align*}
This is  consistent with the general formula in \eqref{poissonkernel}.

 \subsubsection{Microlocal lift of the Poisson representation \eqref{idea1}}Given the representation of $u_h$ in \eqref{idea1} in terms of semiclassical, complex phase $h$-Fourier integral operator  $U(h): C^{\infty}(\partial \Omega) \to C^{\infty}(\Omega),$  the key  idea in the proof of Theorem \ref{mainthm} is to  lift \eqref{idea1} to the cotangent bundle of the boundary $T^*\partial \Omega$ and then apply the weighted estimate in Theorem \ref{mainthm2} to give the first order approximation for the Steklov decay rate function $d(x)$ in Theorem \ref{mainthm} \eqref{basic bound}. The quadratic term in $d(x)$ is then bounded from above to prove Theorem \ref{mainthm} \eqref{secondterm} using the refined $h$-microlocal weighted estimates in Theorem \ref{mainthm3}.

Roughly speaking, we do this as follows: Viewing $x \in \Omega$ as parameters, we consider the family of functions $K_{x,h} \in C^{\infty}(\partial \Omega)$ with 
$$K_{x,h}(y') := K(x,y',h).$$
Then, \eqref{idea1} can be written in the form
\begin{align} \label{idea2}
u_{h}(x) = \langle K_{x,h},  \overline{\phi_h} \rangle_{L^2(\partial \Omega)} + O(e^{-C/h}). \end{align}

To lift \eqref{idea2} we let $T(h): C^{\infty}(\partial \Omega) \to C^{\infty}(T^*\partial \Omega)$ be an FBI transform in the sense of Sj\"{o}strand \cite{Sj} and $S(h): C^{\infty}(T^*\partial \Omega) \to C^{\infty}(\partial \Omega)$ be a left-parametrix with
$$ S(h) T(h) = I + R(h),$$
and $R(h)$ exponentially small in the sense that
$$ | \partial_{x}^{\alpha} \partial_{y}^{\beta} R(x,y)|  = O_{\alpha,\beta} (e^{-C/h}).$$
Given the weight function $\psi \in C^{\infty}(T^*\partial \Omega)$ in Theorem \ref{mainthm2}, one  can write
\begin{align} \label{idea3}
u_{h}(x) = \langle e^{-\psi/h} S(h)^t K_{x,h}, \overline{e^{\psi/h} T(h) \phi_h} \rangle_{L^2(T^*\partial \Omega)} + O(e^{-C/h}). \end{align}\
Using the bound 

$$\| e^{\psi/h} T(h) \phi_h \|_{L^2(T^*M)} = O(1) $$ \

in Theorem \ref{mainthm2} and applying Cauchy-Schwarz in \eqref{idea3} one gets

\begin{equation} \label{IDEA}
|u_{h}(x) |  = O(1) \, \| e^{-\psi/h} S(h)^t K_{x,h} \|_{L^2(T^*\partial \Omega)} + O(e^{-C/h}). \end{equation}\

Finally, a direct analysis of the first term on the RHS of \eqref{IDEA} using the method of analytic stationary phase yields  the bounds \eqref{basic bound} and \eqref{secondterm}  in Theorem \ref{mainthm}. We refer to section \ref{steklovbound} for a detailed proof of Theorem \ref{mainthm} using the weighted bounds in Theorems \ref{mainthm2} and \ref{mainthm3}.

\begin{rem} Both Theorems \ref{mainthm2} and \ref{mainthm3} have other applications to eigenfunction bounds, including the problem of obtaining geometric rates of decay for eigenfunctions in subdomains of configuration space $M$ that correspond to classically forbidden regions that are geometrically more refined than the classical Agmon-Lithner estimates. Specific examples include (but are not limited to) joint eigenfunctions for quantum completely integrable (QCI) eigenfunctions. We hope to return to this elsewhere.
\end{rem}

\subsection{Outline of the paper}
In section \ref{s:FBIDescription} we discuss the eigenfunction mass microlocalization results for the eigenfunctions $\phi_h$. The long range exponential decay estimates are proved in Proposition \ref{p:longRange} and the short range exponential weighted estimates near the characteristic variety (and inside the Grauert tube $M_{\tau}^{\C}$) are proved in Proposition  \ref{weightedl2}. These estimates are combined to prove Theorem \ref{mainthm2} in section \ref{mainthm2proof}. The $h$-microlocally refined weighted estimates for $T_{hol}(h) \phi_h$  along with the proof of Theorem \ref{mainthm3} are taken up in section \ref{refine}. In section \ref{steklovbound}, the exponential weighted estimates in sections \ref{mainthm2proof} and \ref{refine} are used to prove the decay estimates in Theorem \ref{mainthm} for the Steklov eigenfunctions. In section \ref{zerosection}, we prove the necessary $h$-microlocal exponential decay estimates for the Steklov eigenfunctions  near the zero section of $T^*\partial \Omega.$ This is necessary since the semiclassical DtN operator $ h {\mathcal D}: C^{\infty}(\partial \Omega) \to C^{\infty}(\partial \Omega)$ fails to be an $h$-analytic pseudodifferential operator microlocally near the zero section.\\

\noindent {\sc Acknowledgemnts.} The authors would like to thank Iosif Polterovich and Steve Zelditch for their comments on an earlier version of this paper. Thanks also to Andras Vasy and Maciej Zworski for valuable suggestions. Finally, thanks to the anonymous referee for many helpful suggestions. J.G. is grateful to the National Science Foundation for support under the Mathematical Sciences Postdoctoral Research Fellowship  DMS-1502661.  The research of J.T. was partially supported by NSERC Discovery Grant \# OGP0170280 and an FRQNT Team Grant. J.T. was also supported by the French National Research Agency project Gerasic-ANR-
13-BS01-0007-0.

\section{eigenfunction mass microlocalization}
\label{s:FBIDescription}

Let $M$ be a compact, closed, real-analytic manifold of dimension $m$  and $\widetilde{M}$ denote a Grauert tube complex thickening of $M$ with $M$ a totally real submanifold. By Bruhat-Whitney, $\widetilde{M}$ can be identified with $M^{\C}_{\tau} := \{ (\alpha_x, \alpha_\xi) \in T^*M; \sqrt{\rho}(\alpha_x,\alpha_\xi) \leq \tau \}$ where $\sqrt{2\rho} = |\alpha_{\xi}|_g$ is the exhaustion function \red{$M^{\C}_{\tau}$, where we identify $\widetilde M$ with $M_{\tau}^{\C}$} using the complexified geodesic exponential map $ \kappa : M_{\tau}^{\C} \rightarrow \tilde{M}$ with $\kappa(\alpha) = \exp_{\alpha_x,\C}( i \alpha_{\xi})$
\red{ Viewed on $\widetilde{M}$, the function $\sqrt{\rho}(\alpha) = \frac{-i}{2\sqrt{2}} r_{\C}(\alpha,\bar{\alpha}),$ which satisfies homogeneous Monge-Ampere and its level sets exhaust the complex thickening $\widetilde{M}$ (see Remark \ref{kahler} and \cite{GS1} for further details). }

\smallskip

\red{ 
{\bf The example of the round sphere} To illustrate these basic complex analytic entities, we consider the case of   the $n$-dimensional round sphere 
$$M = \{(x_1,...,x_{n+1}) \in \R^{n+1};  x_1^2 + \cdots + x_{n+1}^2 = 1 \} \subset \R^{n+1}.$$
 The complexification of $M$ is the quadric
\begin{eqnarray}
\widetilde{M} &= \{(z_1,...,z_{n+1}) \in \C^{n+1};  z_1^2 + \cdots z_{n+1}^2 = 1 \} \nonumber \\
&= \{ (x,\xi) \in \R^{2(n+1)} ; |x|^2 - |\xi|^2 = 1, \,\, \langle x, \xi \rangle = 0 \}. \end{eqnarray}

The Riemannian exponential map written in terms of affine ambient coordinates  on $\R^{n+1}$ is 
$$\exp_{x}(\xi) = \cos(|\xi|) x  + (\sin |\xi|) \frac{\xi}{|\xi|},$$
where $\xi\in T_xM$, so $\xi\in \R^{n+1}$ is orthogonal to $x$. The complexification of $\exp_x(\xi)$ is then given by
$$\exp_{x,\C}( i \xi) =  ( \cosh |\xi| ) x + i (\sinh |\xi|) \frac{\xi}{|\xi|}.$$ 
The distance function $r(x,y) = 2 \sin^{-1} \Big( \frac{|x-y|}{2} \Big)$ complexifies to 
$$ r_{\C}(z,w) = 2 \sin^{-1} \Big( \frac{ \sqrt{ (z-w)^2} }{2} \Big)$$
and the associated exhaustion function on the complexification is
$$ \frac{-i}{2\sqrt{2}} r_{\C}(z,\bar{z})  =  \frac{1}{\sqrt{2}} \, \sinh^{-1}( |\Im z|), \quad z \in \tilde{M}. $$
%The associated intrinsic Kahler form on $\tilde{M}$ (see also Remark \ref{kahler}) is $\Omega_g =  \partial \bar{\partial} \sinh^{-1} (|\Im z|).$
Pulling $r_{\C}(z,\bar{z})$ back to $T^*{\mathbb S}^2$ via the complexified exponential map gives
$$ \sqrt{\rho}(z) =  \exp_{x,\C}^{-1} \Big( - \frac{i}{2\sqrt{2}} r_{\C}(z,\bar{z}) \Big) = \frac{ |\xi|}{\sqrt{2}}.$$
In terms of local coordinates $\alpha =(\alpha_x,\alpha_{\xi}) \in T^*{\mathbb S}^2,$ this just gives
$\sqrt{\rho}(\alpha) = \frac{1}{\sqrt{2}} |\alpha_{\xi}|_g.$
Of course, the multiplcative factor of $\frac{-i}{2\sqrt{2}}$ above is just a computationally convenient normalization that we choose to adopt here.}

\smallskip

By possibly rescaling the semiclassical parameter $h$ we assume without loss of generality that the characteristic manifold
$$ p^{-1}(0) \subset M^{\C}_{\tau}.$$
 We will also have to  consider a complexification of $T^*M$ of the form
 \begin{equation}
 \label{e:snake}
 \widetilde{T^*M}:= \{ \alpha; |\Im \alpha_x | < \tau, \,\, |\Im \alpha_{\xi}| \leq \frac{1}{C} \langle \alpha_\xi \rangle \}
 \end{equation}
 where $C \gg 1$ is a sufficiently large constant and  $T^*M \subset \widetilde{T^*M}$ is then a totally-real submanifold.

  We recall that a complex $m$-dimensional submanifold, $\Lambda,$ of $\widetilde{T^*M}$ is said to be {\em I-Lagrangian} if it is Lagrangian with respect to
$$ \Im \omega = \Im ( d\alpha_x \wedge d \alpha_{\xi})$$
where $\omega = d\alpha_{x} \wedge d \alpha_{\xi}$ is the complex symplectic form on $\widetilde{T^*M}$. 

Let $U\subset T^*M$ be open. Following \cite{Sj}, we say that $a \in S^{m,k}_{cla}(U)$  provided $a \sim h^{-m} (a_0 + h a_1 + \dots)$ in the sense that
\begin{equation}
\label{scsymbol} 
\begin{gathered} 
\partial_{x}^{\red{l_1}} \partial_{\xi}^{\red{l_2}} \overline{\partial}_{(x,\xi)} a = O_{\red{l_1},\red{l_2}}(1) e^{- \langle \xi \rangle/Ch}, \quad (x,\xi)\in U, \\ 
  \Big| a - h^{-m} \sum_{0 \leq j \leq \langle \xi \rangle/C_0 h} h^{j} a_j \Big| = O(1) e^{- \langle \xi \rangle/C_1 h},\quad
 |a_j| \leq C_0 C^{j} \, j ! \, \langle \xi \rangle^{k-j},\qquad (x,\xi)\in U.
 \end{gathered}
 \end{equation}
We sometimes write $S^{m,k}_{cla}=S^{m,k}_{cla}(T^*M)$. 

Following \cite{SU}, we also define the notion of a \emph{homogeneous analytic symbol of order $k$} and write $a\in S^k_{ha}$ provided that there exist holomorphic functions $a_k$ on a fixed complex conic neighborhood of $T^*M\setminus\{0\}$ homogeneous of degree $k$ in $\xi$ so that \red{there exists $C_0>0$ so that} 
\begin{equation}
\label{homsymbol1}
\left|a_{k-j}\left(x,\frac{\xi}{|\xi|}\right)\right|\leq \red{C_0}^{j+1}j^j,\qquad j\geq 0  
\end{equation}
\red{and for every $C_1>0$ large enough, there exists $C_2>0$ so that}
\begin{equation}
\label{homsymbol2}
\left|a(x,\xi)-\sum_{0\leq j\leq |\xi|/C_1}a_{k-j}(x,\xi)\right|\leq C_2e^{-|\xi|/C_2},\quad |\xi|\geq 1.
\end{equation}

We say that an operator $\red{A}(h)$ is a \emph{semiclassical analytic pseudodifferential operator of order $m,k$} if its kernel can be written as $\red{A}(x,y;h)=K_{\red{1}}(x,y;h)+R_{\red{1}}(x,y;h)$ where for all $\alpha,\beta$,
$$|\partial_x^\alpha \partial_y^\beta R_{\red{1}}(x,y\red{;h})|\leq C_{\alpha\beta} e^{-c_{\alpha\beta}/h}, \,\,\, c_{\alpha \beta} >0,$$
and
$$K_{\red{1}}(x,y;h)=\frac{1}{(2\pi h)^{\red{n}}}\int e^{\frac{i}{h}\lan x-y,\xi\ran}a(x,\xi,h)\chi(|x-y|)d\xi$$
where $\chi\in \Cc(\re)$ is 1 near 0 and $a\in S^{m,k}_{cla}$. \red{We say $A$ is $h$-elliptic if $|a_0(x,\xi)|>ch^{-m}\langle \xi\rangle^k$ where $a_0$ is from \eqref{scsymbol}. Recall also that $A$ is classically elliptic if there is $C>0$ so that if $|\xi|>C$, $|a_0(x,\xi)|>C^{-1}h^{-m}|\xi|^k$.}

We say that an operator, $\red{B}$ is a \emph{homogeneous analytic pseudodifferential operatior of order $k$} if its kernel can be written as $\red{B}(x,y)=K_{\red{2}}(x,y)+R_{\red{2}}(x,y)$ where $R_{\red{2}}(x,y)$ is real analytic and 
$$K_{\red{2}}(x,y)=\frac{1}{(2\pi)^{\red{n}}}\int e^{i\lan x-y,\xi\ran}\red{b}(x,\xi)\chi(|x-y|)d\xi$$
for some $\red{b}\in S^k_{ha}$.and $\chi\in \Cc(\re)$ is 1 near 0. \red{We say $B$ is elliptic if there exists $c>0$ so that $b_k>c|\xi|^k$ on $|\xi|\geq 1$ where $b_k$ is from~\eqref{homsymbol1} and~\eqref{homsymbol2}.} For more details on the calculus of analytic pseudodifferential operators, we refer the reader to \cite{SjA}.

 As in \cite{Sj},  given an $h$-elliptic, semiclassical analytic symbol $a \in S^{3{\red{n}}/4,{\red{n}}/4}_{cla}(M \times (0,h_0]),$  we consider an intrinsic FBI transform $T(h):C^{\infty}(M) \to C^{\infty}(T^*M)$ of the form
\begin{equation} \label{FBI}
T u(\alpha;h) = \int_{M} e^{i\phi(\alpha,y)/h}  a(\alpha,y,h)\chi( \alpha_x, y) u(y) \, dy \end{equation}
with $\alpha = (\alpha_x,\alpha_{\xi}) \in T^*M$  in the notation of \cite{Sj}. 

\red{
\begin{rem}
The normalization $a\in S^{3n/4,n/4}_{cla}$ appears so that $T$ is $L^2$ bounded with uniform bounds as $h\to 0$~\cite{Sj}. 
\end{rem}
}

 The phase function is required to satisfy $\phi(\alpha,\alpha_x) = 0, \, \partial_y \phi(\alpha,\alpha_x) = - \alpha_{\xi}$ and
$$ \Im (\partial_y^2 \phi)(\alpha,\alpha_x) \sim |\langle \alpha_{\xi} \rangle| \, \Id.$$

Given $T(h) :C^{\infty}(M) \to C^{\infty}(T^*M)$ it follows by an analytic stationary phase argument \cite{Sj} that one can construct an operator $S(h): C^{\infty}(T^*M) \to C^{\infty}(M)$ of the form
\begin{equation} \label{left}
 S v(x;h) = \int_{T^*M} e^{-i  \, \overline{\phi(x,\alpha)}  /h} b(x,\alpha,h) v(\alpha) \, d\alpha \end{equation}
with $b \in S^{3{\red{n}}/4,{\red{n}}/4}_{cla}$ such $S(h)$  is a left-parametrix for $T(h)$ in the sense that
$$S(h) T(h) = \Id + R(h),\qquad\partial_{x}^{\alpha} \partial_{y}^{\beta} R(x,y,h) = O_{\alpha, \beta}(e^{-C/h}).$$

We use  two invariantly-defined FBI transforms. The first transform 
$T_{geo}(h): C^{\infty}(M) \to C^{\infty}(T^*M)$ is defined using only the Riemannian structure of $(M,g)$ and has phase function
\begin{equation} \label{FBIphase}
\phi(\alpha,y) = i \exp_{y}^{-1}(\alpha_x) \cdot \alpha_{\xi} - \frac{1}{2} r^{2}(\alpha_{x},y) \langle \alpha_{\xi} \rangle. \end{equation}
Here, $r(\cdot,\cdot)$ is geodesic distance and 
$\chi(\alpha_x,y) = \chi_0(r(\alpha_x,y))$ where $\chi_0: \R \to [0,1]$ is an even cutoff with supp $\chi_0 \subset [-inj(M,g), inj(M,g)]$ and $\chi_0(r) =1$ when $|r| < \frac{1}{2} inj(M,g).$

The transform $T_{geo}(h)$ will be used to derive $h$-microlocal exponential decay outside the Grauert tube $M_{\tau}^{\C}$  and far from the characteristic variety $p^{-1}(0).$  We will refer to the corresponding estimates as {\em long-range}.

To estimate $h$-microlocal eigenfunction mass inside the Grauert tube $M_{\tau}^{\C}$ containing $p^{-1}(0)$ we use instead another $FBI$-transform $T_{hol}(h)$ which is defined in terms of the holomorphic continuation of the heat operator $e^{t \Delta_g}$ at time $t = h/2.$ We refer to the corresponding estimates as  {\em short-range}.

Before continuing,  we briefly recall here some background on the operator $T_{hol}(h): C^{\infty}(M) \to C^{\infty}(M_{\tau}^{\C})$ and refer the reader to \cite{GLS} for further details.

\subsubsection{Complexified heat operator on closed, compact manifolds} \label{heat}
Consider the heat operator of $(M,g)$ defined at time $h/2$ by $$E_{h}=e^{\frac{h}{2}\Delta_g}:C^{\infty}(M) \to C^{\infty}(M).$$
%We remark that although the convention is to write $t$  instead of $\h$ to refer to the time,
% we make this change of notation for we are only interested in small time asymptotics $\h\to 0^+$,
% and all our results come from semiclassical analysis arguments.

 By a result of Zelditch \cite[Section 11.1]{Z}, the maximal geometric tube radius $\tau_{\max}$ agrees with the maximal analytic tube radius in the sense that for all $ 0<\tau < \tau_{\max}$, all the eigenfunctions $\varphi_j$  extend holomorphically to $M_\tau^\C$ (see also \cite[Prop. 2.1]{GLS}). In particular, the kernel $E(\cdot,\cdot;h)$ admits a holomorphic extension to $M_\tau^\C \times M_\tau^\C$
for all $0<\tau < \tau_{\max}$  and $h \in (0,1)$, \cite[Prop. 2.4]{GLS}. We denote the complexification by $E_h^\C( \cdot, \cdot)$.
To recall asymptotics for $E^{\C}_h$ we note that
  the squared geodesic distance on $M$
$$r^2(\cdot, \cdot): M \times M \to \R$$
holomorphically continues in both variables to $M_{\tau} \times M_{\tau}$ in a straightforward fashion. 
More precisely, $0<\tau<\tau_{\max}$, there exists a connected open neighbourhood $\tilde \Delta \subset M_\tau^\C \times M_\tau^\C$ of the diagonal $\Delta \subset M \times M$ to which $r^2(\cdot, \cdot)$ can be holomorphically extended \cite[Corollary 1.24]{GLS}. We denote the \red{holomorphic} extension
by $r_\C^2(\cdot,\cdot) .$ Moreover, one can easily recover the exhaustion function $\sqrt{\rho_g}(\alpha_z)$ from $r_{\C}$; indeed, 
$\rho_g(\alpha_z)=-r^2_\C(\alpha_z, \bar{\alpha_z})$
for all $\alpha_z \in M_\tau^\C$.

To analyze the asymptotic behaviour of $E_h^{\C}(\alpha_z,y)$ with $(\alpha_z,y) \in M^\C_\tau \times M$, we split the kernel into two pieces where\\
 {\indent (i) the point $(\pi_{_M} \alpha_z,y) \in M \times M$ is close to the diagonal in terms of $inj$ and the Grauert tube radius $\tau$,} \\
 \indent (ii) the point $(\pi_{_M}\alpha_z ,y) \in M \times M$ is relatively far from the diagonal in terms of $inj$ and $\tau.$\\
  
 To control the behaviour of the complexified heat kernel for a pair of points $(\pi_{_M}  \alpha_z,y) \in M \times M$ that are relatively close or far from the diagonal, we  need  the  following result \cite{GLS}

\begin{prop}\label{kernel} There exist $0<\tau_0 \leq \tau_{\max}$  and positive constants $\beta$, $\delta_0$, $h_0$ and $C$,
 depending only on $\tau_0>0$,  such that for $0<\tau \leq \tau_0$, $0<\delta \leq \delta_0$ and
 $(\alpha_z,y)\in M_{\tau}^{\C}\times M$, the following is true: \\

(i) \, When $r(\pi_{M}  \alpha_z, y)<\delta$ and  $h \in (0,h_0],$
\begin{equation}\label{close to diag}
E_h^\C(\alpha_z,y)=e^{-\frac{r^2_\C(\alpha_z,y)}{2h}} a^\C(\alpha_z,y; h) + O (e^{-\beta/h}).
\end{equation}
Here, $a^\C(\alpha_z,y;h)$ is the polyhomogeneous sum 
\begin{equation} \label{phg}
a^{\C}(\alpha_z,y; h):=(2\pi h)^{-{\red{n}}/2} \sum_{0\leq k\leq D/h} a^\C_k(\alpha_z,y)h^k,
\end{equation}
where the $a_k^\C$'s denote the analytic continuation of the coefficients appearing in the formal solution
of the heat equation on $(M,g)$
 \medskip

(ii) \, \red{There exists $C>0$ so that} when $r(\pi_{M}  \alpha_z, y)>\frac{\delta}{2}$ and $h \in (0,1),$
\begin{equation}\label{away from diag}
\left| E_h^\C(\alpha_z,y) \right| \leq C \;e^{- \frac{\delta^2}{\red{C} h}},
\end{equation}
where $C$ is a positive constant depending only on $(M,g)$.
\end{prop}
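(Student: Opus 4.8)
The plan is to prove Proposition~\ref{kernel} by combining the classical parametrix construction for the heat kernel with control of its holomorphic continuation, treating the near-diagonal and far-from-diagonal regimes separately.

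\textbf{Near-diagonal regime (i).} First I would recall the standard Minakshisundaram--Pleijel parametrix: on a neighborhood of the diagonal where $r(x,y)<\delta$, the heat kernel of $e^{t\Delta_g}$ has the form $(2\pi t)^{-n/2}e^{-r^2(x,y)/2t}\sum_{k=0}^{N}a_k(x,y)t^k + E_N$, where the $a_k$ solve the usual transport equations and are real-analytic in both variables (this uses $C^\omega$-ness of $(M,g)$), and the error $E_N$ is controlled by the parabolic estimates. Working with $t=h/2$, the point is to (a) holomorphically continue the $a_k(x,y)$ and $r^2(x,y)$ into the Grauert tube --- the holomorphic continuation of $r^2$ to $\tilde\Delta\subset M_\tau^\C\times M_\tau^\C$ is cited from \cite[Corollary 1.24]{GLS} --- and (b) obtain bounds on $|a_k^\C|$ of the form $|a_k^\C(\alpha_z,y)|\leq C_0C^k k!$ uniformly for $(\alpha_z,y)$ in the relevant neighborhood, which is exactly the Cauchy-estimate bound one gets for Taylor coefficients of analytic functions on a fixed complex neighborhood. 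Then, choosing the truncation $N=\lfloor D/h\rfloor$ as in \eqref{phg}, the tail $\sum_{k>D/h}a_k^\C h^k$ is bounded by $\sum_{k>D/h}C_0C^k k!h^k$, which by Stirling is $O(e^{-\beta/h})$ provided $D$ is chosen small enough relative to $C$; the parametrix remainder $E_N$ contributes a comparable exponentially small error once $N\sim D/h$. One also has to verify that $\Re r_\C^2(\alpha_z,y)$ stays bounded below (indeed $\Re r_\C^2 \geq r^2(\pi_M\alpha_z,y)-C\tau^2 \geq -C\tau^2$) so that $e^{-r_\C^2(\alpha_z,y)/2h}$ does not itself blow up faster than a controlled exponential; this forces the smallness constraints on $\tau_0$ and $\delta_0$.

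\textbf{Far-from-diagonal regime (ii).} Here I would use the spectral expansion $E_h^\C(\alpha_z,y)=\sum_j e^{-h\lambda_j^2/2}\varphi_j^\C(\alpha_z)\overline{\varphi_j(y)}$ together with the known bound on holomorphically continued eigenfunctions: by Zelditch's result \cite[Section 11.1]{Z} (also \cite[Prop.~2.1]{GLS}) one has $|\varphi_j^\C(\alpha_z)|\leq C\lambda_j^{C}e^{\lambda_j\sqrt{2\rho(\alpha_z)}}=C\lambda_j^C e^{\lambda_j|\Im|}$-type growth controlled by the tube radius $\tau$, and on $M$ one has the Hörmander bound $\|\varphi_j\|_{L^\infty}\leq C\lambda_j^{(n-1)/2}$. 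Feeding these into the spectral sum, the summand is bounded by $C\lambda_j^{C'}e^{-h\lambda_j^2/2+\tau\lambda_j}$; completing the square gives $e^{-h\lambda_j^2/2+\tau\lambda_j}\leq e^{\tau^2/(2h)}e^{-h(\lambda_j-\tau/h)^2/2}$, and summing over $j$ using Weyl's law produces $O(e^{\tau^2/(2h)})$, which is \emph{not} yet the claimed $O(e^{-\delta^2/Ch})$. The fix, and the actual content of (ii), is that when the spatial points are separated by at least $\delta/2$ one should instead run the \emph{finite-propagation-speed / stationary-phase} argument on the wave representation $e^{t\Delta_g}=c\int e^{-s^2/2t}\cos(s\sqrt{-\Delta_g})\,ds/\sqrt t$: the wave kernel $\cos(s\sqrt{-\Delta})(x,y)$ is supported in $\{|s|\geq r(x,y)\}$, holomorphic continuation in $x$ costs at most $e^{C\tau|\xi|}$ on the frequency side, and the Gaussian weight $e^{-s^2/2t}$ restricted to $|s|\geq \delta/2$ contributes the decisive factor $e^{-\delta^2/(8t)}=e^{-\delta^2/(4h)}$; the extra $\tau$-dependent growth is absorbed by taking $\tau\leq \tau_0$ small enough relative to $\delta$. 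This gives \eqref{away from diag}.

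\textbf{Main obstacle.} The genuinely delicate step is (ii): one must get \emph{genuine} exponential decay $e^{-\delta^2/Ch}$ for the complexified kernel away from the diagonal, rather than the a priori exponential \emph{growth} $e^{\tau^2/2h}$ that the naive spectral bound gives. This requires carefully exploiting the support properties of the wave kernel (finite propagation speed) after holomorphic continuation, and tracking how the Grauert-tube radius $\tau$ must be chosen small compared to the separation $\delta$ so that the Gaussian gain beats the holomorphic-continuation loss; this is precisely where the ordering "$0<\tau\leq\tau_0$, then $0<\delta\leq\delta_0$" and the dependence of all constants only on $\tau_0$ enter. The near-diagonal part (i) is essentially bookkeeping: analytic continuation of a known parametrix plus Cauchy estimates plus Stirling to optimize the truncation, all of which is standard in the Sjöstrand-style analytic microlocal calculus and is carried out in \cite{GLS}.
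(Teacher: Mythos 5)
The paper gives no proof of Proposition~\ref{kernel}: it is quoted directly from \cite{GLS} (their Theorem~0.1 and Propositions~2.1--2.4), so there is no in-paper argument to compare yours against. Judged on its own terms, your sketch of part~(i) is the standard route and essentially the one carried out in \cite{GLS}: analytically continue the Minakshisundaram--Pleijel parametrix coefficients and $r^2$ into the tube, get factorial bounds $|a_k^\C|\leq C_0C^kk!$ from Cauchy estimates on a fixed complex neighbourhood, truncate at $k\sim D/h$ and use Stirling, and keep $\Re r_\C^2$ under control by shrinking $\tau_0$. That part is fine modulo bookkeeping.

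For part~(ii) you have correctly identified the genuine obstacle --- the naive spectral sum only gives growth $e^{+\tau^2/2h}$ --- but your proposed repair is left at the level of a slogan. The kernel $\cos(s\sqrt{-\Delta_g})(x,y)$ is a distribution singular on the light cone $|s|=r(x,y)$, so ``holomorphically continuing it in $x$ at a cost of $e^{C\tau|\xi|}$ on the frequency side'' is not a pointwise operation; to make it rigorous you would need to run the argument through an FBI/analytic-wavefront formulation, or instead (closer to \cite{GLS}) compare the exact kernel to the analytic parametrix and exploit the semigroup identity $E_h=E_{h/2}\circ E_{h/2}$ together with the classical real Gaussian off-diagonal bounds, splitting the intermediate integration variable according to its distance from $\pi_M\alpha_z$. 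Separately, your observation that the conclusion $e^{-\delta^2/Ch}$ forces $\tau\lesssim\delta$ is correct and in fact sharper than the statement as written: in the overlap region $\delta/2<r(\pi_M\alpha_z,y)<\delta$ one has $\Re r_\C^2\approx r^2(\pi_M\alpha_z,y)-|\alpha_\xi|^2$, which is negative if $\tau>\delta/2$, so the quantifiers ``$\tau\leq\tau_0$, then $\delta\leq\delta_0(\tau_0)$'' must be read with that compatibility constraint in mind. In short: (i) is a correct reconstruction; (ii) names the right difficulty but does not yet close it.
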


From now on, we always carry out our analysis in the complex Grauert tubes $M_\tau^\C$ with $0<\tau \leq \tau_{\red{0}},$ where in view of Proposition \ref{kernel}, we have good control of the complexified heat kernel, $E^\C_h(\cdot,y)$ for $y \in M$.

For $(\alpha_z,y) \in M_{\tau}^{\C} \times M$ with $r(\Re z,y) <\epsilon$ with $\epsilon>0$ small, one can show that the function $y \mapsto -  \Re r^2_{\C}(\alpha_z,y)$ attains a non-degenerate maximum at $y = \Re z$. The corresponding strictly plurisubharmonic weight  is the square of the exhaustion function given by
$$ 2\rho(\alpha) =  \red{-} \Re r^2_{\C}(\alpha_z, \Re z) = \red{-} \frac{1}{4} r_{\C}^2(\alpha_z, \bar{\alpha_z}) =  |\alpha_{\xi}|_{\alpha_x}^2$$
where,
$$\alpha_z = \exp_{\alpha_x} ( -i \alpha_{\xi}).$$

Using this observation and the expansion in Proposition \ref{kernel} it is proved in \cite [Theorem 0.1]{GLS}  that the operator $T_{hol}(h): C^{\infty}(M) \to C^{\infty}(M_{\tau}^{\C})$ given by
\begin{equation} \label{holFBI}
T_{hol} \phi_h (\alpha) = h^{-{\red{n}}/4} \int_{M} e^{ [ - r_{\C}^2(\alpha_z,y)/2 - \rho(\alpha_z)]/h} a^{\C}(\alpha_z,y,h) \chi(\alpha_x,y) \phi_h(y) dy, \quad \alpha \in M_{\tau}^{\C} \end{equation}\
is also an FBI transform in the sense of \eqref{FBI} with amplitude $a \in S^{m/2,0}_{cla}$ and phase function
\begin{align} \label{hol phase}
\phi(\alpha,y) &= i  \Big( \, \frac{ r_{\C}^2(\alpha_z,y)}{2} + \rho(\alpha) \, \Big). 
\end{align}\
In \eqref{holFBI} the multiplicative factor $h^{-{\red{n}}/4}$ is added to ensure $L^2$-normalization so that  \newline $\| T_{hol} \phi_h \|_{L^2(M_{\tau}^{\C})} \approx  1.$
The fact that the transform $T_{hol}$ is compatible with the complex structure of the Grauert tube $M_{\tau}^{\C}$ will be used a crucial way in the proof of the $h$-microlocal, shortrange weighted $L^2$ bounds in Proposition \ref{weightedl2}.

%In terms of the complex manifold, $\tilde{X}$ there is a natural strictly plurisubharmonic exhaustion function given by 
%$$ \Phi(z) = \frac{ |\alpha_\xi|^2_{g}}{2}, \,\, \, z = \exp_{\alpha_x}(i \alpha_{\xi}).$$
%Fix $\delta >0$ a small constant. Consider the manifold 

%\begin{equation} \label{ILag}
%\Lambda(\delta):= \{ \beta \in \widetilde{T^*X}; \,\, \beta = \exp (i \delta) (\alpha), \,\, \alpha \in T^*X, \,\, |\alpha_{\xi}|_x < \delta \}. \end{equation}

%\bigskip

%Since the complex symplectic form $\omega$ is invariant under $\exp (i \delta H_p)$ it follows that for $\delta >0$ small, $\Lambda(\delta)$ is both I-Lagrangian and R-symplectic. It is a small complex perturbation of the totally-real R-symplectic submanifold $T^*X \subset \widetilde{T^*X}.$

%In the folllowing, we consider an analytic, elliptic, self-adjoint $h$-pseudodifferential operator $\Lambda(h): C^{\infty}(M) \to C^{\infty}(M)$ and let
%$$p(\alpha_x,\alpha_{\xi}) := ( \gamma(\alpha)  - 1 )^2, \,\,\, \gamma(\alpha) = \sigma( \Lambda(h)) (\alpha).$$

%We assume throughout that $\gamma$ has simple-characteristics with

%$$ d \gamma(\alpha) |_{\gamma^{-1}(0)}  \neq 0.$$\ 

Since $P(h)$ has simple characteristics and is classically elliptic  $ p^{-1}(0)$  is a compact, real-analytic hypersurface and by assumption, $p^{-1}(0) \subset M_{\tau}^{\C}.$

%With some abuse of notation, denote the holomorphic continuation to $ \widetilde{T^*M}  |_{\tilde{M}}$ also by $p.$
%The FBI transform extend to an operator $T_{\Lambda}(h): C^{\infty}(X) \rightarrow C^{\infty}(\Lambda)$

\subsubsection{Long-range estimates} \label{longrange}

Let $p(\alpha, h)\sim \sum_{j=0}^\infty p_j(\alpha)h^j\in S^{m}$ be the full symbol of $P(h)$ and assume that it lies in $S^{0,m}_{cla}(W)$ where $W$ is a neighborhood of $(x_0,\xi_0)$. Here, $\xi_0$ is allowed to be a point at infinity in which case a neighborhood means a conic neighborhood of $\xi_0$ near infinity. We say $p$ is elliptic at $(x_0,\xi_0)$ if $|p_0|\geq c\lan \xi\ran^m>0$ in a neighborhood of $(x_0,\xi_0)$. 
\begin{prop}
\label{p:longRange}
Suppose that $P(h)$ is a \red{semiclassical pseudodifferential} operator analytic in a neighborhood of $(x_0,\xi_0)$ and elliptic at $(x_0,\xi_0)$. Suppose that 
$$P(h)\varphi_h=O{}(e^{-c/h}),\quad \|\varphi_h\|_{L^2}=1.$$
Then, for any FBI transform $T(h)$, there exists $c>0$ and $W$ a neighborhood of $(x_0,\xi_0)$ so that 
$$\|T(h)\varphi_h\|_{L^2(W)}=O(e^{-c/h}).$$
\end{prop}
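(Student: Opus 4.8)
\textbf{Proof strategy for Proposition \ref{p:longRange}.}

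The plan is to construct a microlocal elliptic parametrix for $P(h)$ near $(x_0,\xi_0)$ in the analytic category and then conjugate by the FBI transform. First I would fix a small neighborhood $W'\Subset W$ of $(x_0,\xi_0)$ on which the principal symbol $p_0$ is bounded away from zero, $|p_0(\alpha)|\geq c\langle\xi\rangle^m$. In the analytic pseudodifferential calculus (see \cite{SjA}), one can then build an analytic semiclassical pseudodifferential operator $Q(h)$ whose full symbol is an analytic realization of the formal Neumann series $q\sim p^{-1}(1+\dots)$, microlocally supported in $W'$, such that
$$Q(h)P(h)=\chi_{W'}(h)+R(h),$$
where $\chi_{W'}(h)$ is an analytic pseudodifferential operator that is microlocally the identity near $(x_0,\xi_0)$ (equal to $1$ on some smaller $W''$) and $R(h)$ has kernel with all derivatives $O(e^{-c/h})$. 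This is precisely the point where real-analyticity is used: the factorial growth of the symbol coefficients $|q_j|\leq C_0 C^j j!\langle\xi\rangle^{-m-j}$ is reabsorbed by summing only up to $j\lesssim \langle\xi\rangle/C_0 h$, producing an exponentially small rather than $O(h^\infty)$ error. Applying this to $\varphi_h$ and using the hypothesis $P(h)\varphi_h=O_{L^2}(e^{-c/h})$ together with the $L^2$-boundedness of $Q(h)$, I get $\chi_{W''}(h)\varphi_h=O_{L^2}(e^{-c/h})$, i.e. $\varphi_h$ is microlocally exponentially small near $(x_0,\xi_0)$.

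Next I would transfer this statement to the FBI side. Given any FBI transform $T(h)$ as in \eqref{FBI} with left parametrix $S(h)$ as in \eqref{left} satisfying $S(h)T(h)=\Id+R(h)$ with $R(h)$ exponentially small, one has $T(h)\varphi_h = T(h)\chi_{W''}(h)\varphi_h + T(h)(\Id-\chi_{W''}(h))\varphi_h$. The first term is $O_{L^2}(e^{-c/h})$ since $T(h)$ is $L^2$-bounded uniformly in $h$ and $\chi_{W''}(h)\varphi_h$ is exponentially small. For the second term one uses the standard non-stationary phase / almost-analytic extension argument for FBI transforms: $T(h)$ composed with an analytic pseudodifferential operator microlocally vanishing near $(x_0,\xi_0)$ has, when evaluated at $\alpha$ in a sufficiently small neighborhood $W$ of $(x_0,\xi_0)$, a kernel whose phase has no real critical point, so deforming the contour of integration into the complex domain (exploiting that the amplitude and phase are analytic) yields an $O(e^{-c/h})$ bound on $\|T(h)(\Id-\chi_{W''}(h))\varphi_h\|_{L^2(W)}$. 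Shrinking $W$ as needed, this gives $\|T(h)\varphi_h\|_{L^2(W)}=O(e^{-c/h})$.

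The main obstacle is carrying out the analytic parametrix construction with exponentially controlled remainders — that is, verifying that the formal symbol $q\sim\sum q_j h^j$ obtained by inverting $p$ actually lies in the class $S^{0,-m}_{cla}$ with the factorial bounds \eqref{scsymbol}, and that truncating the series optimally produces a genuine operator with $O(e^{-c/h})$ error. This is a well-developed piece of the theory of analytic pseudodifferential operators (\cite{SjA,Sj}), and the ellipticity of $p_0$ at $(x_0,\xi_0)$ is exactly what makes the recursion $q_0=p_0^{-1}$, $q_N = -p_0^{-1}\sum_{j<N}q_j\,(\text{terms from }p)$ well-posed; one then tracks constants to confirm the $j!$ growth is preserved. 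The contour-deformation step for the FBI transform is comparatively routine once the analytic-microlocal vanishing of $(\Id-\chi_{W''}(h))\varphi_h$ is established, and the role of the neighborhood-at-infinity case for $\xi_0$ is handled by working with conic neighborhoods and the homogeneous version of the calculus, using classical ellipticity $|p_0|\geq C^{-1}h^{-m}|\xi|^m$ for $|\xi|>C$.
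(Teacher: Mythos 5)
Your route is viable and is essentially the classical ``analytic elliptic regularity implies no analytic wave front set'' argument, but it is genuinely different from what the paper does. The paper never constructs a physical-side parametrix $Q(h)$ for $P(h)$. Instead it conjugates $P$ through the FBI transform directly: analytic stationary phase gives $\chi_1(\alpha)T(h)P(h)\phi_h=\chi_1(\alpha)T_b(h)\phi_h+O(e^{-C\langle\alpha_\xi\rangle/h})$, where $T_b$ is another FBI-type transform whose amplitude has leading term $r_0(\alpha,y)p_0(y,-d_y\phi)$ and is therefore \emph{elliptic} near $(x_0,\xi_0)$ by the ellipticity of $p_0$ and the relation $-d_y\phi=\alpha_\xi(1+O(|\alpha_x-y|))$. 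The eigenfunction equation then makes $\chi_1T_b\phi_h$ exponentially small, and Sj\"ostrand's change-of-FBI-transform argument (\cite[Proposition 6.2]{Sj}) expresses $\chi_2T_{geo}\phi_h$ as $K\chi_1T_b\phi_h$ plus an exponentially small remainder, with $K$ tempered; this yields the conclusion for $T_{geo}$ and hence for any FBI transform. The advantage of this route is that all cutoffs are smooth functions $\chi_1(\alpha)$ multiplied \emph{after} applying $T$, i.e. they live on phase space, where compact support causes no conflict with analyticity.

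That is exactly the soft spot in your version: the operator $\chi_{W'}(h)$ that is ``analytic and microlocally the identity near $(x_0,\xi_0)$'' cannot be the quantization of a compactly supported symbol, since nonzero analytic symbols do not have compact support. If you take $\chi_{W'}(h)$ to be a smooth ($C^\infty_0$) quantization, the remainder in $Q(h)P(h)=\chi_{W'}(h)+R(h)$ and the kernel of $T(h)(\Id-\chi_{W'}(h))$ on $W$ are only $O(h^\infty)$, which is precisely the loss the paper warns against and which would destroy the exponential conclusion. The standard fix is to realize the microlocalizer as an anti-Wick/Toeplitz operator $S(h)\chi T(h)$ or via Sj\"ostrand's truncated formal sums $\sum_{j\le\langle\xi\rangle/C_0h}$ --- at which point your argument effectively migrates to the FBI side and becomes the paper's proof. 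So the parametrix idea works, but the microlocal cutoff, not the Neumann-series symbol estimates, is where the real work lies, and your sketch treats it as unproblematic.
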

\begin{proof}
Let $\chi_1\in C^{\infty}_0(T^*M)$ so that $\chi_1\equiv 1$ near $(x_0,\xi_0)$ and $p$ is elliptic and analytic on $\supp \chi_1$. \red{Let $T(h)$ be an FBI transform with symbol $r\in S_{cla}^{3n/4,n/4}$ and phase function $\varphi$.} An application of analytic stationary phase \cite{Sj}  gives
 \begin{equation} \label{sp1}
 \chi_1(\alpha)T(h) P(h)  \phi_h = \chi_1(\alpha)T_{b}(h) \phi_h + O(e^{-C \langle \alpha_{\xi} \rangle / h}) \end{equation}
 where
 $$ T_{b}(h) \phi_h (\alpha) = \int_M e^{i\phi(\alpha,y)/h} b(\alpha,y,h) \chi (|\alpha_x-y|) \phi_h(y) dy,$$
 with 
 \begin{equation}
 \label{e:persimmon}
 b(\alpha,y,h) = \sum_{j=0}^{\red{C}h^{-1}} \tilde{p}_{j}(y, \red{\alpha},-d_y \phi(\alpha,y) ) h^j, \hspace{5mm} \tilde{p}_{j} \in S^{3{\red{n}}/4-j, {\red{n}}/4+\red{m}-j}_{cla}
 \end{equation}
 $$ \tilde{p}_0\red{(y,\alpha,\eta)} = \red{r_0(\alpha,y)p_0(y,\eta)}.$$
 \red{Here, $r_0$ is the principal symbol of $r$ from~\eqref{scsymbol}.}
Since
$$ - d_{y} \phi(\alpha,y) = \alpha_{\xi} \, ( 1 + O(|\alpha_x-y|) )$$
and $|\alpha_x -y| < \delta \ll 1$ on supp $\chi(\alpha_x-y)$ it follows that
$p_0(y,-d_y \phi)$ is $h$-elliptic near $\alpha=(x_0,\xi_0)$. In particular, 
\begin{equation}\label{comparable}
 \frac{1}{C} \langle \alpha_{\xi} \rangle^m \leq  | p_0(y,-d_y \phi(\alpha,y)) | \leq C \langle \alpha_{\xi} \rangle^m,\qquad \alpha\text{ near }(x_0,\xi_0). \end{equation}\
%
%Let $\psi \in S^{0}(1)$ be a weight function with $\| \psi \|_{C^0(T^*M)}$ sufficiently small but independent of $h$. 
%
%Let $\chi_{in} \in C^{\infty}_{0}(M_{\tau}^{\C})$  with $\chi_{in}(\alpha) =1$ for $\alpha \in M_{\tau/2}^{\C}$ and $\chi_{out} = 1- \chi_{in}.$
%

Then, from \eqref{sp1}  and the eigenfunction equation $P(h) \phi_h =O(e^{-c/h}),$ it follows that
\begin{equation} \label{lr1}
 \| \chi_1 T_{b}(h) \phi_h (\alpha)\|_{L^2} = O(e^{-C \langle \alpha_{\xi} \rangle / h}). \end{equation} \
 We claim that \eqref{lr1} is independent of FBI transform; in particular,
 \begin{equation} \label{lr2}
  \| \chi_1 T_{geo}(h) \phi_h \|_{L^2} = O(e^{-C/h}). \end{equation} \

%Since $p(\alpha)$ is $h$-elliptic on supp $\chi_{out},$ one can construct an analytic $h$-psdo $R(h) \in Op_{h}(S^{0}_{cla}(T^*M))$ with $r \sim \sum_j r_j,  r_{j} \in S^{-j}_{cla}$ so that 
Since $\chi_1(\alpha)b(\alpha,y,h)$ is $h$-elliptic near $(x_0,\xi_0,x_0)$, \cite[Proposition 6.2]{Sj} proves the estimate. We review the proof here for the reader's convenience. The operator given by 
$$Au(x)=h^{-3n/2}\iint\exp\left[\frac{i}{h}\left(\varphi(\alpha,x)-\overline{\varphi({\alpha},y)}\right)\right]b(\alpha,y)\chi_1(\alpha)\chi(|\alpha_x-y|)\chi(|\alpha_x-x|)u(y)dyd\alpha$$
is an $h$-pseudodifferential operator with elliptic symbol near $(x_0,\xi_0)$. So, for any $a(\alpha,x)$, supported near $(x_0,\xi_0,x_0)$, we can find a classical analytic symbol, $\tilde{b}$ defined near $(x_0,\xi_0,x_0)$ so that 
$$A(\tilde{b}(\alpha,\cdot)e^{i\varphi/h})=a(\alpha,x)e^{i\varphi/h}$$
modulo exponential errors.
In particular, for $W$ a small enough neighborhood of $(x_0,\xi_0)$,
\begin{align*}
a(\alpha,x)e^{i\varphi/h}=\int_{\beta\in W} b(\beta, x)e^{i\varphi(\beta,x)/h} K(\alpha,\beta)d\beta
\end{align*}
where 
$$K(\alpha,\beta)=h^{-3n/2}\iint\exp\left[\frac{i}{h}\left(\varphi(\alpha,y)-\overline{\varphi(\beta,y)}\right)\right]\tilde{b}(\alpha,y)dy.$$
By an application of analytic stationary phase,
$$ K(\alpha,\beta) = e^{i \Phi(\alpha,\beta)/h} c(\alpha,\beta;h) \chi(\alpha_x- \beta_x)  \chi( |\alpha_{\xi}|^{-1} |\alpha_{\xi} - \beta_\xi|)  + O_N( e^{-C/h} ) \max \{ |\alpha_{\xi}|,|\beta_{\xi}| \}^{-N},$$ 
where, $ \Im \Phi(\alpha, \beta) \geq  |\alpha - \beta|^2,$ and $c \in S^{\red{n},\red{\infty}}_{cla}.$

In particular, we can write 
$$\chi_2(\alpha)T_{geo}= K \chi_1(\alpha)T_b+R(h)$$\
where $K$ is tempered in $h$,
$$ |\partial_{x}^{\alpha} \partial_{y}^{\beta} R(x,y,h)| = O_{\alpha,\beta}(e^{-C/h}), \,\, C>0,$$\
and \red{$\chi_1\equiv 1$ on $\supp \chi_2$. Henceforward, we write $\chi_2 \Subset \chi_1$ to denote this.}
Thus,
$$ \chi_2T_{geo} \phi_h =    K\chi_1 T_b  \phi_h + O(e^{-C/h}) =O_{}(e^{-C/h}).$$\

\end{proof}

\subsubsection{Short-range estimates}

%In the following we let $U_j; j=1,...,N$ be a open covering of $X$ by coordinate charts and let $\chi_{j}; j=1,,,.N$ be a partition of unity subordinate to this covering.

Let $\chi_{in} \in C^{\infty}_{0}(M_{\tau}^{\C};[0,1])$ and  $\tilde{\chi}_{in} \in C^{\infty}_{0} (M_{\tau}^{\C}; [0,1])$ be a cutoff with $\tilde{\chi}_{in} \Supset \chi_{in}$. 

To deal with the shortrange case, using analytic stationary phase one constructs an $h$-pseudodifferential  intertwining operator $Q(h) \in Op_h(S^{0,\infty}(T^*M))$ that is $h$-microlocally analytic on the Grauert tube $M_{\tau}^{\C} \subset T^*M$ and satisfies
 \begin{equation} \label{intertwining}
 \chi_{in} T_{hol}(h) P(h) \phi_h  = \chi_{in} Q(h) T_{hol}(h) \phi_h + O(h^{\infty}) \| \tilde{\chi}_{in} T_{hol}(h) \phi_h \|_{L^2} + O(e^{-C/h}).\end{equation}\
 %
% Moreover, in the second error term on the RHS of \eqref{intertwining}) the $O(h^{\infty})$ can be improved $O(e^{-C'/h})$ with some $C'>0$, but we will not need this in what follows.
 %
 
 To construct $Q(h)$ in \eqref{intertwining}, using  \eqref{sp1} we write 
 $$T_{hol}(h) P(h) \phi_h = T_{b}(h) \phi_h + O(e^{-C/h})$$ 
 where $b \in S^{3{\red{n}}/4,{\red{n}}/4+\red{m}}_{cla}(T^*M)$ \red{is given by~\eqref{e:persimmon}.} Then, the symbol $q(\alpha,\alpha^*;h) \sim_{h \to 0^+} \sum_{j=0}^{\infty} q_j(\alpha, \alpha^*) h^j$ of $Q(h)$ is determined by solving the equations
 \begin{align} \label{intertwine}
 q_j(\alpha, d_{\alpha} \phi(\alpha,y)) &= \tilde{p}_j (y,\red{\alpha}, -d_y \phi(\alpha,y));        \,\, j=0,1,2,3,... \nonumber \\
 \tilde{p}_0(y,\red{\alpha},\eta) &= \red{r_0(y,\alpha)}p_0(y,\eta) \end{align}\
 \red{where $\alpha^*$ is the dual coordinate to $\alpha$ and $\tilde{p}_j$ are determined as in~\eqref{e:persimmon}.}
To solve for the $q_j$  in \eqref{intertwine}, we first consider the complexified equations
\begin{equation} \label{intertwine complex}
 q_j^{\C}(\alpha, d_{\alpha} \phi^{\C}(\alpha,y)) = \tilde{p}_j^{\C}(y, -d_y \phi^{\C}(\alpha,y));        \,\, j=0,1,2,3,... \end{equation}
\red{Let $\widetilde{M_{\tau}^{\C}}$ be a complex extension of $M_{\tau}^{\C}$.} Here, $^\C$ denotes holomorphic continuation to $(\alpha,y) \in \widetilde{ M_{\tau}^{\C} } \times M_{\tau}^{\C}$ with  $|\alpha_x - y| < \ep_0$ 
\red{ and $M_{\tau}^{\C} = \{ \alpha; \sqrt{\rho}(\alpha) \leq \tau \}$ is identified with the complex thickening $\tilde{M}.$}
 Since
 \begin{align} \label{phase asymptotics} 
 \partial_{\alpha_x} \phi^{\C} &= - \partial_y  \phi^{\C} + O(|\alpha_x -y|) = \alpha_{\xi} + O(|\alpha_x-y|), \nonumber \\
 \partial_{\alpha_{\xi}} \phi^{\C} &= \alpha_{x}-y + O(|\alpha_x-y|^2), \end{align}
  it follows that near $\alpha_x=y$,
$\det \partial^2_{y \, \alpha_\xi}\varphi\neq 0$ and so by the holomorphic implicit function theorem, $d_{\alpha_{\xi}} \varphi(\alpha,y)=w$ defines $y=\beta_x^{\C}(\alpha,w)$ with $\beta_x^{\C}$ holomorphic in a neighborhood of $\alpha_x=y.$  Hence, restricting to  real points $(\alpha,y) \in M_{\tau}^{\C} \times M$ with $|\alpha_x -y| < \ep_0,$ we can write 
\begin{equation} \label{intertwinesymbols}
q_j(\alpha,\alpha^*)=\tilde{p}_j(\beta_x(\alpha,\alpha_{\xi}^*),-d_y\varphi(\alpha,\beta_x(\alpha,\alpha_\xi^*)))=\tilde{p}_j(\beta_x(\alpha,\alpha_\xi^*),\beta_\xi(\alpha,\alpha_{\xi}^*)); j=0,1,2,..., \end{equation} \
where $\beta_x$ and $\beta_{\xi}$ are locally $C^{\omega}.$ 

%The fact that the $q_j$'s are analytic symbols satisfying Cauchy estimates $|q_j| \leq C_0 C^j  j\!$ follows from \eqref{intertwinesymbols} since the $\tilde{p}_j$'s are analytic.

Since $\tilde{p}_0 = p_0,$ for the principal symbols one gets
\begin{equation} \label{toeplitzmultiplier}
q_0(\alpha,\alpha^*) = p_0(\beta_x(\alpha,\alpha_\xi^*),\beta_\xi(\alpha,\alpha_{\xi}^*)). \end{equation} \
It will be useful to introduce the principal symbol of conjugated operator $e^{\psi/h} Q(h) e^{-\psi/h}$ given by
\begin{equation} \label{conjugmultiplier}
q_0^{\psi}(\alpha,\alpha^*) := q_0(\alpha, \alpha^* + i d_{\alpha} \psi(\alpha)). \end{equation}\

Now, $\partial^2_{y, \alpha_\xi} \varphi|_{y=\alpha_x}=-\Id$, and since $\partial_y^2r(\alpha,y)|_{y=\alpha_x}=2 \Id$, $\partial^2_y\varphi|_{y=\alpha_x}=i\Id$. Therefore, using also that $$d_{\alpha_\xi}\varphi(\alpha,y)=\alpha_x-y+O_{}(|\alpha_x-y|^2),$$
It follows from \eqref{phase asymptotics} that,
\begin{equation} \label{twist}
\beta_{x}(\alpha,\alpha_{\xi}^*) = \alpha_x-\alpha_\xi^*+O(|\alpha_\xi^*|^2), \quad \beta_{\xi}(\alpha, \alpha_{\xi}^*) = \alpha_{\xi}+i\alpha_{\xi}^*+O(|\alpha_\xi^*|^2). \end{equation} 
for $(\alpha,y) \in M_{\tau}^{\C} \times M$ with $r(\alpha_x,y) < \ep_0.$
%Consequently, by the holomorphic implicit function theorem, for $\ep_0>0$ small,
%
%
%
%
%Thus, the  holomorphic mapping $\Omega_{1}^{\C} \to \Omega_{2}^{\C}$ induced by the projection $(\alpha,y) \mapsto (\alpha_\xi,y)$ is a complex
%submersion for $\ep_0>0$ small. So by the holomorphic implicit function theorem one can solve the successive equations in \eqref{intertwine complex}) and consequently, the real equations in \eqref{intertwine}) by taking real parts. Moreover, by a simliar implicit function theorem argument  one observes that $(\alpha,y) \mapsto (\alpha, d_{\alpha_\xi}\phi)$ is a local biholomorphism. Taking real parts, it follows that there exist locally-defined real-analytic functions $\beta_x, \beta_{\xi} \in C^{\omega}$ such that
%
%\begin{align} \label{IFT}
% q_0(\alpha, \alpha^*) &= p_0( \beta_x(\alpha,\alpha_{\xi}^*), \beta_{\xi}(\alpha,\alpha_{\xi}^*)) \\  \nonumber \\  \nonumber
 %\beta_{x}(\alpha,\alpha_{\xi}^*=0) &= \alpha_x, \,\, \beta_{\xi}(\alpha, \alpha_{\xi}^*=0) = \alpha_{\xi}. \end{align}
%
As an example, we note that in the $\R^n$ case with standard phase function $\phi(\alpha,y) = (\alpha_x-y) \alpha_{\xi} + \frac{i}{2} |\alpha_x-y|^2,$ one has
$\beta_x = \alpha_x - \alpha_{\xi}^*$ and $\beta_{\xi} = \alpha_{\xi}+i\alpha_{\xi}^*.$

Given the intrinsic complex structure on the Grauert tube $M_{\tau}^{\C},$  we denote the associated Cauchy-Riemann operators  by $\partial: C^{\infty}(M_{\tau}^{\C}) \to \Omega^{1,0}(M_{\tau}^{\C})$ and $\overline{\partial}: C^{\infty}(M_{\tau}^{\C}) \to \Omega^{0,1}(M_{\tau}^{\C}).$  Moreover, given local coordinates $\alpha_x$ in a chart $U \subset M,$  the corresponding complex coordinates in $U^{\C} \subset M_{\tau}^{\C}$  will be denoted by $\alpha_{z}:= \exp_{\alpha_x}(-i \alpha_{\xi}), \,\, \bar{\alpha_z}  = \exp_{\alpha_x}(  i \alpha_{\xi}).$

Given a smooth one-form $\theta \in \Omega^{1}(M_{\tau}^{\C})$ one can write it in local $(\alpha_x,\alpha_{\xi})$-coordinates in the form
$$\theta = \alpha_x^* d\alpha_x + \alpha_{\xi}^* d \alpha_{\xi}$$
and in terms of complex coordinates $(\alpha_z, \bar{\alpha_z})$ as
$$\theta = \alpha_z^* d\alpha_z + \bar{\alpha_z}^* d  \bar{\alpha_z}.$$
Consequently, in terms of the Cauchy-Riemann operators, $\alpha_z^* = \sigma (\partial)(\alpha)$ and $ \bar{\alpha_{z}}^* =  \sigma(\bar{\partial})(\alpha).$ \red{Here, $\sigma$ denotes the principal symbol of a pseudodifferential operator.}

Given a weight function $\psi \in C^{\infty}(M_{\tau}^{\C})$ and the strictly plurisubharmonic weight $\rho(\alpha_{z},\bar{\alpha_z}) =\frac{ |\alpha_{\xi}|_g^2}{2},$ we consider the associated submanifold $\Lambda \subset \widetilde{M_{\tau}^{\C}} \,$  given by  \

\begin{equation} \label{ILAG}
\Lambda = \{  \,(\alpha, 2i \partial\psi(\alpha)+i(\dbar -\partial)\rho(\alpha) ), \,\, \alpha \in M_{\tau}^{\C} \}. \end{equation}\

As we shall see below, the manifold $\Lambda$ will play an important role in our main exponential weighted estimate in Proposition \ref{weightedl2}.

For future reference, we note that in terms of the local complex coordinates $(\alpha_z, \bar{\alpha}_z)$ in a geodesic normal coordinate chart $U,$ 
\begin{equation} \label{ILAGlocal}
\Lambda |_{U} := \{ ( \, \alpha_z, \bar{\alpha_z}; \alpha_z^*=(2i\partial_{\alpha_z}\psi- i \partial_{\alpha_z}\rho)(\alpha), \, \bar{\alpha_z}^* = i \overline{\partial}_{\alpha_z}\rho(\alpha)   ), \,\, \alpha \in M_{\tau}^{\C} \} \end{equation}\

\subsubsection{ Complex geometry of $\Lambda$}

We first recall some basic complex symplectic geometry: Let $X$ be a complex $n$-dimensional manifold with complex cotangent bundle $T^*X.$ Viewing $X$ as a real-analytic manifold, we let $T^*X_{\red{\R}}$ denote the real $\red{4}n$-dimensional  cotangent bundle. There is a natural identification \cite{Leb} of
$ T^*X_{\R}$ with  $T^*X$  given as follows. Let $v \in TX$ (a complex tangent vector) and $(z, \zeta) \in T^*X$ (a complex covector). Then, the identification $\iota: T^*X \to T^*X_{\R}$ is given by
$$ \iota (z, \zeta(v)) = (z, \xi(v)); \quad \xi(v) = \Re \zeta(v).$$
In terms of local  coordinates $(\Re z, \Im z): X \to \R^{2n}$  and the corresponding dual coordinates $(\xi,\eta) \in T_{(\Re z, \Im z)}^*X_{\R},$
$$ \iota (\Re z, \Im z;\xi,\eta) = (z, \zeta), \quad \zeta = \xi + i \eta.$$
Let $\Gamma \subset T^*X$ be $I$-Lagrangian with respect to the complex symplectic form $\Omega = dz \wedge d \zeta.$ Then, for any contractible coordinate chart $U,$ we recall  that \cite[Lemma 3.1]{Leb} using the identification $\iota,$ one can locally characterize $\Gamma$ as the graph of complex differential; that is,
\begin{equation} \label{ilag}
 \Gamma |_{U} = \{ (z, 2i \partial_{z} f), \,\, z \in U \}, \end{equation}
with $f  \in C^{\infty}(U;\R)$ and $ 2 \partial_z f = ( \partial_{\Re z} + i \partial_{\Im z} ) f.$

%In view of the local charaterization in \eqref{ILAGlocal}), 

%$$ \iota ( \Lambda |_{U}) = \{ (\, \alpha_z, 2i\partial_{\alpha_z} ( ? )  \,), \,\, \alpha_z \in U \},$$ t\

We claim that $\Lambda \subset  T^* ( M_{\tau}^{\C})$ in \eqref{ILAG} can be naturally identified with an  $I$-Lagrangian with respect to the canonical complex symplectic form.  More precisely, consider
$$ \tilde{\Lambda}: = \Lambda - i  \, \text{graph}( d\rho) \subset \widetilde{M_{\tau}^{\C}}.$$
To see that $\tilde{\Lambda}$ is indeed $I$-Lagrangian, we note that since $d = \partial + \dbar,$ one can write
$$ 2 i \partial \psi  + i(\dbar - \partial) \rho = 2i \partial (\psi - \rho)  + i d \rho$$ and so,
$$ \tilde{\Lambda} = \{ (\alpha,2i \partial (\psi - \rho)(\alpha)), \,\, \alpha \in M_{\tau}^{\C} \}.$$
Consequently, in view of \eqref{ilag}, $\tilde{\Lambda}$ is indeed $I$-Lagrangian. Morever, since $\rho$ is strictly plurisubharmonic with $ \partial \dbar \rho >0$, it follows that with $\| \psi \|_{C^2}$ sufficiently small, $\tilde{\Lambda}$ is also $\R$-symplectic.

We note that clearly one can write the Toeplitz multiplier $q_0 |_{\Lambda}$ on the RHS of Proposition \ref{weightedl2} as $\tilde{q_0} |_{\ \tilde{\Lambda}}$ where $\tilde{q_0}(\alpha,\alpha^*) = q_0(\alpha, \alpha^* + i d_{\alpha} \rho(\alpha))$ and $\tilde{\Lambda}$ is the $I$-Lagrangian above. However,  we find working with $q_0$ \red{(from~\eqref{toeplitzmultiplier}) }and $\Lambda$ \red{(from~\eqref{ILAG})} computationally simpler and so we continue to work throughout with these instead.

\begin{rem} \red{Here we call $q_0$ a Toeplitz multiplier in reference to the corresponding Toeplitz operator $S_{hol}q_0T_{hol}$  (see e.g. \cite[Chapter 13.4]{Zw}).} 
\end{rem}

%We note that in the case of $\R^n$, 
%$$ \Lambda = \{ (\alpha_x, \alpha_{\xi};  \partial_{\alpha_{\xi}}\psi - \alpha_{\xi},   \partial_{\alpha_x} \psi ); \,\, (\alpha_{x},\alpha_{\xi}) \in \R^{2n} \} .$$
%Consequently, one directly verifies that $\Lambda$ is $I$-Lagrangian since
%$$ \Im \Omega |_{\Lambda} = d\alpha_{x} \wedge d \partial_{\alpha_x} \psi + d \alpha_{\xi} \wedge d (\partial_{\alpha_{\xi} } \psi - \alpha_{\xi} ) = 0$$

The following $h$-microlocal manifold version of the  microlocal Agmon estimates in $\R^n$  \cite{Mar,Na}  is central to the proofs of Theorems \ref{mainthm}, \ref{mainthm2}.\\ 
\begin{prop} \label{weightedl2}
%Let $\Lambda(\delta)\subset \widetilde{T^*X}$ be the I-Lagrangian in \eqref{ILag}. Then,
 Let $\Lambda \subset \widetilde{M_{\tau}^{\C}}$ be as in \eqref{ILAG}.
Then, for any  $P(h) \in Op_h (S^{0,\infty}_{cla})$ \red{there exists $\delta>0$ so that for} $\psi \in S^0(1)$ with $\| \psi  \|_{C^1}\red{<\delta}$,
\begin{multline*}
 \langle \chi_{in} e^{\psi/h} T_{hol}(h) P(h)   \phi_h,  e^{\psi/h} T_{hol}(h) \phi_h \rangle_{L^2} =  \langle \chi_{in} e^{\psi/h} \, q_0 |_{\Lambda} \, T_{hol}(h)  \phi_h, e^{\psi/h}  T_{hol}(h) \phi_h \rangle_{L^2}\\
 +  O(h)  \| \tilde{\chi}_{in} e^{\psi/h} T_{hol}(h) \phi_h \|_{L^2}^2 + O(e^{-C/h}).\end{multline*}

%Here,  $\partial: C^{\infty}(M_{\tau}^{\C}) \to C^{\infty}(M_{\tau}^{\C})$ is the intrinsic Cauchy-Riemann operator on the Grauert tube $M_{\tau}^{\C}.$
\end{prop}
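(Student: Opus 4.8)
The plan is to reduce the statement to an application of analytic stationary phase, combined with the $h$-pseudodifferential intertwining relation \eqref{intertwining} already constructed in the short-range discussion. First I would invoke \eqref{intertwining}, which states
$$\chi_{in} T_{hol}(h) P(h)\phi_h = \chi_{in} Q(h) T_{hol}(h)\phi_h + O(h^\infty)\|\tilde\chi_{in}T_{hol}(h)\phi_h\|_{L^2} + O(e^{-C/h}),$$
so that, after pairing against $e^{2\psi/h}T_{hol}(h)\phi_h$ (or rather applying $e^{\psi/h}$ to each slot), the left-hand side of the Proposition equals
$$\langle \chi_{in} e^{\psi/h} Q(h) T_{hol}(h)\phi_h, e^{\psi/h} T_{hol}(h)\phi_h\rangle_{L^2} + O(h^\infty)\|\tilde\chi_{in} e^{\psi/h}T_{hol}(h)\phi_h\|_{L^2}^2 + O(e^{-C/h}),$$
where I absorb the weights $e^{\psi/h}$ into the error term using that $\psi\in S^0(1)$ is bounded and that $\chi_{in}$ is supported where $Q(h)$ is $h$-microlocally analytic. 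So the content is to show that conjugating $Q(h)$ by the weight $e^{\psi/h}$ and testing against $T_{hol}(h)\phi_h$ produces, to leading order in $h$, multiplication by the symbol $q_0$ restricted to the I-Lagrangian $\Lambda$ from \eqref{ILAG}, with an $O(h)$ remainder controlled by the (weighted) mass of $T_{hol}(h)\phi_h$ on a slightly larger set.

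The key steps are as follows. Since $T_{hol}(h)$ maps into functions on the Grauert tube $M_\tau^{\C}$ that are, up to the exponential weight $e^{-\rho/h}$, holomorphic, the composition $e^{\psi/h} Q(h) e^{-\psi/h}$ acting on such functions behaves like a Toeplitz operator: its symbol is obtained by restricting the full symbol of the conjugated $h$-$\Psi$DO to the I-Lagrangian determined by the weight $\rho$ shifted by $\psi$, which is exactly $\Lambda$ in \eqref{ILAG}. Concretely, I would write $Q(h)$ as an oscillatory integral with phase $\langle\alpha-\beta,\alpha^*\rangle$ and symbol $q(\alpha,\alpha^*;h)\sim\sum q_j h^j$, conjugate by $e^{\psi/h}$ to shift $\alpha^*\mapsto \alpha^*+ id_\alpha\psi$, then deform the $\alpha^*$-contour of integration by the holomorphy of $T_{hol}(h)\phi_h$ (paying the weight $e^{-\rho/h}$) to pass through the real critical point dictated by $\Lambda$. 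Applying analytic stationary phase in the $(\beta,\alpha^*)$ variables then yields the leading term $q_0|_\Lambda T_{hol}(h)\phi_h$ times $\chi_{in} e^{\psi/h}$, a subleading term of size $O(h)$ times a symbol applied to $T_{hol}(h)\phi_h$ (which after pairing is $O(h)\|\tilde\chi_{in}e^{\psi/h}T_{hol}(h)\phi_h\|^2$), the $O(h^\infty)\|\tilde\chi_{in}\cdots\|^2$ cutoff error from \eqref{intertwining}, and the $O(e^{-C/h})$ error from the analytic calculus. The smallness hypothesis $\|\psi\|_{C^1}<\delta$ is used to guarantee that the shifted Lagrangian $\tilde\Lambda = \Lambda - i\,\mathrm{graph}(d\rho)$ remains $\R$-symplectic (as noted right before the Proposition, using $\partial\dbar\rho>0$), so that the contour deformation stays within the region where the symbols of $Q(h)$ and $T_{hol}(h)$ are analytic and the stationary phase critical point is non-degenerate and real.

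The main obstacle I expect is the careful bookkeeping of the contour deformation and the stationary phase in the presence of the two interacting weights $e^{-\rho/h}$ (intrinsic to $T_{hol}$) and $e^{\psi/h}$ (the Agmon weight): one must check that the critical point of the total phase after the $\alpha^*\mapsto\alpha^*+id_\alpha(\psi-\rho)$-type shift lies on $\Lambda$ — equivalently on the I-Lagrangian $\tilde\Lambda$ characterized via \eqref{ilag} by the potential $\psi-\rho$ — and that the Hessian there has the correct signature so that the analytic stationary phase of \cite{Sj} applies with exponentially small error uniformly on $\supp\chi_{in}$. A secondary technical point is verifying that the subprincipal contributions $q_j$, $j\geq 1$, together with the errors from the cutoffs $\chi(\alpha_x-\beta_x)$ and $\chi(|\alpha_\xi|^{-1}|\alpha_\xi-\beta_\xi|)$ appearing in such compositions (compare the formula for $K(\alpha,\beta)$ in the proof of Proposition \ref{p:longRange}), can all be absorbed into $O(h)\|\tilde\chi_{in}e^{\psi/h}T_{hol}(h)\phi_h\|_{L^2}^2 + O(e^{-C/h})$ once paired; here one uses Cauchy–Schwarz and the boundedness of the relevant symbols on the slightly enlarged support of $\tilde\chi_{in}$. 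Once these geometric and analytic points are in place, the identity is just the pairing of the stationary phase expansion against $e^{\psi/h}T_{hol}(h)\phi_h$, rearranged into the stated form.
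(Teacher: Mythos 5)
Your overall skeleton matches the paper's: start from the intertwining relation \eqref{intertwining}, conjugate $Q(h)$ by $e^{\psi/h}$ via the contour shift $\alpha^*\mapsto\alpha^*+i\,d_\alpha\psi$ (this is exactly where $\|\psi\|_{C^1}<\delta$ is used, to keep the shifted contour inside the tube where the analytic symbol $q$ extends), and then exploit the fact that $e^{\rho/h}T_{hol}(h)\phi_h$ is holomorphic to restrict the symbol to $\Lambda$. The error bookkeeping you describe is also the right one. But the central step --- \emph{how} the symbol gets evaluated on $\Lambda$ --- is where your argument has a genuine gap. You propose to obtain $q_0|_\Lambda\,T_{hol}(h)\phi_h$ as an operator-level statement by analytic stationary phase in $(\beta,\alpha^*)$ on the kernel of $Q^\psi(h)T_{hol}(h)$. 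This cannot work as stated: $T_{hol}(h)\phi_h$ is a superposition over $y\in M$ of coherent states, and the critical value of $\alpha^*$ depends on $y$. Holomorphy of $e^{(\rho-\psi)/h}\cdot e^{\psi/h}T_{hol}(h)\phi_h$ pins only the \emph{antiholomorphic} momentum: $h\overline{D}_{\alpha_z}$ acts on $e^{\psi/h}T_{hol}(h)\phi_h$ as multiplication by $-i\overline{\partial}_{\alpha_z}(\psi-\rho)$ (this is \eqref{invariance}), whereas $hD_{\alpha_z}$ applied to it is a genuine derivative whose ``value'' is not determined by the weight alone. So a one-sided stationary phase can at best restrict $\bar{\alpha}_z^*$ to its $\Lambda$-value; it cannot restrict $\alpha_z^*$, and the claimed identity $Q^\psi(h)T_{hol}(h)\phi_h = q_0|_\Lambda T_{hol}(h)\phi_h+O(h)(\cdots)$ is false at the operator level (think of the Bargmann model, where $hD_{\bar z}$ is multiplication on the weighted holomorphic subspace but $hD_z$ is not).

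The paper's proof supplies precisely the missing idea: the restriction to $\Lambda$ is only true in the \emph{quadratic form} sense, and the second copy of $e^{\psi/h}T_{hol}(h)\phi_h$ in the pairing is used in an essential way. Concretely, one Taylor-expands the conjugated principal symbol to first order, $q_0^\psi = q_0|_\Lambda + r_1\,(\bar{\alpha}_z^*+i\overline{\partial}_{\alpha_z}\tilde{\psi}) + r_2\,(\alpha_z^*-i\partial_{\alpha_z}\tilde{\psi})$ with $\tilde\psi=\psi-\rho$ (see \eqref{taylor}); the $r_1$-remainder annihilates the first slot by \eqref{invariance}, while the $r_2$-remainder is moved by adjunction onto the conjugate factor in the second slot, where it is killed by the conjugate identity \eqref{invariance2}. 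The $O(h)\|\tilde{\chi}_{in}e^{\psi/h}T_{hol}(h)\phi_h\|_{L^2}^2$ error then comes from commutators $[Op_h(S^{m_1}),Op_h(S^{m_2})]\in hOp_h(S^{m_1+m_2})$ and from differentiating the cutoff $\chi(\alpha_x,y)$ in the kernel of $T_{hol}$, not from a stationary-phase expansion. If you want to pursue your Toeplitz picture rigorously, you would have to perform the stationary phase on the full sesquilinear kernel (with $\overline{T_{hol}}$ on the other side), which amounts to the same two identities; as written, your proof is missing the step that fixes the holomorphic component $\alpha_z^*$ of the evaluation point.
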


\begin{proof}

The operator $Q_{\psi}(h):= \chi_{in} e^{\psi/h} Q(h) e^{-\psi/h}$ has Schwartz kernel
\begin{align} \label{SK}
Q_{\psi}(\alpha,\beta,h) = (2\pi h)^{-2n}  \lim_{\ep \to 0^+} \int e^{i \langle \alpha - \beta, \alpha^* \rangle /h} e^{- \epsilon \langle \alpha^* \rangle/h} e^{i [ \psi(\alpha) - \psi(\beta)]/h}  \chi_{in}(\alpha) \, q(\alpha,\alpha^*;h) \, d\alpha^*. \end{align}
By Taylor expansion,
$$  \psi(\alpha) - \psi(\beta) = \langle \Psi(\alpha,\beta), \alpha - \beta \rangle$$
with $| \Psi| < \| \psi \|_{C^1} < \delta.$ Since $q(\alpha,\alpha^*,h)$ is analytic, for $\delta >0$ small it follows by Stokes formula one can make the contour deformation 
$$ \alpha^* \mapsto \alpha^* + i \Psi(\alpha,\beta)$$ in \eqref{SK}. Boundary terms as $|\alpha^*| \to \infty$ vanish and one gets that
$Q^{\psi}(h) \in Op_{h}(S^{0}(1))$ with symbol
$$ q^{\psi}(\alpha,\alpha^*,h) \sim \sum_{j=0}^{\infty} \chi_{in}(\alpha) q_j( \alpha, \alpha^* + i d_{\alpha} \psi) h^j.$$
and principal symbol
$$ q^{\psi}_0(\alpha,\alpha^*) = q_{0}(\alpha, \alpha^* + i d_{\alpha} \psi(\alpha)),$$
where $q_0$ is defined in \eqref{toeplitzmultiplier}.

In view of \eqref{intertwining} it follows that
\begin{align} \label{weighted1}
\langle \chi_{in} e^{\psi/h} T_{hol}(h) P(h)   \phi_h, e^{\psi/h} T_{hol}(h) \phi_h \rangle_{L^2}  &=  \langle \chi_{in} Q^{\psi}(h)  \, [e^{\psi/h} T_{hol}(h)    \phi_h ], \, e^{\psi/h} T_{hol}(h) \phi_h \rangle_{L^2} \nonumber \\
& + O(h^{\infty}) \| \tilde{\chi}_{in} e^{\psi/h} T_{hol}(h) \phi_h \|_{L^2}^2 + O(e^{-C/h}). \end{align} \
Next, in analogy with \cite{Mar}, we observe that with $\bar{D}_{\alpha_z} = \frac{1}{i} \bar{\partial}_{\alpha_z},$
\begin{align} \label{invariance}
h \overline{D}_{\alpha_z} \Big( e^{\psi/h} T_{hol}(h) \phi_h \Big) &= \int_{M} h \overline{D}_{\alpha_z} \Big( e^{  [ \, - r^{2}_{\C}(\alpha_z,y)/2 + \psi(\alpha_z,\bar{\alpha_z})  - \rho(\alpha) \, ] /h } a(\alpha_z,y,h) \chi(\alpha_x,y) \Big) \phi_h(y) dy \\  \nonumber
&= \Big(-i\overline{\partial}_{\alpha_z} \psi(\alpha)  + i \overline{\partial}_{\alpha_z} \rho(\alpha) \Big) \, e^{\psi/h} T_{hol}(h) \phi_h  + O(e^{-C(\ep_0)/h}) \\ \nonumber 
&= -  i  \overline{\partial}_{\alpha_z} \tilde{\psi}(\alpha)  \, e^{\psi/h} T_{hol}(h) \phi_h  + O(e^{-C(\ep_0)/h}),
\end{align}
where we have written 
$$\tilde{\psi}=\psi-\rho.$$
The last line in \eqref{invariance} follows since $r^{2}_{\C}(\cdot,y)$ and $a(\cdot,y,h)$ are holomorphic and the exponential error arises from differentiation of the cutoff $\chi(\alpha_x,y).$ \red{In particular, when $r(\alpha_x,y)\geq \e_0>0$, there exists $C(\e_0)>0$ so that}
$$ \red{\Re}  [ \, - r^{2}_{\C}(\alpha_z,y)/2 + \psi(\alpha_z,\bar{\alpha_z})  - \rho(\alpha) ] \red{\leq -} C(\e_0)\red{<}0.$$

Similarly, taking complex conjugates, one gets that

\begin{align} \label{invariance2}
hD_{\alpha_z} \Big( \overline{ e^{\psi/h} T_{hol}(h) \phi_h } \Big) &=  i \partial_{\alpha_z} \tilde{\psi}(\alpha)  \, \overline{ e^{\psi/h} T_{hol}(h) \phi_h}  + O(e^{-C(\ep_0)/h}).
\end{align}

  Taylor expansion of the principal symbol $q^{\psi}_0$ of $Q_{\psi}(h)$ around $\bar{\alpha_z}^* = - i \overline{\partial}_{\alpha_z} \tilde{\psi}$  and $\alpha_z^* = i \partial_{\alpha_z} \tilde{\psi}$ gives 
  \begin{align} \label{taylor}
q_0^{\psi}(\alpha,\alpha^*) &= q_0 (\alpha, \alpha^* + i d_{\alpha} \psi(\alpha)) \nonumber \\ \nonumber \\
&= q_0(\alpha_z,\bar{\alpha_z}; \alpha_z^* = i \partial_{\alpha_z} (\psi-\rho) + i \partial_{\alpha_z} \psi,  \, \bar{\alpha_z}^* = - i \overline{\partial}_{\alpha_z}(\psi -\rho) + i \overline{\partial}_{\alpha_z}\psi )  \nonumber \\ \nonumber
&\qquad+ r_1(\alpha,\alpha^*) ( \bar{\alpha}_{z}^* + i \overline{\partial}_{\alpha_z} \tilde{\psi} ) + r_2(\alpha,\alpha^*) ( \alpha_{z}^* - i \partial_{\alpha_z} \tilde{\psi} ) \\ \nonumber \\ \nonumber
&= q_0 (\alpha_z, \bar{\alpha_z}; \alpha_z^* = 2 i \partial_{\alpha_z} \psi - i \partial_{\alpha_z} \rho, \, \bar{\alpha_z}^* =  i \overline{\partial}_{\alpha_z}\rho)  \nonumber \\
&\qquad+ r_1(\alpha,\alpha^*) ( \bar{\alpha}_{z}^* + i \overline{\partial}_{\alpha_z} \tilde{\psi} ) + r_2(\alpha,\alpha^*) ( \alpha_{z}^* - i \partial_{\alpha_z} \tilde{\psi} ) \\ \nonumber \\ \nonumber
 &=  q_{0}(\alpha) |_{\alpha \in \Lambda} + r_1(\alpha,\alpha^*) ( \bar{\alpha}_{z}^* + i \overline{\partial}_{\alpha_z} \tilde{\psi} ) + r_2(\alpha,\alpha^*) ( \alpha_{z}^* - i \partial_{\alpha_z} \tilde{\psi} )  \end{align}
Since $r_1,r_2 \in S^{0}(1),$ it then follows from \eqref{weighted1}-\eqref{taylor}, $L^2$-boundedness of pseudodifferential operators, and that $[Op_h(S^{m_1}),Op_h(S^{m_2})]\in hOp_h(S^{m_1+m_2})$ (see for example \cite[Chapters 4,9]{Zw}) that

\begin{align} \label{toeplitz}
\langle \chi_{in} Q_{\psi}(h)  \, [e^{\psi/h} T_{hol}(h)    \phi_h ] \,, e^{\psi/h} T_{hol}(h) \phi_h \rangle_{L^2}  = 
\langle \chi_{in} e^{\psi/h} q_{0}(\alpha) |_{\alpha \in \Lambda} T_{hol}(h) \phi_h,   \, e^{\psi/h} T_{hol}(h) \phi_h \rangle_{L^2} &\nonumber \\ \nonumber \\
 + \langle \chi_{in} r_1(\alpha,hD_\alpha) [ h\overline{D}_{\alpha_z}  + i \overline{\partial}_{\alpha_z} \tilde{\psi} ] e^{\psi/h} T_{hol}(h) \phi_h, \, e^{\psi/h} T_{hol}(h) \phi_h \rangle_{L^2}& \nonumber \\ \nonumber \\
  + \langle \chi_{in}   e^{\psi/h} T_{hol}(h) \phi_h, \, r_2( \alpha ,h D_{\alpha}) [ h\overline{D}_{\alpha_z}  + i \overline{\partial}_{\alpha_z} \tilde{\psi} ]  e^{\psi/h} T_{hol}(h) \phi_h \rangle_{L^2}  
+ O(h) \| \tilde{\chi}_{in} e^{\psi/h} T_{hol}(h) \phi_h \|_{L^2}^2.& \nonumber \\ \nonumber \\
=\langle \chi_{in} e^{\psi/h} q_{0}(\alpha) |_{\alpha \in \Lambda} T_{hol}(h) \phi_h,   \, e^{\psi/h} T_{hol}(h) \phi_h \rangle_{L^2} 
 + O(h) \| \tilde{\chi}_{in} e^{\psi/h} T_{hol}(h) \phi_h \|_{L^2}^2. & \end{align}
 This finishes the proof of the Proposition. \end{proof}

\subsection{Microlocal eigenfunction decay estimates}

\subsubsection{Estimation of the multiplier $q_0 |_{\Lambda}$}\ \
In order to prove Theorem \ref{mainthm2} we give an invariant characterization of the Toeplitz multiplier $q_0 |_{\Lambda}.$ 

In terms of the local coordinates $(\alpha_z, \bar{\alpha}_z)$ the Toeplitz multiplier in Proposition \ref{weightedl2} is 
$$q(\alpha_z,\bar{\alpha_z},i\partial_{\alpha_z}(2\psi-\rho),i\partial_{\bar{\alpha}_z}\rho).$$
To give this in invariant meaning, we note that  function $q_{0} \in C^{\omega}(T^*M_{\tau}^{\C})$ and 
$$(\alpha_z,\bar{\alpha_z},i\partial_{\alpha_z}(2\psi-\rho),i\partial_{\bar{\alpha}_z}\rho)$$
 are local coordinates for the point 
\begin{align*} 
\zeta &=    i\partial_{\alpha_z}(2\psi-\rho)(\alpha)  \, d\alpha_z + i\partial_{\bar{\alpha}_z}\rho(\alpha) \,  d\bar{\alpha}_z   \in T^*M_{\tau}^{\C}.\\
&=2i\partial\psi(\alpha) +i(\dbar-\partial)\rho(\alpha) \in T^*M_{\tau}^{\C}.
\end{align*}
Consequently, the Toeplitz  multiplier  equals
 \begin{align} \label{multiplier}
 & q_0^{\psi}( \alpha, i \partial \tilde{\psi}(\alpha) - i \dbar \tilde{\psi}(\alpha)) 
  = q_0 (\alpha, i (\partial - \dbar) \tilde{\psi}(\alpha)  + i d \psi (\alpha) )  \nonumber \\
& = q_0 (\alpha, 2 i \partial \psi(\alpha) +  i(\dbar - \partial) \rho(\alpha) ) = q_0 |_{\Lambda}(\alpha), \quad \alpha \in M_{\tau}^{\C}, \end{align}\
in view of the definition of $\Lambda$ in \eqref{ILAG}, where
\begin{equation} \label{intrinsicmult}
 q_0 |_{\Lambda}(\alpha) = q_0( \alpha,2i\partial\psi(\alpha) - i (\partial-\dbar)\rho(\alpha) ).\end{equation}
Here, $\partial, \dbar: C^{\infty}(M_{\tau}^{\C}) \to \Omega^{1}(M_{\tau}^{\C})$ are the intrinsic Cauchy-Riemann operators. 

The $\rho$ portion, $i\dbar \rho-i\partial \rho$, of the argument on the RHS of \eqref{intrinsicmult} can be readily computed. Given the strictly-plurisubharmonic weight function $\rho(\alpha) = \frac{1}{2} |\alpha_{\xi}|^2_{\alpha_x}$, we recall that \cite[p. 568]{GS1},
\begin{equation} \label{invariant}
i (\bar{\partial} - \partial) \rho = \omega,
\end{equation}\
\noindent where $\omega = \sum_j \alpha_{\xi_j} d\alpha_{x_j}$ is the canonical one-form. Since both sides  are invariant, one can easily verify the identity \eqref{invariant} by computing in geodesic normal coordinates at the center of the coordinate chart.

 It follows from \eqref{invariant} that 
 \begin{equation}
 \label{restricted values}
 \begin{aligned} 
 \alpha_x^*(2i\partial\psi -i(\partial-\dbar)\rho)&=\alpha_x^*(2i\partial\psi)+\alpha_\xi \\
 \alpha_\xi^*(2i\partial\psi-i(\partial-\dbar)\rho)&=\alpha_\xi^*(2i\partial\psi).
 \end{aligned}
 \end{equation}

%implies that
%\begin{equation} \label{key}
%\alpha_{\xi}^* = O(|\partial \psi|) + O( \partial_{\alpha_x} \rho). \end{equation}

%In particular, in view of \eqref{IFT},

From \eqref{restricted values} one gets
\begin{align} \label{symbolupshot}
q |_{\Lambda}(\alpha) &= p_0 \Big( \, \beta_x( \alpha, \, \alpha_{\xi}^* ( 2i\partial\psi) ) \, , \beta_{\xi}( \alpha,\alpha_{\xi}^* ( 2i\partial\psi ) \, \Big) \nonumber \\ 
&= p_0 ( \, \alpha_x + O(|\partial \psi|), \, \alpha_{\xi} + O(|\partial \psi|)  \, ), \end{align}\
where in the last equality we have used \eqref{twist}. 

\begin{rem} \label{kahler}
We recall that $\rho$ is a Kahler potential for the intrinsic K\red{a}hler form $\Omega_g$ on the Grauert tube corresponding to the complex structure $J_g$ induced by complexified Riemannian exponential map of the metric $g$; that is, 
$$  \partial \bar{\partial} \rho = \Omega_g.$$

The corresponding exhaustion function $\sqrt{2\rho}(\alpha) = |\alpha_{\xi}|_g$ satisfies homogeneous Monge-Ampere,
$$ \det \Big(  \frac{\partial^2 \sqrt{\rho} }{\partial \alpha_{z_i} \partial \bar{\alpha_{z_j}} } \Big) = 0.$$
We refer to \cite{GS1, LS} for further details.
\end{rem}\\

From Proposition \ref{weightedl2} and \eqref{symbolupshot} we get the following useful estimate.

\begin{prop}\label{useful}
Under the same assumptions as in Proposition \ref{weightedl2},
\begin{align*}
&\langle \chi_{in} e^{\psi/h} T_{hol}(h) P(h)   \phi_h, e^{\psi/h} T_{hol}(h) \phi_h \rangle_{L^2}  = \\
&\qquad\qquad\qquad\qquad\qquad\qquad\qquad\qquad \langle  \chi_{in} e^{\psi/h} \, p(\alpha + O(|\partial \psi |) )  \, T_{hol}(h)  \phi_h, e^{\psi/h} T_{hol}(h) \phi_h  \rangle_{L^2}  \\ 
\\
&\qquad \qquad\qquad\qquad\qquad\qquad\qquad\qquad\qquad +  O(h)  \| \tilde{\chi}_{in} e^{\psi/h} T_{hol}(h) \phi_h \|_{L^2}^2 + O(e^{-C/h}). \end{align*}\
\end{prop}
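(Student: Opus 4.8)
The plan is to combine the two main ingredients assembled just before the statement: Proposition \ref{weightedl2}, which converts the left-hand inner product into an inner product against the Toeplitz multiplier $q_0|_\Lambda$ modulo an $O(h)$ weighted term and an exponentially small error, and the identity \eqref{symbolupshot}, which identifies $q_0|_\Lambda(\alpha)$ with $p_0(\alpha_x+O(|\partial\psi|),\alpha_\xi+O(|\partial\psi|))$. So the proof is essentially a one-line substitution, but the point is to check that the error terms in Proposition \ref{weightedl2} survive unchanged when $q_0|_\Lambda$ is replaced by $p(\alpha+O(|\partial\psi|))$.

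First I would invoke Proposition \ref{weightedl2} verbatim to write
\begin{align*}
\langle \chi_{in} e^{\psi/h} T_{hol}(h) P(h) \phi_h, e^{\psi/h} T_{hol}(h)\phi_h\rangle_{L^2}
&= \langle \chi_{in} e^{\psi/h}\, q_0|_\Lambda\, T_{hol}(h)\phi_h, e^{\psi/h} T_{hol}(h)\phi_h\rangle_{L^2}\\
&\quad + O(h)\,\|\tilde\chi_{in} e^{\psi/h} T_{hol}(h)\phi_h\|_{L^2}^2 + O(e^{-C/h}).
\end{align*}
Next, using \eqref{intrinsicmult}, \eqref{invariant}, \eqref{restricted values} and \eqref{twist} — all established in the paragraphs above — one has the pointwise identity $q_0|_\Lambda(\alpha) = p_0(\beta_x(\alpha,\alpha_\xi^*(2i\partial\psi)),\beta_\xi(\alpha,\alpha_\xi^*(2i\partial\psi))) = p_0(\alpha_x + O(|\partial\psi|),\alpha_\xi + O(|\partial\psi|))$ for $\alpha\in M_\tau^\C$, which is exactly \eqref{symbolupshot}. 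Since $q_0|_\Lambda$ enters only as a multiplication operator (a function of $\alpha$, not a genuine pseudodifferential operator), substituting this identity into the inner product is immediate and changes nothing in the error structure. The writing "$p(\alpha+O(|\partial\psi|))$" in the statement is shorthand for this composition $p_0\circ(\beta_x,\beta_\xi)$ evaluated along $\Lambda$; I would remark that the $O(|\partial\psi|)$ is uniform on $M_\tau^\C$ because $\beta_x,\beta_\xi$ are real-analytic near the diagonal $\alpha_x = y$ with the expansions \eqref{twist}.

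The only genuine point to check — and the step I'd flag as the mild obstacle — is that no hidden lower-order (in $h$) terms appear when passing from the full conjugated symbol $q^\psi$ to its principal part $q_0^\psi = q_0|_\Lambda$: this is already handled inside the proof of Proposition \ref{weightedl2}, where the $r_1,r_2\in S^0(1)$ remainders in the Taylor expansion \eqref{taylor} were absorbed into the $O(h)\|\tilde\chi_{in} e^{\psi/h} T_{hol}(h)\phi_h\|_{L^2}^2$ term using \eqref{invariance}, \eqref{invariance2} and the commutator bound $[Op_h(S^{m_1}),Op_h(S^{m_2})]\in h\,Op_h(S^{m_1+m_2})$. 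Hence nothing new needs to be verified here, and the proposition follows at once. I would close with a single sentence: "This follows immediately from Proposition \ref{weightedl2} and the identity \eqref{symbolupshot} for $q_0|_\Lambda$." Everything else — the uniformity of the $O(|\partial\psi|)$ and the $h$-order bookkeeping — is inherited from the already-proved Proposition \ref{weightedl2}.
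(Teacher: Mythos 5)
Your proposal matches the paper's argument exactly: the paper states that Proposition \ref{useful} follows directly "from Proposition \ref{weightedl2} and \eqref{symbolupshot}," which is precisely the substitution of the pointwise multiplier identity $q_0|_{\Lambda}(\alpha)=p_0(\alpha_x+O(|\partial\psi|),\alpha_\xi+O(|\partial\psi|))$ into the conclusion of Proposition \ref{weightedl2}. Your additional remarks on the uniformity of the $O(|\partial\psi|)$ term and on the error bookkeeping being already absorbed in the proof of Proposition \ref{weightedl2} are correct and consistent with the paper.
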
\

\subsection{Microlocal concentration of the eigenfunctions: Proof of Theorem \ref{mainthm2}} \label{mainthm2proof}
In this section, we prove the global  weighted decay estimate in Theorem \ref{mainthm2}.
%Since the construction of the full weight function $\psi_{hol}$ is fairly delicate, we start by proving the first part of the theorem The detailed construction is then carried in subsection \ref{refine} by modifying the approach below.  

\begin{proof}
We first prove the weighted estimate in \eqref{e:global} $h$-microlocally on support of $\chi_{in} \in C^{\infty}_{0}(M_{\tau}^{\C})$ and for the $h$-microlocal FBI transform $T_{hol}(h).$ 
\begin{equation} \label{hol bound}
 \| \chi_{in} e^{\psi/h} T_{hol}(h) \phi_h \|_{L^2(T^*M)} = O(1).\ \end{equation} \

The transform $T_{hol}(h)$ is only defined on the Grauert tube $M_{\tau}^{\C},$ which is generally a proper, bounded subset of $T^*M.$  We then need to prove that the weighted bound in \eqref{hol bound} still holds for $T(h) = T_{geo}(h)$, after possibly shrinking $\delta >0$ somewhat (independently of $h$). 

In the following, we let $P(h) \in Op_h(S^{0,k}_{cla})$ be as in the statement of Theorem \ref{mainthm2} and $\phi_h$ be an exponential quasimode with
$$ P(h) \phi_h = O(e^{-c/h}).$$

\subsubsection{$h$-microlocal bounds for $T_{hol}(h) \phi_h$}

{\red{Recall $\psi=\frac{\delta p^2}{2\langle \xi\rangle^{2k}}.$ For}} $\delta >0$ small,

\begin{equation} \label{comparable}
\frac{1}{C(\delta)} p^2(\alpha_x,\alpha_{\xi}) \leq \Re p^2(\alpha + O(|\partial_{\alpha} \psi|) ) \leq C(\delta) p^2(\alpha_x,\alpha_{\xi}). \end{equation}

\noindent To see that \eqref{comparable} holds, we split into two cases:
 
 \begin{enumerate}
\item[Case (i)] $\{ \alpha; |p(\alpha)| \ll 1\}$ (near the characteristic variety). Here,  we make a Taylor expansion to get
 \begin{equation} \label{taylor1}
 p^2(\alpha + O(|\partial_{\alpha}\psi|) = p^2(\alpha) +  O(\delta) |2p\partial_{\alpha}p|^2 = p^2(\alpha) + O(\delta \, p^2(\alpha)).
 \end{equation}
 
 % &p( \alpha + O(\delta ( |\alpha_\xi|_g^2 -1) ) ) \nonumber \\
% &= | \alpha_{\xi}_g^2 . + O( \delta  ( \alpha_{\xi}^2_{g} -1) ) - 1|^2 
 Consequently, near $p=0$ it follows from \eqref{taylor1} that for $\delta >0$ small,
$$  \Re p^2(\alpha + O(|\partial_{\alpha}\psi|) ) \geq c p^2(\alpha).$$\

\item[Case (ii)] $\{ \alpha; |p(\alpha)| \gtrapprox 1 \}$ (far field). Here we use the fact that $\psi \in S^{0}(1)$ and $ |\partial p^2| \lessapprox \langle \alpha_{\xi} \rangle^{2k-1}$ and just make the first-order Taylor expansion
$$p^2(\alpha +   O(|\partial_{\alpha}\psi|) ) = p^2(\alpha) + O( \delta  \langle \alpha_{\xi} \rangle^{2k-1} ).$$
Since $ p^2(\alpha) \gtrapprox \langle \alpha_{\xi} \rangle^{2k}$ in this range, $\eqref{comparable}$ is also satisfied in this case, provided one chooses $\delta >0$ small.
\end{enumerate}

Since $P(h) \in Op_h(S^{0,\infty}_{cla})$ implies that also $P^2(h) \in Op_h(S^{0,\infty}_{cla}),$ and so Proposition \ref{useful} applies just as well with the latter. 
We note that by \eqref{comparable},  there is a constant $C>0$ such that
$$ \frac{1}{C} p^2(\alpha) \leq \Re p^2|_{\Lambda}(\alpha) \leq C p^2(\alpha), \quad \alpha \in \text{supp}\, \chi_{in}$$ and so, by an application of Proposition  \ref{useful}  with the globally-defined weight function $\psi$ in the statement of Theorem \ref{mainthm2},  it follows that

\begin{align} \label{holomorphic estimate}
\langle \chi_{in} e^{\psi/h} T_{hol}(h) P^2(h)  \phi_h, e^{\psi/h} T_{hol}(h) \phi_h \rangle _{L^2} & = \langle \chi_{in} e^{\psi/h}  p^2|_{\Lambda} T_{hol}(h)  \phi_h, e^{\psi/h} T_{hol}(h) \phi_h \rangle_{L^2}  \nonumber \\  
 &+  O(h)  \| \tilde\chi_{in} e^{\psi/h} T_{hol}(h)  \phi_h \|_{L^2}^2  + O(e^{-C/h})  \nonumber \\ \nonumber \\
  \geq \langle \chi_{in} e^{\psi/h}  p^2 T_{hol}(h)  \phi_h, e^{\psi/h} T_{hol}(h) \phi_h \rangle_{L^2}   
 &+  O(h)  \| \chi_{in} e^{\psi/h} T_{hol}(h)  \phi_h \|_{L^2}^2 + O(e^{-C/h}).  
 \end{align}\
 
 In the last line of \eqref{holomorphic estimate} we have used the long-range estimate \eqref{p:longRange} yet again to write 
$$\| \tilde{\chi}_{in} e^{\psi/h} T_{hol}(h)  \phi_h \|_{L^2}^2 =  \| \chi_{in} e^{\psi/h} T_{hol}(h)  \phi_h \|_{L^2}^2 + O(e^{-C/h}).$$

 %where $\tilde\chi_{in}$ has $\tilde{\chi}_{in}\equiv 1$ on $\supp \chi_{in}$  and we have simply replaced $P(h)$ by $P(h)^2$ in Proposition \ref{useful}.
To bound the LHS in \eqref{holomorphic estimate} note that $A_k(h):=S_{hol}(h) \lan \alpha_\xi \ran^{k}T_{hol}(h)\in  Op_h(S^{0,k}_{cla})$. Hence, $A_{-k}(h)P(h)\in \Op_h(S^{0,0}_{cla})$ and since $P(h)$ is classically elliptic with $\|P(h) \red{\phi_h}\|_{L^2}+\|\red{\phi_h}\|_{L^2}=O(1)$, $A_{k}(h)\red{\phi_h}\in L^2$. In particular,
\begin{align*} 
&\lan \chi_{in} e^{\psi/h}T_{hol}(h)P^2(h) \red{\phi_h}, e^{\psi/h}T_{hol}(h)\red{\phi_h}\ran\\
&=\lan  \chi_{in} e^{\psi/h}\lan \alpha_\xi \ran^{-k}T_{hol}(h)P^2(h) \red{\phi_h}, e^{\psi/h}\lan \red{\alpha_\xi}\ran^kT_{hol}(h)\red{\phi_h}\ran\\
&=\lan \chi_{in} e^{\psi/h}T_{hol}(h)A_{-k}(h)P^2(h) \red{\phi_h},e^{\psi/h}T_{hol}(h)A_k(h) \red{\phi_h}\ran+O(e^{-C/h})\\
&=O(e^{-C/h}),
\end{align*}
since by assumption $P(h) \phi_h = O_{L^2}(e^{-C/h}).$
%Note also that we have apriori from our long range estimates in \eqref{p:longRange}) that 
%$$\|T_{hol}(h)\phi_h\|_{L^2(\{|p|\geq \delta\})}=O(e^{-C/h}).$$

Thus, from \eqref{holomorphic estimate} one gets that for $\delta$ small enough \red{and $C=C(\delta)>0$},
\begin{equation} \label{weight2}
 \langle \chi_{in} e^{\psi/h}  p^2 T_{hol}(h)  \phi_h, e^{\psi/h} T_{hol}(h) \phi_h \rangle_{L^2} = O_{\delta}(h)  \| \chi_{in} e^{\psi/h} T_{hol}(h) \phi_h \|_{L^2}^2+O(e^{-C/h}). \end{equation}\

 We note that for any ${\red{N}}>0,$
 $$ \sup_{  p^2(\alpha) \leq {\red{N}} h } \psi(\alpha) = O(h), $$
 so that $e^{\psi(\alpha)/h} = O(1)$ when $p^2(\alpha) \leq {\red{N}} h.$
 It then follows from \eqref{weight2} that
 \begin{multline} \label{weight3}
 \langle \chi_{in} e^{\psi/h} [  p^2 + O(h)] T_{hol}(h)  \phi_h, e^{\psi/h} T_{hol}(h) \phi_h \rangle_{L^2 (\{ p^2 \geq {\red{N}}h\}) }  \\
 = O_{\delta}(h)  \| \chi_{in} T_{hol}(h)\phi_h \|_{L^2 (\{ p^2 \leq {\red{N}} h \}) }^2  + O(e^{-C/h}) = O(h). \end{multline} 
Choosing ${\red{N}=\red{N(\delta)}}>0$ large enough to absorb the $O(h)$ term on the LHS of \eqref{weight3} (which is independent of ${\red{N}}>0$), it follows that
\begin{equation} \label{weight4}
\| \chi_{in} e^{\psi/h} T_{hol}(h) \phi_h \|_{L^2 (\{ p^2 \geq {\red{N}}h\}) }  = O(1). \end{equation}
Clearly, since $\psi \approx p^2$ near $p=0$ it also follows that
$$ \| \chi_{in} e^{\psi/h} T_{hol}(h) \phi_h \|_{L^2 (\{ p^2 \leq {\red{N}}h\})} = O(1)$$
which finishes the proof of \eqref{hol bound}.

\subsubsection{Weighted bounds in terms of $T_{geo}(h)$}

Since $T_{hol}(h)$ is only $h$-microlocally defined, we need to show that essentially the same weighted bound holds for the globally-defined FBI transform $T_{geo}(h).$
More precisely, in this section we show that with $\ep_0 <1$ sufficiently small (but independent of $h$) the analogue of \eqref{hol bound} holds for the globally-defined FBI transform. That is 
\begin{equation} \label{geobound}
\| \chi_{in} e^{ \ep_0 \psi / h} T_{geo}(h) \phi_h \|_{L^2} = O(1). \end{equation}\
Given \eqref{geobound}, the global result in Theorem \ref{mainthm2} follows (after possibly shrinking $\delta>0$ further) since we have already established the long range bound in \eqref{lr2}.

To prove \eqref{geobound}, we  write
\begin{align} \label{comp}
\chi_{in} e^{ \ep_0 \psi/h} T_{geo}(h) \phi_h &= \red{\chi_{in}e^{\ep_0\psi/h}T_{geo}(h)S_{hol}(h)T_{hol}(h) \phi_h+O(e^{-C/h}) }\nonumber\\
&=\chi_{in} e^{\ep_0 \psi/h} T_{geo}(h) S_{hol}(h) \tilde{\chi}_{in}  T_{hol}(h) \phi_h + O(e^{-C/h}) \\ \ \nonumber \\ \nonumber
&= \chi_{in} e^{\ep_0 \psi/h} T_{geo}(h) S_{hol}(h) e^{-\psi/h}  \tilde{\chi}_{in} e^{\psi/h} T_{hol}(h) \phi_h + O(e^{-C/h}). \end{align}\
\red{Note in the second line we us that $T_{geo}S_{hol}$ is pseudolocal modulo exponential errors.} Then, by an application of analytic stationary phase, after possibly shrinking $\delta >0$,  the Schwartz kernel of the operator $\chi_{in} e^{\ep_0 \psi/h} T_{geo}(h) S_{hol}(h) e^{-\psi/h}$ can be written in  the form

$$ e^{i \Phi(\alpha,\beta)/h} c(\alpha,\beta;h) \chi(\alpha_x- \beta_x)  \chi( |\alpha_{\xi}|^{-1} |\alpha_{\xi} - \beta_\xi|)  + O_N( e^{-C/h} ) \max \{ |\alpha_{\xi}|,|\beta_{\xi}| \}^{-N},$$ 

\noindent where,  
$$ \Im \Phi(\alpha, \beta) \geq - \ep_0 p^2(\alpha) + p^2(\beta) +  |\alpha - \beta|^2,$$\
$c \in S^{{\red{n}},0}_{cla}$.
To estimate $ \Phi(\alpha, \beta)$ when $\alpha$ and $\beta$ are near $p^{-1}(0),$ we note that by Taylor expansion of $p^2(\alpha)$ around $\alpha = \beta$ one gets that
 $$  \Im \Phi(\alpha, \beta) \geq - \ep_0 p^2(\alpha) +  p^2(\beta) + C_0 ( p(\alpha) - p(\beta) )^2,$$ 
 with some $C_0 >0.$ Writing $x = p(\alpha)$ and $y = p(\beta)$ it therefore suffices to consider the function
$$f(x,y)= - \ep_0 x^2 + y^2 + C_0 (x-y)^2; \,\,\,\, (x,y) \in \R^2.$$ \

An application of max/min shows that $f(x,y) \geq 0$  provided $\ep_0 (C_0^2)>0$ is chosen sufficienty small and consequently, it follows that for small $\ep_0>0,$
$$ \Im \Phi(\alpha,\beta) \geq 0.$$
Then, from \eqref{comp} and an application of the Schur lemma, it follows that for $\ep_0>0$ sufficiently small,
$$ \| e^{\ep_0 \psi/h} T_{geo}(h) S_{hol}(h) e^{-\psi/h} \|_{L^2 \to L^2} = O(1)$$ 
and so, 
$$  \| \chi_{in} e^{ \ep_0 \psi/h} T_{geo}(h) \phi_h \|_{L^2} = O(1)  \| \tilde{\chi}_{in} e^{ \psi / h} T_{hol}(h) \phi_h \|_{L^2} + O(e^{-C/h}) = O(1).$$ 
Consequently, the $h$-microlocal bound \eqref{geobound} follows.

After possibly shrinking $\ep_0>0$ further, we know that  by the long range bound in \eqref{lr2}, 
$$ \| e^{\ep_0 \psi/h} (1-\chi_{in}) T_{geo}(h) \phi_h \|_{L^2 \to L^2} = O(1),$$ 
and so, Theorem \ref{mainthm2} follows.
\end{proof}
\bigskip

\subsection{Refinement of the weight function: Proof of Theorem \ref{mainthm3}} \label{refine}
%To bound $C_{\Omega,g}$ in \eqref{secondterm}, we need to have the more precise estimates on $T_{hol}$ in Theorem \ref{mainthm3}. 

\begin{proof}
We first turn to the proof of \eqref{holbound1}. Since the results of Theorem \ref{mainthm3} are $h$-microlocal and, moreover, away from a neighborhood of $\{p=0\}$ we know that any FBI transform applied to $\varphi_h$ is exponentially small we work in a small neighborhood of $\{p=0\}$. In particular, this implies that for $\chi \in \Cc(\re)$ with $\chi \equiv 1$ near 0, and  for fixed arbitrarily small $\ep_0>0$, there exists $c = c(\ep_0)>0$ so that 
$$S_{hol}\chi(\ep_0^{-1}p^2/2)T_{hol}(h)\varphi_h=\varphi_h +O_{L^2}(e^{-c/h}).$$

Thus, we work exclusively  with $T_{hol}(h)$ here and start by reexamining the proof of \eqref{weight4}. The key estimate is \eqref{holomorphic estimate} where we use that $(P(h))^2\varphi_h=O_{H_h^{-k}}(e^{-c/h})$ (here $H_h^{-k}$ is the semiclassical Sobolev space or order $-k$; see for example \cite[Chapter 14]{Zw}). Notice that in order to conclude that $e^{\psi/h}T_{hol}(h)\varphi_h$ is well controlled, we must have good control of the $O(h)$ error term appearing in the right hand side of \eqref{holomorphic estimate}. In particular, we must have control of $e^{\psi/h}T_{hol}(h)\varphi_h$ strictly away from $\{p=0\}$. The long range estimates tell us that we have some exponential decay, however, we do not have any useful control over the constant. Therefore, in order to complete the arguments leading to \eqref{weight4}, we must choose $\psi$ so that 
\begin{equation}
\label{e:constraint}
\begin{gathered}
\psi = 0\text{ on }p^2> 2\ep,\qquad
p^2(\beta(\alpha,\alpha_\xi^*(2i\partial\psi)))\geq cp^2  \text{ with } c>0 \text{ when } p^2 < \ep.
\end{gathered}
\end{equation}
\begin{lem}
\label{l:p2psi}
For all $\delta_0>0$, $0<\gamma<1/2$, there exists $\psi_0\in C^\infty(M_\tau)$ so that $\|\psi_0\|_{C^1}\leq \delta_0$, $\supp \psi_0\subset \{|p|\leq \delta_0\}$,
$$\Re p^2(\beta(\alpha,\alpha_\xi^*(2i\partial\psi_0)))\geq \frac{1-2\gamma}{2}p^2,$$
and in a neighborhood of $p=0$, 
$$\psi_0=\frac{\gamma}{2|dp|_{g_s}^2}$$
where $g_s$ is the Sasaki metric on $TM$ \red{(see e.g.~\cite[Chapter 9]{BlairSasaki})}.
\end{lem}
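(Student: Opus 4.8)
\emph{Plan of the proof.} I would construct $\psi_0$ as a cut-off of the model weight $\gamma a_0 p^2$, with $a_0=\frac{1}{2|dp|_{g_s}^2}$ as in \eqref{holbound1}, and establish the inequality by expanding the Toeplitz multiplier of \eqref{symbolupshot}. Since $P(h)$ has simple characteristics, $dp\neq 0$ on $p^{-1}(0)$, so $a_0$ is a bounded real-analytic function on $\{|p|<\delta'\}$ for some $\delta'>0$. Fix a cutoff $\chi\in\Cc(\re)$ with $0\le\chi\le1$, $\chi\equiv1$ near $0$, $\supp\chi\subset[-1,1]$, chosen to be a nonincreasing function of $|\cdot|$; for a small parameter $s>0$ to be fixed last, set $\psi_0:=\gamma\, a_0\, p^2\,\chi(p/s)$, extended by $0$. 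Then $\psi_0\in C^\infty(M_\tau)$, $\supp\psi_0\subset\{|p|\le s\}$, $\psi_0=\gamma a_0 p^2$ on $\{|p|\le s/2\}$, and since $|t\chi'(t)|\lesssim1$, a direct estimate of $\psi_0$ and $d\psi_0$ on $\supp\psi_0$ yields $\|\psi_0\|_{C^1}\le C_1 s$ with $C_1$ depending only on $\|a_0\|_{C^1}$, $\|dp\|_{L^\infty}$ and $\chi$ — in particular independent of $s$.

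The crux is the expansion of $\Re\, p^2\!\left(\beta(\alpha,\alpha_\xi^*(2i\partial\psi_0))\right)$ near $p^{-1}(0)$; by \eqref{symbolupshot} applied to $P(h)^2$, this is exactly the Toeplitz multiplier to be bounded below. Write $H(p)=p^2\chi(p/s)$, so $\psi_0=\gamma a_0 H(p)$ and $H'(p)=O(|p|)$ uniformly in $s$. On $\supp\psi_0$ one has $\partial\psi_0=\gamma a_0 H'(p)\,\partial p+O(p^2)$, hence the second argument of $\beta$ is $v:=\alpha_\xi^*(2i\partial\psi_0)=2i\gamma a_0H'(p)\,\alpha_\xi^*(\partial p)+O(p^2)$, with $v=O(|p|)$. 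Substituting into the linear part of \eqref{twist} and Taylor expanding the holomorphic continuation of $p$ at the real point $\alpha$,
$$p\!\left(\beta(\alpha,v)\right)=p+\left(i\partial_{\alpha_\xi}p-\partial_{\alpha_x}p\right)\!\cdot v+O(|v|^2)=p+2i\gamma a_0H'(p)\left(i\partial_{\alpha_\xi}p-\partial_{\alpha_x}p\right)\!\cdot\alpha_\xi^*(\partial p)+O(p^2).$$
Computing in complexified geodesic normal coordinates at $\alpha_x$, in which the adapted complex structure is standard so that $\alpha_\xi^*(\partial p)=\tfrac12(\partial_{\alpha_\xi}p-i\partial_{\alpha_x}p)$, one finds $\left(i\partial_{\alpha_\xi}p-\partial_{\alpha_x}p\right)\cdot\alpha_\xi^*(\partial p)=\tfrac{i}{2}\big(|\partial_{\alpha_x}p|^2+|\partial_{\alpha_\xi}p|^2\big)=\tfrac{i}{2}|dp|_{g_s}^2$; since $a_0|dp|_{g_s}^2=\tfrac12$, the display collapses to
$$p\!\left(\beta(\alpha,\alpha_\xi^*(2i\partial\psi_0))\right)=p-\tfrac{\gamma}{2}H'(p)+O(p^2)=p\Big(1-\gamma\chi(p/s)-\tfrac{\gamma}{2s}\,p\,\chi'(p/s)\Big)+O(p^2),$$
with the $O(p^2)$ remainder uniform in $s$. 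The factor in parentheses is real, so squaring and taking real parts gives $\Re\, p^2\!\left(\beta(\alpha,\alpha_\xi^*(2i\partial\psi_0))\right)=p^2\big(1-\gamma\chi(p/s)-\tfrac{\gamma}{2s}\,p\,\chi'(p/s)\big)^2+O(|p|^3)$, again with error uniform in $s$.

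To finish: $\chi$ being nonincreasing in $|\cdot|$ forces $p\,\chi'(p/s)\le0$, so from $0\le\chi\le1$ and $0<\gamma<1/2$ the factor in parentheses is $\ge1-\gamma>0$; hence the leading term is $\ge(1-\gamma)^2p^2$. As $(1-\gamma)^2-\tfrac{1-2\gamma}{2}=(\gamma-\tfrac12)^2+\tfrac14\ge\tfrac14$ and the remainder is $\le C|p|\cdot p^2$ with $C$ independent of $s$, choosing $s$ so small that $Cs\le\tfrac14$ yields $\Re\, p^2\!\left(\beta(\alpha,\alpha_\xi^*(2i\partial\psi_0))\right)\ge\tfrac{1-2\gamma}{2}p^2$ on $\{|p|\le s\}$; off $\supp\psi_0$ one has $\partial\psi_0=0$, so $\beta(\alpha,0)=\alpha$ and the inequality is immediate there. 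Shrinking $s$ further so that $C_1 s\le\delta_0$ and $s\le\delta_0$ gives all the stated properties, with $\psi_0=\gamma a_0 p^2$ on a neighborhood of $p^{-1}(0)$. The main obstacle is the middle step: carrying out invariantly the identification of the leading coefficient with $\tfrac12\, a_0|dp|_{g_s}^2$ — which rests on the adapted complex structure of $M_\tau^\C$ (and on the fact that it is the Sasaki metric that appears), on the relation between $\partial$ on $M_\tau^\C$ and ordinary derivatives, and on the linear part of \eqref{twist} — while checking that every error produced by the cutoff and by the higher-order terms of \eqref{twist} is genuinely $O(p^2)$ with constant independent of the cutoff scale $s$, since only this uniformity lets the $O(|p|^3)$ remainder be absorbed into the fixed gap $(1-\gamma)^2-\tfrac{1-2\gamma}{2}\ge\tfrac14$.
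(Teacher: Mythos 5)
Your proposal is correct and follows essentially the same route as the paper: the same ansatz $\psi_0=\gamma a_0p^2\,\chi(\cdot)$ with $a_0=\tfrac{1}{2|dp|_{g_s}^2}$, the same first-order Taylor expansion via \eqref{twist} in geodesic normal coordinates leading to the cancellation $a_0|dp|_{g_s}^2=\tfrac12$, and the same sign argument on $\chi'$ to control the cutoff terms. The only (harmless) variation is that you expand $p\circ\beta$ and then square, obtaining the leading factor $(1-\gamma)^2$ with a $\gamma$-uniform gap over $\tfrac{1-2\gamma}{2}$, whereas the paper expands $p^2\circ\beta$ directly and gets the factor $1-2\gamma$; both absorb the $O(p^3)$ remainder by shrinking the support.
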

\begin{proof}

Motivated by the construction of the weight in Theorem \ref{mainthm2}, we let $\chi\in C_c^\infty(\re)$ with $\chi \equiv 1$ on $[-1,1]$, $\supp \chi \subset [-2,2]$, $0\leq \chi\leq 1$, $\chi'(x)\leq 0$ on $x\geq 0$, and make the ansatz
\begin{equation} \label{ansatz1}
\psi_0=\chi(\delta^{-1}p^2)a p^2. \end{equation}
Note that throughout this proof, all $O(\cdot)$ statements are uniform in $\delta$. 

By \eqref{twist},
$$\beta_x(\alpha,\alpha_\xi^*)=\alpha_x-\alpha_\xi^*+O_{}(|\alpha_\xi^*|^2),\quad \beta_\xi(\alpha,\alpha_\xi^*)=\alpha_\xi+i\alpha_\xi^*+O_{}(|\alpha_\xi^*|^2).$$
We work in geodesic normal coordinates centered at $\alpha_x$ (i.e. $\alpha_z=\exp_{\alpha_x}(-i\alpha_\xi)$) so that $\partial\psi_0=\frac{1}{2}(\partial_{\alpha_x}\psi_0+i\partial_{\alpha_\xi}\psi_0)(d\alpha_x-id\alpha_\xi)$. Therefore, 
$$\alpha_\xi^*(2i\partial\psi_0)=\partial_{\alpha_x}\psi_0+i\partial_{\alpha_\xi}\psi_0.$$
Now, 
$$\partial \psi_0 =2 p  \, \partial p \, a(\chi(\delta^{-1}p^2)+\delta^{-1}p^2 \chi'(\delta^{-1}p^2)) + p^2 \chi(\delta^{-1}p^2) \, \partial a.$$
In particular, then 
\begin{equation*}
\begin{aligned} p^2(\beta(\alpha,\alpha_\xi^*(2i\partial\psi_0)))&=p^2 - 2p(\partial_{\alpha_x} p-i\partial_{\alpha_\xi} p)(\partial_{\alpha_x}\psi_0 +i\partial_{\alpha_\xi} \psi_0)+O(|\partial\psi_0|^2)\\
&=p^2 \cdot \Big( 1-4|dp|_{g_s}^2a(\chi(\delta^{-1}p^2)+\delta^{-1}p^2\chi'(\delta^{-1}p^2))+O(p^{-1}|\partial\psi_0|^2) \Big)
\end{aligned}
\end{equation*}
Therefore, choosing 
$$a=\frac{\gamma}{2|dp|_{g_s}^2},$$
proves the lemma since $\chi'(x)\leq 0$ on $x\geq 0$ implies
$$1-4|dp|_{g_s}^2a(\chi(\delta^{-1}p^2)+\delta^{-1}p^2\chi'(\delta^{-1}p^2))\geq 1-2\gamma(\chi(\delta^{-1}p^2)+\delta^{-1}p^2\chi' (\delta^{-1}p^2))\geq 1-2\gamma$$
and 
$$p^{-1}|\partial \psi_0|^2=p^{-1}(O(\delta^{-2}p^6+p^2))=O(p).$$
\end{proof}
\noindent Using $\psi_0$ from Lemma \ref{l:p2psi} in the analysis leading to \eqref{weight4} proves \eqref{holbound1}.

Next, we prove the bound in Theorem \ref{mainthm3} \eqref{holbound2}.
The fact that $(P(h))^2\varphi_h=O_{H_h^{-k}}(e^{-c/h})$ is a weaker condition than $P(h)\varphi_h=O_{L^2}(e^{-c/h})$ and indeed, the example of $e^{ix/h}$ on $S^1$ shows that the weight $\psi_0$ is not quite optimal. To remedy this, and obtain \eqref{hol bound} we need to work directly with $P(h)\varphi_h=O_{L^2}(e^{-c/h})$.  Unlike $p^2$, $p$ does not have a fixed sign, so we need to work separately on $p>0$ and $p<0$. Therefore, we construct slightly different weights on $p>0$ and $p<0$. Since the weighted estimate naturally localizes to each region, we consider first the case $p>0$ and then easily adapt the argument to the case $p<0.$

\begin{lem}
\label{l:weight}
For all $\delta_0>0$, there exists $\psi_+\in C^\infty(M_\tau)$ with $\|\psi_+\|_{C^1}<\delta_0$, $\supp \psi_+\subset \{|p|\leq \delta_0\}$ so that 
$$\Re p(\beta(\alpha,\alpha_{\xi}^*(2i\partial \psi_+)))\geq p^2\quad \text{ on }p\geq 0$$
and in a neighborhood of $p=0$, 
$$\psi_+=\frac{p^2}{2|dp|_{g_s}^2}+O(p^3)$$
where $g_s$ is the Sasaki metric on $TM$.
\end{lem}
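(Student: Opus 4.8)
The plan is to mimic the construction of $\psi_0$ in Lemma~\ref{l:p2psi}, but working directly with $p$ rather than $p^2$ on the half-space $\{p\geq 0\}$. First I would make the ansatz
$$\psi_+=\chi(\delta^{-1}p^2)\,b\,p^2$$
for a scalar function $b$ to be chosen and $\chi\in C_c^\infty(\re)$ exactly as in Lemma~\ref{l:p2psi} (so $\chi\equiv 1$ on $[-1,1]$, $\supp\chi\subset[-2,2]$, $0\leq\chi\leq 1$, $\chi'(x)\leq 0$ on $x\geq 0$). Note that using $p^2$ rather than $p$ inside the ansatz keeps $\psi_+$ smooth across $\{p=0\}$, which is essential since $p$ changes sign. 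The conditions $\|\psi_+\|_{C^1}<\delta_0$ and $\supp\psi_+\subset\{|p|\leq\delta_0\}$ are then arranged exactly as before by taking $\delta$ small; all $O(\cdot)$ below are uniform in $\delta$.

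Next I would compute $\Re p(\beta(\alpha,\alpha_\xi^*(2i\partial\psi_+)))$ using \eqref{twist}, just as in the proof of Lemma~\ref{l:p2psi}. Working in geodesic normal coordinates centered at $\alpha_x$ so that $\alpha_\xi^*(2i\partial\psi_+)=\partial_{\alpha_x}\psi_++i\partial_{\alpha_\xi}\psi_+$, and using
$$\partial\psi_+=2p\,\partial p\,b\big(\chi(\delta^{-1}p^2)+\delta^{-1}p^2\chi'(\delta^{-1}p^2)\big)+p^2\chi(\delta^{-1}p^2)\,\partial b,$$
a first-order Taylor expansion of $p$ around $\beta=\alpha$ gives
$$p(\beta(\alpha,\alpha_\xi^*(2i\partial\psi_+)))=p-(\partial_{\alpha_x}p-i\partial_{\alpha_\xi}p)(\partial_{\alpha_x}\psi_++i\partial_{\alpha_\xi}\psi_+)+O(|\partial\psi_+|^2).$$
Taking real parts and using $\Re\big[(\partial_{\alpha_x}p-i\partial_{\alpha_\xi}p)(\partial_{\alpha_x}p+i\partial_{\alpha_\xi}p)\big]=|dp|_{g_s}^2$, this becomes
$$\Re p(\beta(\alpha,\alpha_\xi^*(2i\partial\psi_+)))=p-2p\,|dp|_{g_s}^2\,b\big(\chi(\delta^{-1}p^2)+\delta^{-1}p^2\chi'(\delta^{-1}p^2)\big)+O(|\partial b|\,p^2)+O(|\partial\psi_+|^2).$$
The idea is to choose $b$ so that the first two terms collapse to something $\geq p^2$ on $\{p\geq 0\}$; since we want the bracket to kill the linear-in-$p$ term and leave a quadratic remainder, I would take
$$b=\frac{1}{2|dp|_{g_s}^2}\cdot\frac{1}{1+\text{(correction)}}$$
or, more simply, solve for $b$ directly from the requirement $p-2p\,|dp|_{g_s}^2\,b\cdot(\text{bracket})=p^2$ wherever $\chi\equiv 1$. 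Near $p=0$ the bracket equals $1$, so this forces $b=\frac{1-p}{2|dp|_{g_s}^2}=\frac{1}{2|dp|_{g_s}^2}+O(p)$, hence $\psi_+=bp^2=\frac{p^2}{2|dp|_{g_s}^2}+O(p^3)$ as claimed. Away from $p=0$ (where $\chi$ transitions), the sign condition $\chi'(x)\leq 0$ on $x\geq 0$ guarantees the bracket $\chi(\delta^{-1}p^2)+\delta^{-1}p^2\chi'(\delta^{-1}p^2)\leq 1$, so choosing $b\leq\frac{1}{2|dp|_{g_s}^2}$ there keeps $2p|dp|_{g_s}^2 b\cdot(\text{bracket})\leq p$ on $p\geq 0$, preserving the lower bound $\Re p(\beta)\geq p^2\geq 0$. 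Finally I would check the error terms: $p^{-2}|\partial\psi_+|^2=O(\delta^{-2}p^4+1)=O(1)$ is too weak near $p=0$, so more care is needed — in fact $|\partial\psi_+|^2=O(p^2)$ near $p=0$ since $\partial\psi_+=O(p)$ there, so after dividing by the leading $p$ we get an $O(p)$ error, absorbable into the $p^2$ margin once $\delta$ is small. Similarly $|\partial b|\,p^2=O(p^2)\cdot O(1)$ on the support, again absorbable.

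\textbf{Main obstacle.} The delicate point, as in Lemma~\ref{l:p2psi}, is the bookkeeping of error terms near $\{p=0\}$ relative to the gain $p^2$ we are trying to produce: we are asking for a lower bound $\Re p(\beta)\geq p^2$, which near $p=0$ is a very tight margin (quadratically small), and every Taylor-remainder term must be shown to be $o(p^2)$ or of the same sign. Unlike the $p^2$-weight case, here the leading term of the expansion is the linear term $p$, and the whole construction hinges on canceling it exactly against $2p|dp|_{g_s}^2 b$; any imprecision in that cancellation destroys the estimate. Getting $b$ right — so that the cancellation is exact where $\chi\equiv 1$ and only degrades in the favorable direction where $\chi'<0$ — is the crux. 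The symmetric case $p\leq 0$ is then handled by an essentially identical construction producing $\psi_-$ with $\Re\,(-p)(\beta(\alpha,\alpha_\xi^*(2i\partial\psi_-)))\geq p^2$ on $p\leq 0$ (equivalently, flipping a sign in the ansatz), and since both weights are supported in $\{|p|\leq\delta_0\}$ and the subsequent weighted $L^2$ estimate localizes to the respective regions $\{p>0\}$ and $\{p<0\}$, they can be glued to yield \eqref{holbound2} with $\psi_{hol}$ agreeing with $p^2 a_0$ to leading order and error $O(p^3)$.
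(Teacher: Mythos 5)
Your overall strategy is the same as the paper's: a weight quadratic in $p$ with leading coefficient $\frac{1}{2|dp|_{g_s}^2}$ chosen to cancel the linear term in the Taylor expansion of $p\circ\beta$, a cutoff whose derivative has the favorable sign on $p\geq 0$, and a mirror construction for $p\leq 0$. However, there is a genuine gap in the error bookkeeping at exactly the point you yourself flag as the crux. After arranging the exact cancellation $p-2p|dp|_{g_s}^2b=p^2$ via $b=\frac{1-p}{2|dp|_{g_s}^2}$, what remains is
$$\Re p(\beta(\alpha,\alpha_\xi^*(2i\partial\psi_+)))=p^2+E,\qquad E=O(|\partial\psi_+|^2)+O(p^2|\partial b|)+\cdots$$
Since $\partial\psi_+=2pb\,\partial p+O(p^2)$ near $p=0$, the remainder $E$ is of size $Cp^2$ where $C$ is a \emph{fixed, order-one, sign-indefinite} constant (it involves $|\partial p|^2/|dp|_{g_s}^4$, second derivatives of $p$ and of $\beta$, and $\langle\partial p,\partial b\rangle$). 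Shrinking $\delta$ only shrinks the range $|p|\lesssim\sqrt{\delta}$; it does not shrink the coefficient of $p^2$. So your claim that the error is "absorbable into the $p^2$ margin once $\delta$ is small" fails: the margin is exactly zero, and $\Re p(\beta)\geq p^2$ (indeed, even $\geq 0$) does not follow. Contrast this with Lemma~\ref{l:p2psi}, where the analogous error is $O(p)$ inside a bracket whose leading term is the fixed positive constant $1-2\gamma$; there a genuine margin exists, which is why that argument does close as written.

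The paper's resolution is to take $\psi_+=\chi(\delta^{-1}p)(a_0p^2+a_1p^3)$ with $a_0=\frac{1}{2|dp|_{g_s}^2}$ and $a_1=-N$ for $N$ \emph{large}. After the linear cancellation $1-2|dp|_{g_s}^2\chi a_0=1-\chi$, the cubic term contributes $+2|dp|_{g_s}^2N\chi\,p^2$ to the expansion, which dominates every indefinite $O(p^2)$ error once $N$ is large; on the transition region where $\chi\leq\frac12$ one instead uses $\Re p(\beta)\geq\frac14 p\geq p^2$ for $\delta$ small. Since the cubic term does not disturb the required normalization $\psi_+=\frac{p^2}{2|dp|_{g_s}^2}+O(p^3)$, this closes the argument. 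Your construction is repaired the same way by replacing $b=\frac{1-p}{2|dp|_{g_s}^2}$ with $b=\frac{1-Np}{2|dp|_{g_s}^2}$ and choosing $N$ large before $\delta$ small; as written, with $N$ effectively equal to $1$, the estimate is not established.
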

\begin{proof}
First, let $\chi\in \Cc(\re)$ with $\chi \equiv 1$ on $[-1,1]$, $\supp \chi\subset [-2,2]$, and $\chi'(x)\leq 0$ on $x\geq 0$. Then fix $\delta>0$ to be chosen small enough later and let
$$\psi_+=\chi(\delta^{-1}p)(a_0p^2+a_1p^3),\quad a_i\in C^\infty(M_\tau).$$
Note that throughout this proof, all $O(\cdot)$ statements are uniform in $\delta$.

Computing as in the proof of Lemma \ref{l:p2psi}, 
\begin{gather*}
\beta_x(\alpha,\alpha_\xi^*(2i\partial\psi))=\alpha_x-(\partial_{\alpha_x}\psi_++i\partial_{\alpha_\xi}\psi_0)+O(|d\psi|^2),\\
\beta_\xi(\alpha,\alpha_\xi^*(2i\partial\psi))=\alpha_\xi+i(\partial_{\alpha_x}\psi_++i\partial_{\alpha_\xi}\psi_0)+O(|d\psi|^2).
\end{gather*}
For convenience, let 
$$\partial_{\bar{\alpha}_z}=\partial_{\alpha_x}+i\partial_{\alpha_\xi},\quad \partial_{\alpha_z}=\partial_{\alpha_x}-i\partial_{\alpha_\xi}.$$
Then, 
$$\partial_{\bar{\alpha}_z}\psi_+=\chi(\delta^{-1}p)(2a_0p\partial_{\bar{\alpha}_z}p+a_13p^2\partial_{\bar{\alpha}_z}p+p^2\partial_{\bar{\alpha}_z}a_0+O(p^3))+\delta^{-1}\chi'(\delta^{-1}p)(\partial _{\bar{\alpha}_z}p)(a_0p^2+O(p^3))$$
Then, Taylor expansion of $p\circ\beta(\alpha,\alpha_\xi^*(2i\partial\psi_+))$ around $\alpha$ gives 
\begin{align*}
p(\beta(\alpha,\alpha_\xi^*(2i\partial\psi_+)))&=p\left[\begin{aligned}&1-2|dp|_g^2\big(\delta^{-1}\chi'(\delta^{-1}p)a_0p+\chi(\delta^{-1}p)(a_0+3pa_1)\big)\\
&\qquad\qquad\qquad -\lan \partial_{\alpha_z}p,\chi(\delta^{-1}p)p\partial_{\bar{\alpha}_z}a_0\ran+\tilde{E}\end{aligned}\right]
\end{align*}
where 
$$|\tilde{E}|\leq \delta^{-2}p^3|\chi'(\delta^{-1}p)|^2+|p|\chi^2(\delta^{-1}p)+p^2|\chi(\delta^{-1}p)|+\delta^{-1}p^2|\chi'(\delta^{-1}p)| =O(p).$$
Grouping terms by homogeneity in $p$, we have then 
\begin{align*}
&p(\beta(\alpha,\alpha_\xi^*(2i\partial\psi_+)))\\
&=p\left[\begin{aligned}&1-2|dp|_{g_s}^2\chi(\delta^{-1}p)a_0 \\
&\qquad\qquad\qquad-p\big(2|dp|_{g_s}^2[\chi(\delta^{-1}p)a_1+\delta^{-1}\chi'(\delta^{-1}p)a_0]-\lan \partial_{\alpha_z}p,\chi(\delta^{-1}p)\partial_{\bar{\alpha}_z}a_0\ran\big)+\tilde{E}\end{aligned}\right]
\end{align*}

Now, let ${\red{N}}>0$ to be chosen large enough independently of $\delta$ later
$$a_0:=\frac{1}{2|dp|_{g_s}^2},\qquad a_1=-{\red{N}}.$$
Then, notice that on $p\geq 0$, $\chi'(\delta^{-1}p)\leq 0$, so, since $\chi\geq 0$,
\begin{align*}
&\Re p(\beta(\alpha,\alpha_\xi^*(2i\partial\psi_+)))\\
&\quad\geq p\left[1-\chi(\delta^{-1}p)+p\big(2|dp|_g^2\chi(\delta^{-1}p)\red{N}-\lan \partial_{\alpha_z}p,\chi(\delta^{-1}p)\partial_{\bar{\alpha}_z}a_0\ran\big)+O(p)\right]
\end{align*}
Therefore, on $\{\chi(\delta^{-1}p)\geq \frac{1}{2}\}\cap\{p\geq 0\}$, 
\begin{align*}
&\Re p(\beta(\alpha,\alpha_\xi^*(2i\partial\psi_+)))\geq p\left[p\big(|dp|_g^2\red{N}-\lan \partial_{\alpha_z}p,\chi(\delta^{-1}p)\partial_{\bar{\alpha}_z}a_0\ran\big)+O(p)\right]\geq \frac{{\red{N}}}{2}p^2
\end{align*}
for ${\red{N}}$ large enough.
On the other hand, on $\supp \chi(\delta^{-1}p)\cap \{\chi(\delta^{-1}p)\leq \frac{1}{2}\}\cap\{p\geq0\}$, 
\begin{align*}
\Re p(\beta(\alpha,\alpha_\xi^*(2i\partial\psi_+)))
\geq p\left[\frac{1}{2}+O(p)\right]\geq \frac{1}{4}p
\end{align*}
if $\delta>0$ is chosen small enough.

This completes the proof of the lemma since 
$$|\psi_+|\leq C\delta^2,\qquad |\partial\psi_+|\leq C\delta.$$
\end{proof}

\begin{rem}
\label{r1}
It is possible to construct $\psi_+$ satisfying the following:
For every ${\red{N}}>0$, ${\red{M_0}}>0$, and $\delta_0>0$, there exists $\psi_+\in C^\infty(M_\tau)$ with $\|\psi_+\|_{C^1}\leq \delta_0$, and $\supp\psi_+\subset \{|p|\leq \delta_0\}$ so that 
$$\Re p(\beta(\alpha,\alpha_\xi^*(2i\partial \psi_+)))\geq \frac{1}{2}p^{\red{N}} +h{\red{M_0}},\quad \text{ on }p\geq 0,$$
and in a neighborhood of $p=0$, 
$$\psi_+=\sum_{j=0}^{{\red{N}}-1}a_jp^2-Ch{\red{M_0}}p,\quad a_0=\frac{1}{2|dp|_{g_s}^2}.$$
Moreover, for any $j<{\red{N}}-1$ and $\delta>0$, there exist $b\in C^\infty(M_\tau)$ with $\|b\|_{C^1}<\delta$ so that if $a_j$ is replaced by $a_j+b,$ then
$$\inf_{0\leq p<\delta_0}\Re p(\beta(\alpha,\alpha_\xi^*(2i\partial \psi_+)))<0.$$
This allows one to improve the higher order terms in $\psi_{hol}$ in Theorem \ref{mainthm3} so that they are sharp for $e^{ix/h}$ modulo $p^{\red{N}}$ for any ${\red{N}}$.
\end{rem}

\begin{figure}
\begin{center}
\includegraphics{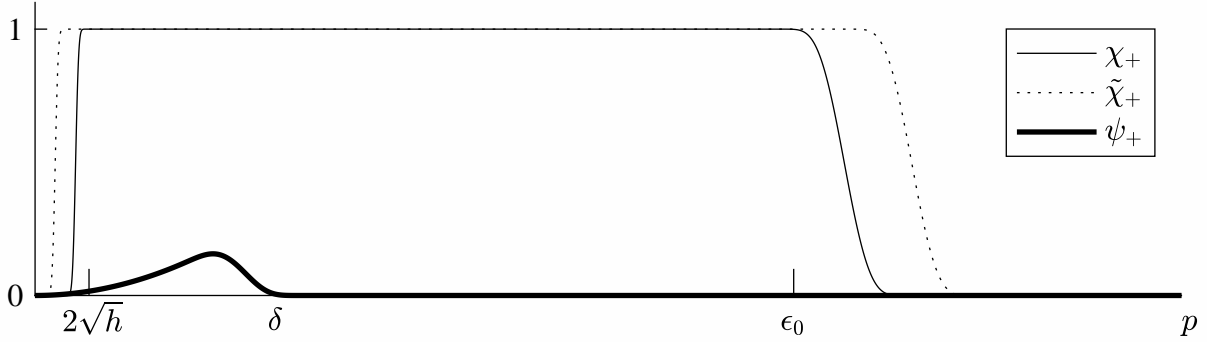}
\end{center}
\caption{We show the various cutoffs used in the proof of Theorem \ref{mainthm3}. }
\end{figure}

Since  we want to localize to $p>0$, we insert a smooth cutoff $\chi_+$ that approximates the indicator function ${\bf 1}_{[0, \e_0]}(p)$. However, in order to estimate error terms in the weighted $L^2$ bounds corresponding to Proposition \ref{weightedl2}, we must ensure that $\psi_+ = O(h)$ on supp $\partial \chi_+$ so that, in particular,
$$ \| e^{\psi_+/h} T_{hol}(h) \phi_h \|_{L^2( \text{supp }  \partial \chi_+ )} = O(1),$$ where  $\supp \partial \chi_{+} \subset \{0 \leq p \leq h^{1/2} \}\cup \{p \geq \ep_0 \}.$

To construct $\chi_+$, we let $\chi_1\in C^\infty(\re)$ with $\supp \chi_1\subset (1,\infty)$ with $ \chi_1\equiv 1$ on $(2,\infty)$ and let 
$$\chi_{+}:=\chi(\ep_0^{-1}p)\chi_1( h^{-1/2}p),$$  
and 
$\tilde{\chi}_{+}=\tilde{\chi}(\ep_0^{-1}p^2/2)\tilde{\chi}_1(h^{-1/2}p)$ where $\tilde{\chi}\in \Cc(\re)$ has $\tilde{\chi}\equiv 1$ on $\supp \chi$ and $\tilde{\chi}_1\in C^\infty(\re)$ with $\tilde{\chi}_1\equiv 1$ on $\supp \chi_1$ and $\supp \tilde{\chi}_1\subset (1,\infty)$. Then $\supp \chi_+\subset \{0\leq p\leq 2\e_0\}$ so that $\psi_+=O(h)$  on $\supp \partial \chi_+$.

We will need the following 
\begin{lem} \label{refinedestimate}
Under the same assumptions as in Proposition \ref{weightedl2},
\begin{multline} \label{holomorphic estimate-}
 \langle \chi_{+} e^{\psi_{+}/h} T_{hol}(h) P(h) \phi_h, e^{\psi_{\pm}/h} T_{hol}(h) \phi_h \rangle _{L^2}  = \langle \chi_{+}e^{\psi_{\pm}/h}  (p\circ \beta)|_{\Lambda} T_{hol}(h)  \phi_h, e^{\psi_{+}/h} T_{hol}(h) \phi_h \rangle_{L^2}  \\
 +  O(h)  \| \tilde{\chi}_{+} e^{\psi_{+}/h} T_{hol}(h)  \phi_h \|_{L^2}^2+O(h^{1/2})\| \tilde{\chi}_{+} e^{\psi_{+}/h} T_{hol}(h)  \phi_h \|_{L^2(\{|p|\leq Ch^{1/2}\})}^2 + O(e^{-C/h}) \end{multline}
 \end{lem}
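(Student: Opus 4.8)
The plan is to run the proof of Proposition~\ref{weightedl2} essentially verbatim, with three changes: work with $P(h)$ instead of $P^2(h)$, so that the intertwining construction produces the multiplier $q_0=p_0\circ\beta$ of~\eqref{toeplitzmultiplier} rather than its square, whose restriction to $\Lambda$ is exactly the $(p\circ\beta)|_\Lambda$ appearing in~\eqref{holomorphic estimate-}; replace the fixed Grauert-tube cutoff $\chi_{in}$ by the $h$-dependent cutoff $\chi_+=\chi(\e_0^{-1}p)\chi_1(h^{-1/2}p)$, which is permissible since $\supp\chi_+\subset\{|p|\le 2\e_0\}\subset M_\tau^\C$; and carefully track the new error produced by the factor $\chi_1(h^{-1/2}p)$.

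First I would choose $\chi_{in}$ with $\chi_{in}\chi_+=\chi_+$ and apply~\eqref{intertwining}, now with $P(h)$ in place of $P^2(h)$, so that by~\eqref{intertwine} with $\tilde p_0=p_0$ the intertwining operator $Q(h)$ is $h$-microlocally analytic on $M_\tau^\C$ with principal symbol $q_0(\alpha,\alpha^*)=p_0(\beta_x(\alpha,\alpha_\xi^*),\beta_\xi(\alpha,\alpha_\xi^*))$ as in~\eqref{toeplitzmultiplier}, and the $O(h^\infty)$-error is controlled by $\|\tilde\chi_{in}T_{hol}(h)\phi_h\|_{L^2}$. Conjugating by $e^{\psi_+/h}$ and deforming $\alpha^*\mapsto\alpha^*+i\Psi_+(\alpha,\beta)$ as in~\eqref{SK} and the lines following it --- legitimate because $q$ is analytic, $\|\psi_+\|_{C^1}<\delta_0$ is small by Lemma~\ref{l:weight}, and $\chi_+(\alpha)$ sits outside the $\alpha^*$-integral --- produces $Q_{\psi_+}(h):=\chi_+e^{\psi_+/h}Q(h)e^{-\psi_+/h}$ with symbol $\sim\sum_j\chi_+(\alpha)q_j(\alpha,\alpha^*+id_\alpha\psi_+)h^j$. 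The only nonstandard feature is that $\partial_\alpha^k\chi_1(h^{-1/2}p)=O(h^{-k/2})$ is supported in the thin shell $\{h^{1/2}\le|p|\le 2h^{1/2}\}$; thus $Q_{\psi_+}(h)$ lies in an $S_{1/2}$-type calculus whose composition and commutator remainders are governed by powers of $h^{1/2}$, but with every $O(h^{1/2})$ (and smaller) correction microsupported in that shell, while away from it the symbol is $h$-independent near the diagonal, so commutators are $O(h)$ exactly as in Proposition~\ref{weightedl2}.

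Next I would reproduce the Taylor expansion~\eqref{taylor}: expanding $q_0^{\psi_+}(\alpha,\alpha^*)=q_0(\alpha,\alpha^*+id_\alpha\psi_+)$ around $\alpha_z^*=i\partial_{\alpha_z}\tilde\psi_+$, $\overline{\alpha}_z^*=-i\overline{\partial}_{\alpha_z}\tilde\psi_+$ with $\tilde\psi_+=\psi_+-\rho$, one gets $q_0^{\psi_+}=q_0|_\Lambda+r_1(\overline{\alpha}_z^*+i\overline{\partial}_{\alpha_z}\tilde\psi_+)+r_2(\alpha_z^*-i\partial_{\alpha_z}\tilde\psi_+)$ with $r_1,r_2\in S^0(1)$ and $q_0|_\Lambda=q_0(\alpha,2i\partial\psi_+-i(\partial-\dbar)\rho)=(p\circ\beta)|_\Lambda$ by~\eqref{restricted values}. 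Quantizing and applying to $e^{\psi_+/h}T_{hol}(h)\phi_h$, the invariance identities~\eqref{invariance} and~\eqref{invariance2} make the factors $[h\overline{D}_{\alpha_z}+i\overline{\partial}_{\alpha_z}\tilde\psi_+]e^{\psi_+/h}T_{hol}(h)\phi_h$ equal to $O(e^{-C/h})$, so the $r_1,r_2$ terms survive only through commutators, which by the calculus above are $O(h)$ in the bulk plus $O(h^{1/2})$ localized to $\{|p|\le Ch^{1/2}\}$. Using pseudolocality of $Q_{\psi_+}(h)$ I then insert $\tilde\chi_+$ (which equals $1$ near the microsupport of $Q_{\psi_+}(h)$) around the remaining $e^{\psi_+/h}T_{hol}(h)\phi_h$ factors modulo $O(e^{-C/h})$, the shell-supported term getting the additional cutoff to $\{|p|\le Ch^{1/2}\}$; here the hypothesis $\psi_+=O(h)$ on $\supp\partial\chi_+$ is used, so that $e^{\psi_+/h}=O(1)$ there. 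Collecting the terms yields~\eqref{holomorphic estimate-}, and the case $p\le 0$ with $\psi_-$ and $\chi_-$ is handled identically.

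The hard part will be purely bookkeeping around the $h$-dependent cutoff $\chi_1(h^{-1/2}p)$: one must check that it still defines an admissible analytic symbol and does not disturb the intertwining or contour-deformation steps (it does not, since it multiplies the oscillatory integral from outside), and one must split every commutator remainder cleanly into an $O(h)$ bulk contribution and an $O(h^{1/2})$ contribution microsupported in $\{|p|\sim h^{1/2}\}$. The latter is precisely the extra term in~\eqref{holomorphic estimate-}, and keeping it separate is exactly what is needed later to prove~\eqref{holbound2} without losing the sharp constant.
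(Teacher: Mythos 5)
Your proposal is correct and follows essentially the same route as the paper: run the argument of Proposition~\ref{weightedl2} with $P(h)$ in place of $P^2(h)$, and isolate the only new difficulty in the integration-by-parts/commutator step where the singular cutoff $\chi_+\in S^0_{1/2}(1)$ is differentiated, splitting $\partial_{\alpha_z}\chi_+$ into the $\chi\,\partial_{\alpha_z}\chi_1$ piece (an $O(h^{1/2})$ loss microsupported in $\{|p|\le Ch^{1/2}\}$) and the $\chi_1\,\partial_{\alpha_z}\chi$ piece (an $O(h)$ bulk term). The paper's proof makes the same observations, including that $\chi_+$ composes harmlessly with $Op_h(S^0(1))$ because it is a purely spatial multiplier.
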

 
 \begin{proof} We follow  very closely the argument in the proof of Proposition \ref{weightedl2}. The only difference here occurs in the integration by parts arguments  with respect to $\partial_{\alpha_z}$ and $\dbar_{\alpha_z}$ in \eqref{invariance} and \eqref{invariance2}, precisely when the cutoff $\chi_+$ is differentiated in the amplitude since it is a singular semiclassical symbol with $\chi_+ \in S^{0}_{1/2}(1)$ \cite[Chapter 4]{Zw} \red{ i.e. 
 $$|\partial_{\alpha_z}^\alpha\chi_+|\leq C_{\alpha \beta}h^{-|\alpha|/2}.$$} Specifically, one needs to  estimate a  term of the form
 \begin{align} \label{difference}
&  \lan h \partial_{\alpha_z} ( \chi_{\pm} ) \, r(\alpha,hD_{\alpha} ) [ e^{\psi/h}T_{hol}(h)\varphi_h],  e^{\psi/h}T_{hol}(h) \varphi \ran  \nonumber\\
  &\quad=  \lan h \chi \partial_{\alpha_z} ( \chi_{1} ) \, r(\alpha,hD_{\alpha} ) [ e^{\psi/h}T_{hol}(h)\varphi_h],  e^{\psi/h}T_{hol}(h) \varphi \ran \nonumber \\
  &\qquad+  \lan h \chi_1 \partial_{\alpha_z} ( \chi ) \, r(\alpha,hD_{\alpha} ) [ e^{\psi/h}T_{hol}(h)\varphi_h],  e^{\psi/h}T_{hol}(h) \varphi \ran \quad r \in S^{0}(1).
  \end{align}

  Since $\chi_+ \in S^{0}_{1/2}(1),$ it is in a singular symbol class. However, because it is a multiplier depending only the spatial coordinates $\alpha \in M_{\tau}^{\C}$, it can be effecitvely composed with the standard $h$-pseudodifferential operators $r(\alpha, hD_{\alpha}) \in Op_h (S^{0}(1)).$ In particular, symbols compose with $h^{-j/2}$-loss in the $j^{\text{th}}$ term of the asymptotic expansion, $L^2$-boundedness and sharp G\aa rding still hold, as does $h$-pseudolocality.  More precisely, for any $R(h) \in Op_h (S^{0}(1))$ and spatial cutoff $\chi_+ = \chi_+(\alpha,h) \in S^{0}_{1/2}(1),$ 
  $$ \chi_+ R(h) = \chi_+ R(h) \tilde{\chi}_+  + O(h^{\infty})_{L^2 \to L^2}.$$
   Since the cutoff 
  $ \chi_1 \partial_{\alpha_z}  \chi \in S^{0}_{1/2}(1),$  and depends only on the spatial $\alpha$-variables, it follows that for second term on the RHS of \eqref{difference},
$$  \lan h \chi_1 \partial_{\alpha_z} ( \chi) \, r(\alpha,hD_{\alpha} ) [ e^{\psi/h}T_{hol}(h)\varphi_h],  e^{\psi/h}T_{hol}(h) \varphi \ran  = O(h) \| \tilde{\chi}_{+} e^{\psi_{+}/h} T_{hol}(h)  \phi_h \|_{L^2}.$$

As for the first term on the RHS of \eqref{difference},
$h \chi \partial_{\alpha_z} (\chi_1) \in h^{-\frac{1}{2}} S_{1/2}^{0}(1)$ since there is a loss of $h^{1/2}$ coming from differentiation of the $\chi_1$-term. Thus,
 \begin{multline*} 
  \lan h \chi \partial_{\alpha_z} ( \chi_{1} ) \, r(\alpha,hD_{\alpha} ) [ e^{\psi/h}T_{hol}(h)\varphi_h],  e^{\psi/h}T_{hol}(h) \varphi \ran = \\
  O(h^{1/2})\| \tilde{\chi}_{+} e^{\psi_{+}/h} T_{hol}(h)  \phi_h \|_{L^2(\{|p|\leq Ch^{1/2}\})}^2. \end{multline*}
  %t
\end{proof}

% \begin{align} \label{pseudolocal}
%  \lan h\partial\chi_{\pm} r(\alpha,hD_{\alpha} e^{\psi/h}T_{hol}(h)\varphi_h,e^{\psi/h}T_{hol}(h) \varphi \ran= O(h)  \| \tilde{\chi}_{\pm} e^{\psi_{\pm}/h} T_{hol}(h)  \phi_h \|_{L^2}^2+O(h^{1/2})\| \tilde{\chi}_{\pm} e^{\psi_{\pm}/h} T_{hol}(h)  \phi_h \|_{L^2(\{|p|\leq Ch^{1/2}\})}^2.
%\end{align}

Since
$ P(h) \phi_h = O_{L^2}(e^{-c/h}),$ it follows from Lemma \ref{refine} that for $\ep$ small enough and ${\red{N}}>0,$ 
\begin{multline} \label{weight2-}
 \langle \chi_{+} e^{\psi_{+}/h}  (p\circ \beta)|_{\Lambda} T_{hol}(h)  \phi_h, e^{\psi_{+}/h} T_{hol}(h) \phi_h \rangle_{L^2} =  O(h)  \| \tilde{\chi}_{+} e^{\psi_{+}/h} T_{hol}(h) \phi_h \|_{L^2}^2\\
+O(h^{1/2})\| \tilde{\chi}_{+} e^{\psi_{+}/h} T_{hol}(h)  \phi_h \|_{L^2(0 \leq p \leq {\red{N}}h^{1/2})}^2+O(e^{-c/h}). \end{multline}

In analogy with the arguments used to prove \eqref{weight4}, we substitute the lower bound \newline $(p\circ \beta)|_{\Lambda} \geq p^2$ on $p \geq 0$ from Lemma \ref{l:weight} in (\ref{weight2-}). Then, since $\psi_+ = O(h)$ when $p^2 = O(h)$ (so that $e^{\psi_+/h} = O(1)$ on the latter set), it follows that
 \begin{multline} \label{weight3-}
 \langle \chi_{+} e^{\psi_{+}/h} (c{\red{N}}h+O_{}(h))T_{hol}(h)  \phi_h, e^{\psi_{+}/h} T_{hol}(h) \phi_h \rangle_{L^2(p \geq {\red{N}}h^{1/2})}   \\
 =O(h^{1/2})  \| \tilde{\chi}_{+} T_{hol}(h)\phi_h \|_{L^2 (0 \leq p\leq {\red{N}}h^{1/2})}^2 + O(h)  \| (\tilde{\chi}_{+} - \chi_+)  T_{hol}(h)\phi_h \|_{L^2 (p \geq  {\red{N}}h^{1/2})}^2 + O(e^{-c/h}) \\
 = O(h^{1/2}). \end{multline}

\noindent In \eqref{weight3-} we have also used that $e^{\psi_+(\alpha)/h} = O(1)$ for all $\alpha \in \text{supp} (\tilde{\chi}_+  - \chi_{+}) \cap \{ p > C h^{1/2} \}$ since by construction $\psi_+ = O(h)$ on the latter set (see also Figure 1).

Choosing ${\red{N}}>0$ large enough to absorb the $O(h)$ term on the LHS of \eqref{weight3-} it follows that
\begin{equation} \label{weight4-}
\| \chi_{+} e^{\psi_{+}/h} T_{hol}(h) \phi_h \|_{L^2 }  = O(h^{-1/4}). \end{equation}

The analysis on the set $p<0$ follows in the same way as above, except one uses the reflected cutoff function $\chi_{-}(p) = \chi_+(-p), $ 
and the ansatz for the corresponding weight function is
 \begin{equation} \label{ansatz3}
\psi_{-}= \chi(-\delta^{-1}p)  p^2(a_0+a_1p), \quad  p < 0, \, a_j \in C^{\infty}(M_\tau); j=0,1.\end{equation}
so that 
$$-\Re p(\beta(\alpha,\alpha_{\xi}^*(2i\partial \psi_-)))\geq p^2\qquad p\leq 0.$$
That finishes the proof of Theorem \ref{mainthm3} \eqref{holbound2}.
 \end{proof}

\section{Exponential decay of the harmonic extensions of boundary eigenfunctions: Proof of Theorem \ref{mainthm}} \label{steklovbound}

\begin{proof} 
 \red{Recall that $M:=\partial\Omega$ with $\Omega$ the domain of the Steklov problem.} Given the FBI transform $T(h): C^{\infty}(M) \to C^{\infty}(T^*M)$ we construct a left-parametrix $S(h): C^{\infty}(T^*M) \to C^{\infty}(M)$ and then given $q\in S^{m}(1)$ we define the anti-Wick $h$-pseudodifferential quantization by 
$$q^{aw}_{h}:= S(h) q T(h).$$

Let ${\mathcal P}: C^{\infty}(M) \to C^{\infty}(\Omega)$ be the Poisson operator and $\chi \in C^{\infty}_{0}(T^*M; [0,1])$ be a cutoff supported near the zero section, with $\chi(\red{x',\xi'}) =1$ for  $\{ 0 \leq |\red{\xi'}| \leq \ep \}$ and $\chi(\red{x',\xi'}) =0$ for $|\red{\xi'}| > 2 \ep.$ Here, $\ep>0$ is some arbitrarily small but fixed constant. 
Then, one can write
\begin{align} \label{poisson}
u_h(x) &= {\mathcal P} \phi_h (x_{\red{n+1}},\red{x'}) = {\mathcal P} (1 - \chi_h^{aw}) \phi_h (x) + {\mathcal P} \chi_{h}^{aw} \phi_h(x). \end{align}
Here, $ x=(x_{\red{n+1}},x')$ denote Fermi coordinates in a neighbourhood of the boundary  with $\Omega = \{ x_{\red{n+1}} \geq 0 \}$ and $M = \{ x_{\red{n+1}} = 0 \}.$ We show in section \ref{s:zeroSteklov} that for $\chi_{h}^{aw} \phi_h$ (the piece supported near the zero section), one has the apriori bound
\begin{equation} \label{apriori}
 \| \chi_{h}^{aw} \phi_h \|_{L^2} = O(e^{-C/h})
 \end{equation}
 for some $ C>0.$ 
 
 It follows from \cite{SU} that  ${\mathcal P} (1 - \chi_h^{aw}) \phi_h (x)$  (the piece $h$-microlocally supported away from the zero section) has Schwartz kernel of the form
 $$ K(x,y',h) = (2\pi h)^{-{\red{n}}} \int_{\R^{{\red{n}}}} e^{ i(\red{\Psi}(x_{\red{n+1}},x',\red{\xi'})- \langle \red{y', \xi '}\rangle) /h}  \, c(x,\red{y'},\red{\xi'},h) \, \chi(x'-y') \, d\red{\xi'} + O(e^{-C/h}),$$
with $c \in S^{0}(1)$ supported away from $|\red{\xi'}|=0$ and in $S^{0,0}_{cla}(|\red{\xi'}|>2\e)$, $\red{\Psi}$ solving
\begin{equation}
\label{eikonal}
(\partial_{x_{\red{n+1}}}\red{\Psi})^2+r(x,\partial_{x'}\red{\Psi})=0,\quad \red{\Psi}(0,x',\xi')=\lan x',\xi'\ran,\quad \Im \red{\Psi}\geq 0
\end{equation}
with $r(a,x',\xi')$ the symbol of the Laplacian induced on $\{x_{\red{n+1}}=a\}.$ In particular, 
$$r(x_{\red{n+1}},x',\xi')=|\xi'|_{x'}^2+2x_{\red{n+1}}Q(x',\xi')+O(x_{\red{n+1}}^2)$$
where $Q(x',\xi')$ is the symbol of the second fundamental form.
It follows by Taylor expansion in $x_{\red{n+1}}$ that 
$$\red{\Psi}=i\red{\Psi}_1(x_{\red{n+1}},x',\xi')+\lan x',\xi'\ran$$
where 
$$\red{\Psi}_1(x_{\red{n+1}},x',\xi')=x_{\red{n+1}}|\xi'|_{x'}+\frac{x_{\red{n+1}}^2(Q(x',\xi')+i\lan \partial_{x'}|\xi'|_{x'},\xi'\ran_{x'})}{2|\xi'|_{x'}}+O(x_{\red{n+1}}^3|\xi'|_{x'})$$
is homogeneous of degree 1 in $\xi'$. 
 The near-diagonal cutoff $\chi(x'-y')$ appears above since $K(x,y',h)$ is the part of the Poisson operator arising $h$-microlocally from the complement of the zero section.

Consequently, from \eqref{poisson} and \eqref{apriori} we have
\begin{align} \label{poisson2}
u_{h}(x) &= (2\pi h)^{-{\red{n}}}  \int_{\R^{\red{n}}} \int_{M} e^{ i(\red{\Psi}(x_{\red{n+1}},x',\red{\xi'})- \langle y', \red{\xi'}\rangle) /h} \, c(x,y',\red{\xi'},h) \, \chi(x'-y') \, \phi_h(y') \, dy' d\red{\xi'}\nonumber \\
& + O(e^{-C/h}). \end{align}

Next we make an analytic resolution of the identity and write
$$ Id = S(h) T(h) + O(e^{-C/h})$$
where $T(h): C^{\infty}(M) \to C^{\infty}(T^*M)$ is an FBI transform \red{with phase function $\varphi$ (not to be confused with the Steklov eigenfunction $\varphi_h$)} and $S(h): C^{\infty}(T^*M \to C^{\infty}(M)$ is a left-parametrix. Notice that by Theorem \ref{mainthm2} or Corollary \ref{thm2} together with the analysis in section \ref{s:zeroSteklov} for $\chi_1\in C_c(\re)$ with $\chi_1 \equiv 1$ on $[-1,1]$, and $\supp \chi _1\subset [-2,2]$, for any $\e>0$,
$$(1-\chi_1(\e^{-1}(|\alpha_{\xi'}|_{\alpha_{x'}}-1))T(h)=O_{L^2}(e^{-c/h}).$$

Let $\chi_{1,\e}(\alpha)=\chi_1(\e^{-1}(|\alpha_{\xi'}|_{\alpha_{x'}}-1)).$ Then,
$$\varphi_h=S(h)\chi_{1,\e}(\alpha)T(h)\varphi_h+O_{L^2}(e^{-c/h})$$
and substitution in the integral formula for $u_h$ gives
\begin{equation} \label{lift}
u_h(x) = (2\pi h)^{-{\red{n}}}  \int_{M} K_{h,x}(y')  \, S(h)\chi_{1,\e}(\alpha) T(h) \phi_h(y') dy' + O(e^{-C/h})
\end{equation} 
with
$$K_{h,x}(y') = \int_{\R^{{\red{n}}}}   e^{ i(\red{\Psi}(x_{\red{n+1}},x',\red{\xi'})- \langle y', \red{\xi '}\rangle) /h}  c(x',y',\red{\xi'},h)  \chi(x'-y') \, d\red{\xi'}.$$
One can then rewrite the formula \eqref{lift} in the form
\begin{align} \label{lift2}
u_h(x) &= (2\pi h)^{-{\red{n}}} \langle \chi_{1,\e}(\alpha)S(h)^t K_{h,x}, \overline{T(h) \phi_h} \rangle_{L^2(T^*M)} + O(e^{-C/h}).
\end{align}

Thus, $\chi_{1,\e}(\alpha)S(h)^t K_{h,x}(\alpha)$ equals
\begin{multline*} 
 (2\pi h)^{-{\red{n}}} \int_{M} \int_{\R^{{\red{n}}}} e^{i  [ \, -\overline{\phi(\alpha,y')}+ \red{\Psi}(x_{\red{n+1}},x',\red{\xi'})- \langle y', \red{\xi'} \rangle \, ] /h} \\
 c(x',y',\red{\xi'},h) \chi(\alpha_x -y')  a(y',\alpha_{x'},h) \chi(x'-y') \chi_{1,\e}(\alpha)\, dy' d\red{\xi'}
 \end{multline*}
with $\alpha = (\alpha_{x'}, \alpha_{\xi'}) \in T^*M$ and $c \in S^{0,0}_{cla}(|\red{\xi'}|>2\e), \, a \in S^{3{\red{n}}/4, {\red{n}}/4}_{cla}.$ \
Next, we apply analytic stationary phase in the $(y',\red{\xi'})$ variables. We can do for $x_{\red{n+1}}$ small since $\psi(x_{\red{n+1}},x',\eta')=\lan x',\eta'\ran +O_{C^\infty}(x_{\red{n+1}})$. 
Writing
$$ \Phi(x,\red{\xi'};\alpha,y'):= - \overline{\phi(\alpha,y')} + \red{\Psi}(x,\red{\xi'})- \langle y', \red{\xi'}\rangle $$ 
for the total phase and computing in geodesic normal coordinates centered at $\alpha_{x'},$ the critical point equations are
\begin{gather*}
 \partial_{\red{\xi'}} \Phi = x'-y' +i\partial_{\red{\xi'}}\red{\Psi}_1(x',\red{\xi'})=0,\\
\partial_{y'}\Phi=-\partial_{y'} \overline{\phi(\alpha,y')} - \red{\xi'}  = \alpha_{\xi'} - \red{\xi'} + O(|\alpha_{x'}-y'|)  =  0.
 \end{gather*}
It follows that, denoting the complex critical points by $y'_c$, $\eta'_c$, we have, using the fact that $\psi_1$ is homogeneous of degree 1
$$\lan x'-y_c',\red{\xi}_c'\ran=-i\lan \partial_{\red{\xi}'} \red{\Psi}_1 (x',\red{\xi}'_c),\red{\xi}'_c\ran=-i\red{\Psi}_1(x,\red{\xi}'_c).$$
Therefore, 
\begin{gather*} 
\Phi(x,\red{\xi}'_c,\alpha,y'_c)=-\overline{\varphi(\alpha,y'_c)},\\
 y'_{c} = x'+i\partial_{\red{\xi}'}\psi_1|_{\red{\xi}'=\red{\xi}'_c},\quad \red{\xi}_c'=\alpha_{\xi'}+O(|\alpha_{x'}-y_c'|).
 \end{gather*}
 Here, we implicitly analytically continue $\overline{\varphi(\alpha,y')}$. In particular, notice that since on the support of the integrand, $\left||\alpha_{\xi}|_g-1\right|\ll 1$, and $\red{\xi}_c'=\alpha_{\xi'}+O(|\alpha_{x'}-y_c'|)$, we have that $||\red{\xi}_c'|_g-1|\ll 1$ and hence the fact that the amplitude is not analytic in $\red{\xi}'$ near $\red{\xi}'=0$, does not cause issues in the analytic stationary phase argument.

Consequently, the $(2 \pi h)^{-{\red{n}}}$ factor in front of \eqref{lift} gets killed upon application of analytic stationary phase in $(y',\eta')$  and we get the bound
\begin{align} \label{sp 1}
|\chi_{1,\e}(\alpha)S(h)^t K_{x,h}(\alpha)| &\lessapprox   h^{-3{\red{n}}/4}e^{\Im \overline{\phi(\alpha,y_c'(x,\alpha))}/h} \chi_{1,\e}(\alpha)\chi(\alpha_{x'}-y_c') .
\end{align}
Substitution of \eqref{sp 1} together with the weighted $L^2$ estimate $\| e^{\psi/h} T(h) \phi_h \|_{L^2} = O(1)$ with $\psi = \gamma p^2, \, \gamma < \frac{1}{2}$ (see Theorem \ref{mainthm3} \eqref{holbound1} )   in \eqref{lift2} gives, by Cauchy-Schwarz, 
\begin{align} \label{preupshot}
| u_h(x) | &\leq  | \, \langle e^{-\psi/h} \chi_{1,\e}(\alpha)S(h)^t K_{h,x},  \overline{e^{\psi/h} T(h) \phi_h} \rangle_{L^2(T^*M)} \, | + O(e^{-C/h}) \nonumber \\ 
& \leq h^{-3{\red{n}}/4} \| e^{-\psi(\alpha)/h} e^{\Im\overline{\varphi(\alpha,y_c'(x,\alpha))}/h} \|_{L^2(T^*M,d\alpha)} + O(e^{-C/h}).
\end{align}

 Now, recall that we may work with $T_{hol}$ and $S_{hol}$ since for some $\e>0$,
 $$S_{hol}\chi(\e^{-1}p^2)T_{hol}\varphi_h=\varphi_h+O_{L^2}(e^{-c/h}).$$
In this case the analytic continuation, of $-\overline{\varphi}$ is given by
 $$i(\rho(\bar{\alpha})+r^2_{\mathbb{C}}(\bar{\alpha},y))/2.$$
 Computing in normal geodesic coordinates centered at $\alpha_x$, observe that 
 $$r^2(\alpha_x,y)=\lan\alpha_x-y,\alpha_x-y\ran$$
 Therefore, 
 \begin{align*} 
2i\overline{\varphi(\alpha,y_c'(x,\alpha))}&=\lan \alpha_{x'}+i\alpha_{\xi'}-y_c',\alpha_{x'}+i\alpha_{\xi'}-y_c'\ran+|\alpha_{\xi'}|^2\\
&=| \alpha_{x'}-\Re y_c'|^2-|\Im y_c'|^2+2i\lan \alpha_{x'}-\Re y_c',\Im y_c'\ran +2i\lan \alpha_{x'}-y_c',\alpha_{\xi'}\ran
 \end{align*} 
Now, by Taylor expansion in $\red{\xi'}$,
\begin{align*}\lan \partial_{\red{\xi'}}\red{\Psi}_1|_{\red{\xi}'=\red{\xi}_c'},\alpha_{\xi'}\ran &=\red{\Psi}_1(x,\alpha_{\xi'})+x_{\red{n+1}}O(|\red{\xi}_c'-\alpha_{\xi'}|^2)\\
&=\red{\Psi}_1(x,\alpha_{\xi'})+x_{\red{n+1}}O(|\alpha_{x'}-y_c'|^2)\\
&=\red{\Psi}_1(x,\alpha_{\xi'})+x_{\red{n+1}}O((|\alpha_{x'}-\Re y_c'|^2+|\Im y_c'|^2))
\end{align*}
so,
 \begin{align*} 
\Im \overline{\varphi(\alpha,y_c'(x,\alpha))}&=-\frac{1}{2}(| \alpha_{x'}-\Re y_c'|^2(1+O_{C^\infty}(x_{\red{n+1}}))-|\Im y_c'|^2(1+O_{C^\infty}(x_{\red{n+1}}))) -\red{\Psi}_1(x,\alpha_{\xi'})
 \end{align*} 

Written out explicitly, the last line in \eqref{preupshot} says that for $\gamma<1/2$,
\begin{align} 
|u_h(x)| & \lessapprox h^{-3{\red{n}}/4}  \Big( \int_{T^*M} e^{2 \Im\overline{\varphi(\alpha,y_c'(x,\alpha))}/h} e^{-2 \psi(\alpha) /h} \, d\alpha \Big)^{1/2}  \nonumber \\ 
& \lessapprox h^{-3{\red{n}}/4}  \Big( \int_{T^*M} e^{-[(|\alpha_{x'}-\Re y_c'|^2-|\Im y_c'|^2)(1+O_{C^\infty}(x_{\red{n+1}}))+2\Re\red{\Psi}_1(x,\alpha_{\xi'})+2 \psi(\alpha)] /h} \, d\alpha \Big)^{1/2}  \nonumber \\
& \lessapprox h^{-3{\red{n}}/4}  \Big( \int_{T^*M} e^{-[(|\alpha_{x'}-\Re y_c'|^2-|\Im y_c'|^2)(1+O_{C^\infty}(x_{\red{n+1}}))+2\Re\red{ \Psi}_1(x,\alpha_{\xi'})+2 \psi(\alpha)] /h} \, d\alpha \Big)^{1/2}  \nonumber \\
& \lessapprox h^{-3{\red{n}}/4}  \Big( \int_{S^{{\red{n-1}}}}\int_{0}^\infty\int_{M} e^{-[(|\alpha_{x'}-\Re y_c'|^2-|\Im y_c'|^2)(1+O_{C^\infty}(x_{\red{n+1}}))+2\Re\red{\Psi}_1(x,r\omega)+ \gamma(r-1)^2] /h} \, d\alpha_{x'}drd\omega \Big)^{1/2}  \nonumber 
\end{align}
Now, recall that $ \Im y_c'=O(x_{\red{n+1}})$ and $\partial_{\alpha}y_c'=O(x_{\red{n+1}})$ and hence applying the method of steepest descent, in the $\alpha_{x'},r$ variables, the critical point of the phase occurs at 
\begin{align}
 2(\alpha_{x'}-\Re y_{c}')(I-\partial_{\alpha_{x'}}\Re y_c')+(\alpha_{x'}-\Re y_c')^2O(x_{\red{n+1}})-2(\Im y_c')\partial_{\alpha_{x'}}\Im y_c'&=O(x_{\red{n+1}}^3)\label{e:xStationary}\\
2(\alpha_{x'}-\Re y_{c'}')(-\partial_r\Re y_c')+(\alpha_{x'}-\Re y_c')^2O(x_{\red{n+1}})-2(\Im y_c')\partial_r \Im y_c'\nonumber\\
\qquad\qquad\qquad\qquad+2\partial_r\Re\red{\Psi}_1(x,r\omega)+2\gamma(r-1)&=O(x_{\red{n+1}}^3)\label{e:rStationary}%
\end{align}
Using \eqref{e:xStationary}, we have that at the stationary point
$$\alpha_{x'}-\Re y_c'=(\Im y_c')\partial_{\alpha_{x'}}\Im y_c'+O(x_{\red{n+1}}^3).$$
Putting this into \eqref{e:rStationary} gives
$$2\partial_r\Re \red{\Psi}_1(x,r\omega)+2\gamma(r-1)=2(\Im y_c')\partial_r\Im y_c'(x,\alpha_x,r\omega)+O(x_{\red{n+1}}^3).$$
Now, 
\begin{gather*}
2\partial_r\Im y_c'=0,\quad 2\partial_r\Re \red{\Psi}_1(x,r\omega)=a(x,\omega)\\
a(x,\omega)=2x_{\red{n+1}}+x_{\red{n+1}}^2\partial_{\xi'}Q(x',\omega)+x_{\red{n+1}}^2Q(x',\omega)+O(x_{\red{n+1}}^3)
\end{gather*}
So, the critical point occurs at 
$$r=1-\frac{a(x,\omega)}{2\gamma}+O(x_{\red{n+1}}^3).$$
Noting that $\Im y_c'=x_{\red{n+1}}+O(x_{\red{n+1}}^2)$ and evaluating the exponential at this point yields
\begin{align*} 
|u_h(x)|&\lessapprox h^{-{\red{n}}/2+1/4}\left(\int_{S^{{\red{n-1}}}}e^{-2(x_{\red{n+1}}+x_{\red{n+1}}^2(Q(x',\omega)-\gamma^{-1}-1)/2+O(x_{\red{n+1}}^3))/h}d\omega\right)^{1/2}\\
&\lessapprox h^{-{\red{n}}/2+1/4}e^{-(x_{\red{n+1}}-x_{\red{n+1}}^2(-\inf_{\omega\in S^{{\red{n}}-1}}Q(x',\omega)+\gamma^{-1}+1)/2+O(x_{\red{n+1}}^3))/h}.
\end{align*}
The same argument works for derivatives with each differentiation creating  a power of $h^{-1}.$ \end{proof}
 \begin{rem}
We note that the first order bound with $d(x) = d_{\partial \Omega}(x) + O( d_{\partial \Omega}^2(x))$ in Theorem \ref{mainthm} \eqref{basic bound} follows from the weighted $L^2$-bound in Theorem \ref{mainthm2} by essentially the same argument as above. The proof in that case is slightly simpler since one need only keep track of the analytic stationary phase computations to $O(x_{\red{n+1}}^2)$ error (rather than $O(x_{\red{n+1}}^3)$).
\end{rem}

\section{Microlocal estimates near the zero section} \label{zerosection}

While the Dirichlet to Neumann map is a homogeneous analytic pseudodifferential operator, it is not well behaved as a semiclassical analytic pseudodifferential operator near the zero section. In particular, when semiclassically rescaled, the full symbol of a homogeneous pseudodifferential operator, $a$, typically has singularities of the form
$$|\partial_\xi^\alpha a(x,\xi/h)|\sim C_\alpha h^{-|\alpha|},\quad |\xi|\ll 1.$$

Apriori, this singular $h$ dependence may result in the transport of semiclassical analytic wavefront sets away from the zero section under the action of a (homogeneous) pseudodifferential operator. The purpose of this section is to show that no such transport occurs and then to use this information to estimate Steklov eigenfunctions near the zero-section.

 %%%%%%%%%%%%%%%%%%%%%%%%%%%%%%%%%%%%%%%%%%%%%%%%%%%%%%%%%%%%%%%%%%%%%%%%%%%%%%%%%%%%
%%%%%%%%%%%%%%%%%%%%%%%%%%%%%%%%%%%%%%%%%%%%%%%%%%%%%%%%%%%%%%%%%%%%%%%%%%%%%%%%%%%%

\subsection{Cauchy estimates and the Euclidean FBI transform}

For $0\leq \tilde{h}\leq h$, let
$$T_{\tilde{h} }u(x,\xi,h;\mu):=\int e^{\frac{i}{\tilde{h}}\lan x-y,\xi\ran-\frac{1}{2\tilde{h}}|x-y|^2}u(y)dy.$$
Let also 
$$S_{\tilde{h}}v(x):=2^{-n}(\pi\tilde  h)^{-3n/2}\int e^{\frac{i}{\tilde{h}}\lan y-x,\eta\ran -\frac{1}{2\tilde{h}}|x-y|^2}v(y,\eta)dyd\eta.$$
Then for all $u\in \mc{S}(\re^{\red{n}})$, $u=S_{\tilde{h}}T_{\tilde{h}}u$. 

Define 
$$T'_{a,\mu,\tilde{h}}u:=\int e^{\frac{i}{\tilde{h}}\lan x-y,\xi\ran-\frac{\mu}{2\tilde{h}}|x-y|^2}a(x,y,\xi)u(y)dy.$$

The next proposition is very similar to \cite[Proposition 2.1]{Ji}. The difference is that we do not require $\mu\geq 1$. Instead, we keep track of the dependence of various estimates on $\mu\geq \mu_0$. 
\begin{prop}
\label{l:changeFBI}
Suppose that $a$ has tempered growth in $(x,y,\xi)$ and $0<\mu_0\leq \mu<\mu_1$. Then
suppose that for $W$ a neighborhood of $(x_0,\xi_0)$, 
$$\sup_{W}|T_{\tilde{h}} u |\leq Ce^{-\delta/\tilde{h}}.$$
Then there is a neighborhood $V$ of \red{$(x_0,\xi_0)$}
$$\sup_{\red{V}}|T'_{a,\mu,\tilde{h}}u|\leq Ce^{-c\min( \mu_0,\mu_1^{-1},\delta)/\tilde{h}}.$$
\end{prop}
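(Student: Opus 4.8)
The plan is to reduce the statement to an estimate on a composition (``change of FBI transform'') kernel, in the spirit of \cite[Proposition 2.1]{Ji}, but carrying the $\mu$-dependence explicitly rather than assuming $\mu\geq 1$. First I would insert the reproducing formula $u=S_{\tilde h}T_{\tilde h}u$ into the definition of $T'_{a,\mu,\tilde h}$ and interchange the order of integration (justified modulo an $\epsilon$-regularization of the intermediate $\eta$-integral, using the tempered growth of $a$ together with the Gaussian factors), so that
$$T'_{a,\mu,\tilde h}u(x,\xi)=\int_{\re^{2n}}K_{\tilde h}(x,\xi;w,\eta)\,T_{\tilde h}u(w,\eta)\,dw\,d\eta,$$
where $K_{\tilde h}(x,\xi;w,\eta)$ is an oscillatory integral in the intermediate variable $y$ whose phase is an exact negative-definite quadratic form in $y$ with leading part $-\tfrac{\mu+1}{2}|y|^{2}$ and whose amplitude is (a multiple of $\tilde h^{-3n/2}$ times) $a(x,y,\xi)$.

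The first substantial step is the kernel bound. Since the $y$-phase is exactly quadratic, I would complete the square at its (real) critical point $y_{\ast}=(\mu x+w)/(\mu+1)$, evaluate the resulting Gaussian, and shift the $y$-contour into the complex domain to absorb the linear imaginary part of the phase; this yields, for a fixed $N$,
$$|K_{\tilde h}(x,\xi;w,\eta)|\leq C\,\tilde h^{-n}\,\langle(x,\xi,w,\eta)\rangle^{N}\,\exp\!\Big(-\frac{\mu\,|x-w|^{2}+|\xi-\eta|^{2}}{2(\mu+1)\,\tilde h}\Big).$$
The key point --- the one where not assuming $\mu\geq 1$ forces care --- is that the decay rate in this Gaussian, namely $\tfrac{1}{2(\mu+1)}\min(\mu,1)$, is bounded below by a positive constant times $\min(\mu_0,\mu_1^{-1})$ uniformly for $\mu_0\leq\mu<\mu_1$, because $\mu\mapsto\tfrac{\mu}{2(\mu+1)}$ increases and $\mu\mapsto\tfrac{1}{2(\mu+1)}$ decreases. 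Thus $K_{\tilde h}$ is $\tilde h$-exponentially concentrated, at rate $\gtrsim\min(\mu_0,\mu_1^{-1})$, near the diagonal $\{w=x,\ \eta=\xi\}$ and decays rapidly in $\langle(w,\eta)\rangle$ off it.

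I would then split the $(w,\eta)$-integral using a pair of neighborhoods $V\Subset W$ of $(x_0,\xi_0)$, with $W$ bounded. Over $W$ the hypothesis gives $|T_{\tilde h}u|\leq Ce^{-\delta/\tilde h}$, while $|K_{\tilde h}|\leq C\tilde h^{-n}\langle(w,\eta)\rangle^{N}$ is integrable on the bounded set $W$, so this piece contributes $O(\tilde h^{-n}e^{-\delta/\tilde h})$. Over $W^{c}$ one has only the tempered bound $|T_{\tilde h}u(w,\eta)|\leq C\tilde h^{-M}\langle(w,\eta)\rangle^{M}$, but for $(x,\xi)\in V$ every $(w,\eta)\in W^{c}$ satisfies $|x-w|^{2}+|\xi-\eta|^{2}\geq d_0^{2}$ with $d_0=\dist(V,W^{c})>0$; feeding this into the kernel bound produces an overall factor $\exp(-c\min(\mu_0,\mu_1^{-1})d_0^{2}/\tilde h)$, the residual Gaussian keeping the $(w,\eta)$-integral against the tempered weight convergent with $\tilde h$-uniform constants. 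Adding the two contributions gives $\sup_{V}|T'_{a,\mu,\tilde h}u|\leq Ce^{-c\min(\mu_0,\mu_1^{-1},\delta)/\tilde h}$ after relabelling $c$.

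The main obstacle I anticipate is making every estimate genuinely uniform in $\mu\in[\mu_0,\mu_1)$: one must carry the factor $1/(2(\mu+1))$ through the Gaussian integration and the contour shift, check that the contour can be displaced within a fixed strip of analyticity of $a$ with $\mu$-independent bounds, and control the degeneration of the rate both as $\mu\to 0$ (rate $\sim\mu_0$) and as $\mu\to\infty$ (rate $\sim\mu_1^{-1}$). A secondary, routine point is the legitimacy of Fubini under only tempered growth, handled exactly as in \cite{Ji} by an $\epsilon$-regularization of the frequency integral.
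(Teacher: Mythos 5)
Your argument is correct and is exactly the route the paper takes (the paper records only a two-line sketch: insert $u=S_{\tilde h}T_{\tilde h}u$ and estimate the kernel of $T'_{a,\mu,\tilde h}S_{\tilde h}$ by a change of variables); your explicit kernel bound, with Gaussian rate governed by $\min\bigl(\tfrac{\mu}{2(\mu+1)},\tfrac{1}{2(\mu+1)}\bigr)\gtrsim\min(\mu_0,\mu_1^{-1})$, and the subsequent $W$ versus $W^{c}$ splitting are precisely the intended details. The only point worth recording explicitly is that the $y$-contour shift producing the off-diagonal decay in $|\xi-\eta|$ uses analyticity of $a$ in $y$ on a fixed complex strip (tempered growth alone would not suffice there), which holds for the amplitudes to which the paper applies the proposition.
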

\begin{proof}[Proof Sketch]
In order to prove the lemma, one writes $T'_{a,\mu,\tilde{h}}u=T'_{a,\mu,\tilde{h}}S_{\tilde{h}}T_{\tilde{h}}u$. Then, one can easily estimate the kernel of  $T'_{a,\mu,\tilde{h}}S_{\tilde{h}}$ using a simple change of variables.
\end{proof}

The next proposition is similar to \cite[Proposition 2.2]{Ji}, except that we obtain a quantitative estimate on distances to the zero-section.

\begin{prop}
Fix $x_0\in M$, $\e>0$. \red{Let $X$ be a neighborhood of $x_0$ in $M$.} Suppose that $\|u\|_{L^\infty(\red{X})}\leq Ch^{-N}$. Then the following are equivalent:
\begin{itemize}
\item there exist $C,c>0$, \red{$h_0>0$} and an $\e$-independent neighborhood, $W$, of $x_0$ so that for every $0\leq \tilde{h}\leq h\leq h_0$,
\begin{equation}
\label{e:FBI}
|T_{\tilde{h}}u(x,\xi,h)|\leq Ce^{-c /\tilde{h}}\|u\|_{L^\infty(X)},\quad x\in W ,\,|\xi|\geq \e,
\end{equation}
\item there exists $C_1>0$, \red{$h_0>0$} and an $\e$-independent neighborhood, $W$, of $x_0$ and constant $c_1>0$ so that \red{for $0<h<h_0$}
\begin{equation}
\label{e:supNorm}\sup_{\Re x\in W,\,|\Im x|\leq c_1}|u|\leq C_1e^{\frac{\e}{2h}}\red{\|u\|_{L^\infty(X)}}
\end{equation}
\item there exists $C>0$, $\red{h_0>0}$ and an $\e$-independent neighborhood, $W$, of $x_0$ so that \red{for $0<h<h_0$}
\begin{equation}
\label{e:cauchy}|(hD)^\alpha u\red{(x)}|\leq C\|u\|_{L^\infty(X)}C^{|\alpha|}(h|\alpha|+\e)^{|\alpha|},\quad x\in W.
\end{equation}
\end{itemize}
\end{prop}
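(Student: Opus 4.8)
The plan is to prove the proposition by establishing a cycle of implications among the three conditions, which is the standard structure of such equivalences between FBI-transform decay, holomorphic extension with controlled exponential growth, and Cauchy-type estimates on semiclassical derivatives. I would prove \eqref{e:FBI} $\Rightarrow$ \eqref{e:supNorm} $\Rightarrow$ \eqref{e:cauchy} $\Rightarrow$ \eqref{e:FBI}, keeping careful track of the dependence of all constants on $\e$ so that the neighborhood $W$ can be chosen independently of $\e$. Throughout I would work locally in a coordinate chart identifying a neighborhood of $x_0$ with a neighborhood of the origin in $\R^n$.

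\textbf{\eqref{e:FBI} $\Rightarrow$ \eqref{e:supNorm}.} The idea is to reconstruct $u$ from its FBI transform via the inversion formula $u = S_{\tilde h}T_{\tilde h}u$ and then deform the contour to reach complex values of $x$. Writing
$$
u(x) = 2^{-n}(\pi\tilde h)^{-3n/2}\int e^{\frac{i}{\tilde h}\langle y-x,\eta\rangle - \frac{1}{2\tilde h}|x-y|^2}\,T_{\tilde h}u(y,\eta)\,dy\,d\eta,
$$
I would split the $(y,\eta)$-integration into the region $|\eta|\geq\e$, where \eqref{e:FBI} gives exponential smallness $e^{-c/\tilde h}$ of $T_{\tilde h}u$, and the region $|\eta|<\e$, where I only have the crude bound $|T_{\tilde h}u|\lesssim \tilde h^{-3n/2}\|u\|_{L^\infty(X)}$. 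For $x$ complex with $|\Im x|\leq c_1$ small, the factor $e^{-\frac{1}{2\tilde h}|x-y|^2}$ contributes $e^{\frac{1}{2\tilde h}|\Im x|^2}$ at worst, while the factor $e^{\frac{i}{\tilde h}\langle y-x,\eta\rangle}$ contributes $e^{\frac{1}{\tilde h}|\Im x||\eta|}$; on the region $|\eta|<\e$ this is at most $e^{\frac{\e|\Im x|}{\tilde h}} \le e^{\frac{\e}{2\tilde h}}$ provided $c_1\le \tfrac12$, and choosing $c_1$ small enough the Gaussian term is negligible. Optimizing over $\tilde h\le h$ (the worst case is $\tilde h = h$) gives \eqref{e:supNorm}. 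This step uses only that $u$ itself, hence $T_{\tilde h}u$, is polynomially bounded.

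\textbf{\eqref{e:supNorm} $\Rightarrow$ \eqref{e:cauchy}.} This is a Cauchy-estimate argument. Given that $u$ extends holomorphically to a complex neighborhood $\{\Re x\in W,\ |\Im x|\le c_1\}$ with the bound \eqref{e:supNorm}, I would apply the Cauchy integral formula on polydiscs of radius $r$ (to be optimized) centered at a real point $x\in W'$ for a slightly smaller neighborhood: $|\partial^\alpha u(x)|\le \alpha!\, r^{-|\alpha|}\sup_{|\Im x|\le r}|u|\le C\,\alpha!\,r^{-|\alpha|}e^{\e/(2h)}\|u\|_{L^\infty(X)}$. Multiplying by $h^{|\alpha|}$ and optimizing the choice of $r\le c_1$: if $h|\alpha|\ge \e$ one takes $r$ comparable to $h|\alpha|$ up to the cap $c_1$, and if $h|\alpha| < \e$ one takes $r\sim \e$, so that in all cases $e^{\e/(2h)}$ is absorbed and one obtains $|(hD)^\alpha u(x)|\le C\|u\|_{L^\infty(X)}C^{|\alpha|}(h|\alpha|+\e)^{|\alpha|}$, using $\alpha!\le n^{|\alpha|}|\alpha|!\le C^{|\alpha|}|\alpha|^{|\alpha|}$.

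\textbf{\eqref{e:cauchy} $\Rightarrow$ \eqref{e:FBI}.} Here I would use the Cauchy bounds to estimate $T_{\tilde h}u$ directly by the method of (non-)stationary phase / repeated integration by parts. Writing $T_{\tilde h}u(x,\xi) = \int e^{\frac{i}{\tilde h}\langle x-y,\xi\rangle - \frac{1}{2\tilde h}|x-y|^2}u(y)\,dy$, I would Taylor-expand $u$ around $y = \Re x$ (a genuine Taylor expansion with remainder, controlled by \eqref{e:cauchy}) and for each monomial $(y-\Re x)^\beta$ evaluate the resulting Gaussian integral, which is computable explicitly and produces factors $\tilde h^{|\beta|/2}$. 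Alternatively, and more cleanly, I would complete the square in the exponent: the critical point in $y$ of $i\langle x-y,\xi\rangle - \frac12|x-y|^2$ is $y_c = x - i\xi$, so after contour shift the integral becomes $e^{-\frac{1}{2\tilde h}|\xi|^2}\int e^{-\frac{1}{2\tilde h}|y-x+i\xi|^2}u(y)\,dy$ with $y$ now ranging over a shifted contour; using the Taylor expansion of $u$ at the (complex) critical point together with \eqref{e:cauchy} to bound the Taylor coefficients by $C^{|\beta|}(h|\beta|+\e)^{|\beta|}h^{-|\beta|}$, one sums the resulting series. When $\tilde h \le h$ and $|\xi|\ge\e$, the leading factor $e^{-|\xi|^2/(2\tilde h)} \le e^{-\e^2/(2\tilde h)}$ beats the polynomially/subexponentially growing Taylor sum, yielding the claimed $Ce^{-c/\tilde h}\|u\|_{L^\infty(X)}$ with $c$ comparable to $\e^2$. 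One must check the $\tilde h$-uniformity: since the gain $e^{-|\xi|^2/(2\tilde h)}$ only improves as $\tilde h$ decreases while the loss from the Taylor sum also only improves (it is governed by $(h|\beta|+\e)/\sqrt{\tilde h}$ type quantities, but the Gaussian always wins once $|\xi|^2 \gtrsim \tilde h$), the estimate holds for all $0\le \tilde h\le h$.

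\textbf{Main obstacle.} I expect the delicate point to be the $\e$-independence of the neighborhood $W$ and the bookkeeping of constants: in each implication one shrinks the neighborhood slightly, and one must verify that the shrinkage depends only on $c_1$ and structural constants, not on $\e$, and that the exponential rates produced are genuinely bounded below in terms of $\e$ (not deteriorating as, say, some other scale shrinks). A secondary technical nuisance is controlling the low-frequency region $|\eta|<\e$ in the inversion formula in step one — one has no decay there, only the a priori polynomial bound on $u$, so the argument crucially relies on the fact that on that region the phase cannot produce growth worse than $e^{\e|\Im x|/\tilde h}$, which forces the precise constant $\e/2$ in \eqref{e:supNorm} and the compatible cap $c_1\le 1/2$. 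Finally, in step three the convergence of the Taylor series reconstruction of $u$ near the complex critical point $y = x - i\xi$ requires $|\xi|$ (hence the imaginary shift) to be not too large; since $T_{\tilde h}u$ is only claimed small for $\xi$ in a bounded range near a fixed covector or on $|\xi|\ge\e$ within a compact set, one restricts attention to $|\xi|\le R$ for fixed $R$, and for $|\xi|>R$ the Gaussian prefactor $e^{-|\xi|^2/(2\tilde h)}$ trivially dominates — this case separation should be recorded explicitly.
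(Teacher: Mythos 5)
Your overall route is essentially the paper's: the authors also prove this by (i) passing between \eqref{e:supNorm} and \eqref{e:cauchy} via Cauchy estimates and Taylor's formula, and (ii) passing between \eqref{e:supNorm} and \eqref{e:FBI} via the resolution of the identity $u=S_{\tilde h}T_{\tilde h}u$ plus a contour deformation into the complex domain; your only structural change is to arrange these as a cycle, folding the paper's two steps ``\eqref{e:cauchy}$\Rightarrow$\eqref{e:supNorm}$\Rightarrow$\eqref{e:FBI}'' into a single Taylor-expansion-plus-contour-shift argument. The paper gives only a two-sentence sketch, so at that level your proposal is a faithful (indeed more detailed) rendering of the intended proof.

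Two points in your write-up would not survive as stated, though. First, in \eqref{e:cauchy}$\Rightarrow$\eqref{e:FBI} you cannot shift the $y$-contour all the way to the critical point $y_c=x-i\xi$: the holomorphic extension of $u$ furnished by \eqref{e:cauchy} only reaches $|\Im y|\le c_1$ with $c_1$ a small structural constant, so the available gain is $e^{-c_1|\xi|/\tilde h}$ (from a shift of length $c_1$), not $e^{-|\xi|^2/(2\tilde h)}$. Balancing that gain against the loss $e^{\e/(2h)}\le e^{\e/(2\tilde h)}$ from the sup-norm of the extension does not close at $|\xi|=\e$ when $c_1<1/2$; one either integrates by parts $\sim\delta/\tilde h$ times using \eqref{e:cauchy} directly (which yields decay for $|\xi|\ge C\e$, $C$ the constant in \eqref{e:cauchy}) or otherwise accepts that the equivalence degrades $\e$ by a multiplicative constant between the three conditions. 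This degradation is implicit in the paper (the plane wave $e^{i\langle x,\xi_0\rangle/h}$ with $|\xi_0|=\e$ satisfies \eqref{e:supNorm}--\eqref{e:cauchy} but not \eqref{e:FBI} at $\xi=\xi_0$), so it is a defect of the literal statement rather than of your strategy, but your claimed rate $c\sim\e^2$ from the full shift is not obtainable. Second, in \eqref{e:FBI}$\Rightarrow$\eqref{e:supNorm} the $\eta$-integral in $S_{\tilde h}T_{\tilde h}u(x)$ does not converge absolutely for $\Im x\ne0$ (the factor $e^{\langle\Im x,\eta\rangle/\tilde h}$ is unbounded in $\eta$ while $T_{\tilde h}u$ has no decay in $\eta$); you must deform the $y$-contour by $i\Im x$ inside the inversion integral (using that $T_{\tilde h}u(y,\eta)$ is entire in $y$) before estimating, which is exactly the ``deforming the contour'' in the paper's sketch. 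Also, minor: in that step the parameter to use is $\tilde h=h$ because it is the \emph{best} (not worst) case for the low-frequency region.
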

\begin{proof}[Proof Sketch]
The fact that \eqref{e:supNorm} implies \eqref{e:cauchy} follows from basic Cauchy estimates, while that \eqref{e:cauchy} implies \eqref{e:supNorm} follows from writing out the Taylor formula with remainder.

The equivalence of \eqref{e:supNorm} and \eqref{e:FBI} follows from writing down a resolution of the identity in terms of the FBI transform and deforming the contour into the complex domain. 

\end{proof}

The next estimate proves that the application of an analytic homogeneous pseudodifferential operator preserves estimates of the form \eqref{e:cauchy}. Roughly speaking, we show that the Sobolev mapping properties of such an operator behave like the Sobolev mapping properties of multiplication by an $h$-independent analytic function. Throughout the proof of the next proposition, we will use the following elementary estimates estimates without comment
\begin{gather}
\left(1+\frac{s}{t}\right)^t\leq e^s\leq \left(1+\frac{s}{t}\right)^{t+s},\quad t,s>0\\
(ne)^{-|\alpha|}|\alpha|^{|\alpha|}\leq \alpha !\leq |\alpha|^{|\alpha|},\quad \alpha \in \mathbb{N}^n
\end{gather}
\begin{prop}
\label{l:pseudo}
Let $P$ be a homogeneous analytic pseudodifferential operator of order $k$. Suppose that $u$ satisfies \eqref{e:cauchy} in a neighborhood, $U$ of $x_0$ with some constant $C$.  Then there exists a neighborhood, $W$ of $x_0$ so that $Pu$ satisfies
$$|(hD)^\beta Pu(x)|\leq h^{-k-{\red{n}}}C^{|\beta|+1}(\e+h|\beta|)^{|\beta|},\quad x\in W.$$
\end{prop}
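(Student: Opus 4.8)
The plan is to reduce the action of the homogeneous analytic pseudodifferential operator $P$ to the action of its kernel decomposition $P = K_2 + R_2$ from the definition, where $R_2$ has real-analytic kernel and $K_2$ has kernel $(2\pi)^{-n}\int e^{i\langle x-y,\xi\rangle} b(x,\xi)\chi(|x-y|)\,d\xi$ with $b \in S^k_{ha}$. The real-analytic remainder $R_2$ maps bounded functions to functions satisfying Cauchy estimates with $\varepsilon$ replaced by a fixed constant, which is harmless (indeed better than what we need), so the entire issue is the principal piece $K_2$. First I would expand $(hD)^\beta(K_2 u)(x)$ by moving the derivatives onto the phase, producing a sum of terms of the form $h^{-n}\int\int e^{i\langle x-y,\xi\rangle/h}\,\xi^{\beta'}\,(\partial^{\beta''}\text{ of cutoff and }b)\,u(y)\,dy\,d\xi$ after the semiclassical rescaling $\xi \mapsto \xi/h$ (this is where the $h^{-k-n}$ prefactor will come from, tracking the order-$k$ growth of $b$ and the volume factor).

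Next, the core estimate is to combine the analyticity of $b$ (which gives, via \eqref{homsymbol1}, factorial control on the Taylor coefficients $b_{k-j}$) with the Cauchy estimates \eqref{e:cauchy} on $u$. The key mechanism is: when $\xi$ is in the region $|\xi| \leq C\langle\text{something}\rangle$ one uses the polyhomogeneous expansion \eqref{homsymbol2} to approximate $b$ by its partial sum modulo an exponentially small error $C_2 e^{-|\xi|/C_2}$, and that exponential gain is what lets one absorb the polynomial loss from the $\xi^{\beta'}$ factors and perform the $\xi$-integral. For each term of the partial sum, one integrates by parts in $\xi$ (or equivalently recognizes the $\xi$-integral against $e^{i\langle x-y,\xi\rangle/h}$ as producing, up to the cutoff, derivatives of a delta-type distribution in $x-y$) to land on derivatives of $u$ of controlled order, and then plugs in \eqref{e:cauchy}. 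The bookkeeping is: a term with $j$ subtracted from the order and $\ell$ derivatives falling on $u$ contributes something like $C^{j+\ell}\, j^j\,(\varepsilon + h\ell)^\ell$ times combinatorial factors, and one must sum over $j, \ell$ and over the multi-index splitting $\beta = \beta' + \beta''$. Using the elementary inequalities $(ne)^{-|\alpha|}|\alpha|^{|\alpha|} \leq \alpha! \leq |\alpha|^{|\alpha|}$ and $(1+s/t)^t \leq e^s$ quoted just before the proposition, one shows the double sum telescopes/geometrizes to $C^{|\beta|+1}(\varepsilon + h|\beta|)^{|\beta|}$; the slight enlargement of the constant $C$ and the shrinking of the neighborhood $W \Subset U$ absorb all the fixed geometric and combinatorial constants.

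I expect the main obstacle to be the precise matching of the two competing scales in the $\xi$-integral: for $|\xi| \lesssim h|\beta| + \varepsilon$ (small frequencies, where the homogeneous symbol is singular after rescaling) one cannot integrate by parts freely, and one must instead estimate crudely, using that the volume of this region is $\lesssim (h|\beta| + \varepsilon)^n$ and that $b$ is merely bounded by $|\xi|^k \lesssim (h|\beta|+\varepsilon)^k$ there — this is exactly the origin of the $h^{-k-n}$ loss and the $(\varepsilon + h|\beta|)^{|\beta|}$ shape, and it is the place where one really sees that $P$ behaves like multiplication by an analytic function rather than like a smoothing operator. For $|\xi| \gtrsim h|\beta| + \varepsilon$ one has room to integrate by parts and the exponential decay $e^{-|\xi|/C}$ from \eqref{homsymbol2} dominates everything. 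Carefully splitting the integral at this threshold, and arranging the analytic (Cauchy-type) estimates on both $b$ and $u$ so that the factorials from the two sources multiply correctly rather than catastrophically, is the delicate part; everything else is the routine but lengthy summation of the resulting series, which I would present schematically rather than term by term.
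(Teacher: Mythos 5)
Your skeleton matches the paper's: decompose $P$ into the near-diagonal oscillatory-integral piece plus the real-analytic remainder, dispose of the remainder by Cauchy estimates, Leibniz-expand $\partial_x^\beta$ over the phase and the symbol to produce the $\sum_{\beta'+\beta''=\beta}\xi^{\beta'}\partial_x^{\beta''}$ terms, and close the sums with the two elementary inequalities stated before the proposition. The gap is in the core mechanism you propose for the main piece. You split the $\xi$-integral at $|\xi|\sim \e+h|\beta|$ and assert that in the high-frequency region ``the exponential decay $e^{-|\xi|/C}$ from \eqref{homsymbol2} dominates everything.'' That is a misreading of \eqref{homsymbol2}: the exponential bound there controls only the \emph{difference} between $a$ and the truncated polyhomogeneous sum, not the symbol itself. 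The symbol (and each partial sum, whose leading term is homogeneous of degree $k$) grows like $|\xi|^k$, so after multiplying by $\xi^{\beta'}$ the high-frequency $\xi$-integral does not converge absolutely and cannot be estimated by taking absolute values, no matter how the error term is used. Your low-frequency volume count is fine, and integrating by parts in $y$ to trade $\xi^{\beta'}$ for $(hD)^{\beta'}u$ (controlled by \eqref{e:cauchy}) is the right instinct, but it leaves the divergent $\int |\xi|^{k}\,d\xi$ untouched; some genuine additional mechanism (dyadic decomposition plus repeated integration by parts in $\xi$ to gain $|x-y|^{-N}$ off the diagonal, or an $L^2$ argument) is required, and your sketch supplies none.

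The paper avoids the absolute-convergence problem entirely by never estimating the oscillatory integral pointwise at this stage. It factors each term as an order-zero operator $(\xi^{\beta'}\langle\xi\rangle^{-|\beta'|-k}\partial_x^{\beta''}p)(x,D)$ composed with $\langle D\rangle^{k+|\beta'|}$, bounds the first factor on $L^2$ by Calder\'on--Vaillancourt (so only finitely many $\xi$-derivatives of the symbol enter, each controlled with factorial constants by analyticity of $p$), pays for the second factor with $\|u\|_{H^{k+|\beta'|}(U)}$, which is exactly what \eqref{e:cauchy} controls, and only at the very end converts $L^2$ bounds to pointwise ones by Sobolev embedding (this relabeling $|\beta|\mapsto|\beta|+n$ is where the $h^{-n}$ in the prefactor actually comes from, not from a rescaling Jacobian). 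A contour shift $\xi\mapsto\xi+iR(x-y)/|x-y|$ is also used to clean up the terms where derivatives hit the cutoff. To repair your argument you should either adopt this $L^2$/duality route or replace the appeal to \eqref{homsymbol2} with a correct treatment of the high-frequency tail.
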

\begin{proof}
Let $\chi \in C_c(\re)$ with $\chi\equiv 1$ on $[-1,1]$ and $\supp \chi \subset [-2,2]$. Then for any $\delta>0$, the kernel of $P$ is given by 
$K_\delta (x,y)+R_\delta(x,y)$ where $R_\delta$ is real analytic and
$$K_\delta (x,y)=(2\pi )^{-{\red{n}}}\int e^{i\lan x-y,\xi\ran} {\red{p}}(x,y,\xi)\chi((3\delta)^{-1}|x-y|)d\xi$$
with ${\red{p}}\in S^k_{ha}$.

The kernel of $\partial_x^\beta P$ is given by $\partial_x^\beta K+\partial_x^\beta R_\delta$.  Since $R_\delta$ is analytic, 
$$\left|\int \partial_x^\beta R_\delta (x,y)u(y)dy\right|\leq \|\partial_x^\beta R(x,y)\|_{L^2_y}\|u\|_{L^2}\leq C_\delta ^\beta \beta!\|u\|_{L^2},$$
so we need only consider $\partial_x^\beta K_\delta$. 

Observe that
$$\partial_x^\beta K_\delta (x,y)=(2\pi )^{-{\red{n}}}\int e^{i\lan x-y,\xi\ran }\sum_{\beta'+\beta''=\beta}\xi^{\beta'}\partial_{x}^{\beta''}({\red{p}}\chi((3\delta)^{-1}|x-y|))d\xi.$$
Deforming the contour in $\xi$ to 
$$\xi\mapsto \xi+iR\frac{x-y}{|x-y|}$$
for some $R>0$,
we can, modulo an analytic error, replace the kernel by
\begin{equation}
\label{e:kernel1}(2\pi )^{-{\red{n}}}\int e^{i\lan x-y,\xi\ran }\sum_{\beta'+\beta''=\beta}\xi^{\beta'}\partial_{x}^{\beta''}({\red{p}})\chi(\delta^{-1}|x-y|)d\xi.\end{equation}
Let $\partial_x^\beta \tilde{P}$ be the operator with kernel as in \eqref{e:kernel1}.
Then, let $\psi\in C_c^\infty(M)$ have support on a neighborhood, $W$ of $x_0$ so that $W\subset U$ and $d(W,\partial U)\geq 2\delta$ \red{ with $U$ as in the statement of the proposition}. \red{By~\cite[Theorem 4.23]{Zw} there exists $N>0$ a constant (independent of $h,\beta,P$) so that}
\begin{align*} \|\psi\partial_x^\beta \tilde{P}u\|_{L^2}&\leq \sum_{\beta'+\beta''=\beta}\|(\xi^{\beta'}\lan \xi\ran^{-|\beta'|-\red{k}}\partial_x^{\beta''}{\red{p}})(x,D)\|_{L^2\to L^2}\|u\|_{H^{k+|\beta'|}(U)}\\&\leq \sum_{\beta'+\beta''=\beta}\sum_{|\alpha|\leq {\red{Nn}}}\|\partial^\alpha(\xi^{\beta'}\lan \xi\ran^{-|\beta'|-k}\partial_x^{\beta''}{\red{p}})\|_{L^\infty}\|u\|_{H^{k+|\beta'|}(U)}\\
&\leq C\sum_{j=1}^{|\beta|}\sum_{|\alpha|\leq {\red{Nn}}}\frac{|\beta|!}{(|\beta|-j)!j!}C^{|\beta|-j+|\alpha|}(|\beta|-j+|\alpha|)!h^{-k-j}C^{k+j}(\e+h(k+j))^{k+j}\\
&\leq C\sum_{j=1}^{|\beta|}\sum_{|\alpha|\leq {\red{Nn}}}\frac{|\beta|!(\red{n}e)^{|\beta|}}{(|\beta|-j)^{|\beta|-j}j^j}C^{|\beta|-j+|\alpha|}\\
&\qquad(|\beta|-j+|\alpha|)^{|\beta|-j+|\alpha|}h^{-k-j}C^{k+j}(\e+h(k+j))^{k+j}
\end{align*}
\begin{align*}
%&\leq C\sum_{k=1}^{|\beta|}\sum_{|\alpha|\leq Mn}|\beta|!(ne)^{|\beta|}k^{m}C^{|\beta|-k+|\alpha|}\left(1+\frac{|\alpha|}{|\beta|-k}\right)^{|\beta|-k}\\
%&\qquad (|\beta|-k+|\alpha|)^{|\alpha|}C^{m+k}\left(1+\frac{\e+hm}{hk}\right)^{m+k}\\
%&\leq C\sum_{k=1}^{|\beta|}\sum_{|\alpha|\leq Mn}|\beta|!(ne)^{|\beta|}k^{m}C^{|\beta|-k+|\alpha|}e^{|\alpha|} (|\beta|-k+|\alpha|)^{|\alpha|}C^{m+k}\left(1+\frac{\e+hm}{h|\beta|}\right)^{|\beta|}h^{-m}\\
\|\psi\partial_x^\beta\tilde{P}u\|_{L^2}&\leq C^{|\beta|+1}|\beta|!(\red{n}e)^{|\beta|}\left(h+\frac{\e}{|\beta|}\right)^{|\beta|}h^{-|\beta|-k}\\
&\qquad\sum_{\substack{0\leq j\leq |\beta|\\|\alpha|\leq \red{Nn}}}j^{k}C^{|\beta|-j+|\alpha|}e^{|\alpha|} (|\beta|-j+|\alpha|)^{|\alpha|}C^{k+j}\\
&\leq C^{|\beta|+1}\left(h|\beta|+\e\right)^{|\beta|}h^{-|\beta|-k}
\end{align*}
Relabeling $|\beta|=|\beta|+{\red{n}}$ gives the desired estimates by using Sobolev embeddings. The result follows from taking $\psi\equiv 1$ on a slightly smaller neighborhood $W'\subset W$.
\end{proof}

Let $\chi\in \Cc(\re)$ with $\supp \chi\subset[-1,1]$, $\e>0$, and define
$$S^{\e}_{\tilde{h}}v(x):=2^{-n}(\pi h)^{-3n/2}\int e^{\frac{i}{\tilde{h}}\lan y-x,\eta\ran -\frac{1}{2\tilde{h}}|x-y|^2}\chi(\e^{-1}|\eta|)v(y,\eta)dyd\eta.$$
\begin{prop}
\label{l:cutoff}
Let $\chi \in \Cc(\re)$ with $\supp \chi\subset [-1,1]$. Then for for all $\e>0$, $|\xi|\geq 2\e$, and $0<\tilde{h}\leq h$, 
$$|T_{\tilde{h}}S_h^{\e}u(x,\xi)|\leq Ce^{-\frac{|\xi|^2}{2\tilde{h}}}\|u\|_{L^2}.$$
\end{prop}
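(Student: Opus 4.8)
The statement to prove is Proposition~\ref{l:cutoff}: for $\chi\in\Cc(\re)$ with $\supp\chi\subset[-1,1]$, all $\e>0$, $|\xi|\geq 2\e$, and $0<\tilde h\leq h$,
$$|T_{\tilde h}S_h^\e u(x,\xi)|\leq Ce^{-|\xi|^2/(2\tilde h)}\|u\|_{L^2}.$$
The natural approach is to write out the composition $T_{\tilde h}S_h^\e$ explicitly as an oscillatory integral operator, perform the Gaussian integral in the intermediate spatial variable, and then estimate the resulting kernel by a Schur-type bound. First I would insert the definitions: $T_{\tilde h}S_h^\e u(x,\xi)$ is
$$2^{-n}(\pi h)^{-3n/2}\int e^{\frac{i}{\tilde h}\langle x-w,\xi\rangle-\frac{1}{2\tilde h}|x-w|^2}\,e^{\frac{i}{h}\langle y-w,\eta\rangle-\frac{1}{2h}|w-y|^2}\chi(\e^{-1}|\eta|)\,u(y)\,dw\,dy\,d\eta.$$
The key point is that on the support of $\chi(\e^{-1}|\eta|)$ we have $|\eta|\leq\e\leq|\xi|/2$, so the two frequencies $\xi$ (coming from $T_{\tilde h}$) and $\eta$ (coming from $S_h^\e$) are quantitatively separated. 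The plan is to integrate out $w$ first (a Gaussian integral whose phase is quadratic in $w$), picking up the Gaussian $e^{-|\xi|^2/(2\tilde h)}$ as a genuine decay factor because the critical point in $w$ lies in the complex domain at distance $\sim|\xi|/\tilde h$ in the relevant direction.

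More precisely, the second key step: group the $w$-dependent part of the phase/amplitude. The exponent in $w$ is
$$-\frac{1}{2\tilde h}|x-w|^2-\frac{1}{2h}|w-y|^2-\frac{i}{\tilde h}\langle w,\xi\rangle-\frac{i}{h}\langle w,\eta\rangle,$$
which is a (complex-valued) quadratic form in $w$ with negative-definite real part (coefficient $-(\frac{1}{2\tilde h}+\frac{1}{2h})|w|^2$). Completing the square in $w$ and carrying out the Gaussian integral over $w\in\re^n$ produces an explicit kernel $K(x,\xi;y,\eta)$ times $\chi(\e^{-1}|\eta|)u(y)$. One then bounds $|T_{\tilde h}S_h^\e u(x,\xi)|$ by
$$\Big(\sup_{x,\xi}\int |K(x,\xi;y,\eta)|\chi(\e^{-1}|\eta|)\,dy\,d\eta\Big)^{1/2}\Big(\sup_{y,\eta}\int|K(x,\xi;y,\eta)|\,\chi(\e^{-1}|\eta|)\,dx\,d\xi\Big)^{1/2}\|u\|_{L^2},$$
except that here we only need the $L^2\to L^\infty$ estimate, so it suffices to bound $\big(\int|K(x,\xi;y,\eta)|^2\chi(\e^{-1}|\eta|)\,dy\,d\eta\big)^{1/2}$ uniformly (in $x$ and for $|\xi|\geq 2\e$) by $Ce^{-|\xi|^2/(2\tilde h)}$; this is just Cauchy--Schwarz in $(y,\eta)$. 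The real part of the exponent after integrating out $w$ will contain the term $-\frac{1}{2\tilde h}|\xi|^2 + (\text{terms controlled by }|\eta|^2\lesssim\e^2\leq|\xi|^2/4\text{ and by }|x-y|^2)$; since $\tilde h\le h$, the cross terms involving $h^{-1}|\eta|^2$ and the Gaussian localization in $|x-y|$ are either absorbed into the bound or are harmless, and one is left with a net decay of at least $e^{-|\xi|^2/(2\tilde h)}$ (possibly with a slightly worse constant in the exponent, which is fine for a statement with an unspecified $C$; if the sharp constant $\frac12$ is genuinely needed one tracks the algebra more carefully, using that the worst case $\tilde h=h$ still gives $|\eta|^2/h\le|\xi|^2/(4\tilde h)$).

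\textbf{Main obstacle.} The routine part is the Gaussian integral in $w$; the delicate part is bookkeeping the \emph{sharp} exponent $e^{-|\xi|^2/(2\tilde h)}$ (rather than $e^{-c|\xi|^2/\tilde h}$ for some small $c$) after completing the square, since the cross-terms between the $\xi$-oscillation at scale $\tilde h$ and the $\eta$-oscillation at scale $h$ mix the two semiclassical parameters. The point that makes it work is precisely that $\supp\chi\subset[-1,1]$ forces $|\eta|\le\e\le|\xi|/2$ on the relevant set, together with $\tilde h\le h$, so that after the $w$-integration the real part of the phase is bounded above by $-\frac{1}{2\tilde h}|\xi|^2$ plus a term which is $\le\frac{1}{2\tilde h}\cdot\frac14|\xi|^2$ from the $\eta$-contribution minus a positive-definite quadratic form in $(x-y)$ (up to constants); keeping the negative-definiteness of that quadratic form in $(x-y)$ is what makes the $y$-integral converge with a uniform constant. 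Once the kernel bound
$$|K(x,\xi;y,\eta)|\,\chi(\e^{-1}|\eta|)\ \lesssim\ \tilde h^{-N}\,e^{-|\xi|^2/(2\tilde h)}\,e^{-c|x-y|^2/h}\,\chi(\e^{-1}|\eta|)$$
is established, integrating in $(y,\eta)$ and applying Cauchy--Schwarz finishes the proof; the polynomial prefactor $\tilde h^{-N}$ is harmless since it can be absorbed by shrinking the constant in the exponent (or simply absorbed into $C$ for $\tilde h$ bounded below, and for $\tilde h$ small it is dominated by $e^{-c|\xi|^2/\tilde h}$ relative to $e^{-|\xi|^2/(2\tilde h)}$ after adjusting constants).
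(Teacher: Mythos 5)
Your plan is essentially the paper's own proof: write out the composed kernel, do the Gaussian integral in $w$ by completing the square (equivalently, shifting the contour), and use that $|\eta|\le\e\le|\xi|/2$ on $\supp\chi(\e^{-1}|\eta|)$ together with $\mu=\tilde h/h\le 1$ so that the resulting factor $e^{-\frac{1}{2(\mu+1)\tilde h}|\mu\eta-\xi|^2}$, combined with the surviving Gaussian in $x-y$ and Cauchy--Schwarz in $(y,\eta)$, gives the bound. Your concern about the precise constant in the exponent is warranted but immaterial: the paper's argument likewise only yields $e^{-c|\xi|^2/\tilde h}$ for a smaller explicit $c$, and only some exponential decay is used downstream.
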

\begin{proof}
The kernel of $T_{\tilde{h}}S_h^{\e}$ is given by 
\begin{align*}
&=2^{-n}(\pi h)^{-3n/2}\iint e^{\frac{i}{\tilde{h}}\left(\lan x-w,\xi\ran +\frac{\tilde{h}}{h}\lan w-y,\eta\ran\right)-\frac{1}{2\tilde{h}}\left((x-w)^2+\frac{\tilde{h}}{h}(w-y)^2\right)}\chi(\e^{-1}|\eta|)dw
\end{align*}
In particular, letting $\mu=\frac{\tilde{h}}{h}$, 
$$T_{\tilde{h}}S_h^{\e}v(x,\xi)=2^{-n}(\pi h)^{-3n/2}h^n\tilde{h}^{-n}\iint b(x,\xi,y,\eta)v(y,\eta)dyd\eta$$
where 
\begin{gather*} 
b(x,\xi,y,\eta)=e^{\frac{i}{\tilde{h}}\lan x-y,\xi+\mu^2 \eta\ran-\frac{\mu}{2(\mu+1)\tilde{h}}(x-y)^2-\frac{1}{2(\mu+1)\tilde{h}}(\mu\eta-\xi)^2}\chi(\e^{-1}|\eta|)\int e^{-\frac{\mu+1}{2\tilde{h}}\left(w-\left(\frac{\mu x+y+i(\mu \eta-\xi)}{\mu+1}\right)\right)^2}dw
\end{gather*}
Shifting contours shows that 
$$|b(x,\xi,y,\eta)|\leq \chi(\e^{-1}|\eta|)e^{-\frac{1}{2(\mu+1)\tilde{h}}(\mu \eta-\xi)^2}.$$
On $\supp\chi(\e^{-1}|\eta|)$, $|\eta|\leq  \e$ and $|\mu\eta|\leq \e$. Therefore, taking $|\xi|\geq 2\e$ proves the lemma.
\end{proof}

Our next proposition is the key proposition for this section and shows that homogeneous analytic pseudodifferential operators do not transport semiclassical analytic wavefront sets away from the zero-section.

We say that $u$ is \emph{compactly microlocalized} if there exists $\chi\in \Cc(\re)$ so that for some global FBI transform $\tilde{T}$, 
$$(1-\chi(|\alpha_{\xi}|_{\alpha_x}))\tilde{T}u=O(e^{-c/h}).$$
\begin{prop}
\label{p:moveWavefront}
Let $M$ be a compact, real analytic manifold, and $P$ be a homogeneous analytic pseudodifferential operator. Let $\tilde{T}$ be a global FBI transform on $M$ with left parametrix $\tilde{S}$. Let $\chi_0,\chi_1\in \Cc(\re)$ have $\chi_0\equiv 1$ on $[-1,1]$ with $\supp \chi_0\subset [-2,2]$ and $\chi_1\equiv 1$ on $[-{\red{N}},{\red{N}}]$ with $\supp \chi_1\subset [-2{\red{N}},2{\red{N}}]$.  Then, for $\e>0$ small enough and ${\red{N}}>0$ large enough, and $u$ compactly microlocalized, there exists $c>0$ so that
$$(1-\chi_1(\e^{-1}|\alpha_{\xi}|_{\alpha_x}))\tilde TQ\tilde S\chi_0(\e^{-1}|\alpha_{\xi}|_{\alpha_x})\tilde{T}u=O_{ C^\infty}(e^{-c/h}).$$
\end{prop}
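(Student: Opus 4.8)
The plan is to localize the problem to $\R^n$ via real-analytic coordinate charts and a partition of unity, reducing to a statement about the Euclidean FBI transform, and then to chain together the preparatory results of this section. First I would observe that since $u$ is compactly microlocalized, $\tilde S\chi_0(\e^{-1}|\alpha_\xi|_{\alpha_x})\tilde Tu = \tilde S\chi_0(\e^{-1}|\alpha_\xi|_{\alpha_x})\tilde T u$ and the quantity we must control, $\chi_0(\e^{-1}|\alpha_\xi|_{\alpha_x})\tilde T u$, is $h$-microlocally concentrated near the zero section $\{|\alpha_\xi|=0\}$; call $v := \tilde S\chi_0(\e^{-1}|\alpha_\xi|_{\alpha_x})\tilde T u$. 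Up to $O(e^{-c/h})$ errors, $v$ is a function on $M$ whose Euclidean FBI transform $T_{\tilde h} v$ (in any coordinate patch, for all $0<\tilde h\le h$) is exponentially small outside $\{|\xi|\le 2\e\}$: this is precisely the content of Proposition~\ref{l:cutoff} applied to the representation of $\tilde S\chi_0\tilde T$ as (essentially) $S_h^{2\e}$ composed with a benign operator, together with the equivalences linking the three conditions~\eqref{e:FBI},~\eqref{e:supNorm},~\eqref{e:cauchy}. Thus $v$ satisfies the Cauchy estimate~\eqref{e:cauchy} with the small parameter $\e$ in place of $1$, i.e. $|(hD)^\alpha v(x)|\le C\|v\|_{L^\infty}C^{|\alpha|}(h|\alpha|+2\e)^{|\alpha|}$ on a neighborhood of $x_0$.

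Next I would invoke Proposition~\ref{l:pseudo}: since $Q$ is a homogeneous analytic pseudodifferential operator (of some order $k$), the estimate~\eqref{e:cauchy} for $v$ propagates to the same type of estimate for $Qv$, namely $|(hD)^\beta Qv(x)|\le h^{-k-n}C^{|\beta|+1}(h|\beta|+2\e)^{|\beta|}$ on a possibly smaller neighborhood $W$ of $x_0$. This is the decisive point: the homogeneous operator $Q$, despite its singular semiclassical symbol near $\xi=0$, acts on the Cauchy-class data of $v$ like multiplication by an $h$-independent analytic function, so it cannot enlarge the effective frequency support by more than a fixed multiple of $\e$. Running the equivalences~\eqref{e:cauchy}$\Leftrightarrow$\eqref{e:supNorm}$\Leftrightarrow$\eqref{e:FBI} in the reverse direction then gives that the Euclidean FBI transform $T_{\tilde h}(Qv)$ is $O(e^{-c/h})$ on $\{|\xi|\ge C\e\}$ for some fixed $C$, uniformly for $0<\tilde h\le h$. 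Choosing $\e>0$ small enough and $N$ large enough that $C\e \le N\e$, we get that $T_{\tilde h}(Qv)$ is exponentially small on $\{|\xi|\ge N\e\}$. Covering $M$ by finitely many charts and using a real-analytic partition of unity (the partition-of-unity pieces being fixed analytic functions, they act like $Q$ and preserve these estimates) patches this into a global statement.

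Finally I would transfer back from the Euclidean FBI transform to the global FBI transform $\tilde T$ of the paper: by Proposition~\ref{l:changeFBI} (applied with amplitude the symbol of $\tilde T$ and the appropriate $\mu$) the exponential smallness of $T_{\tilde h}(Qv)$ outside $\{|\xi|\ge N\e\}$ in charts is equivalent, up to shrinking neighborhoods and constants, to $(1-\chi_1(\e^{-1}|\alpha_\xi|_{\alpha_x}))\tilde T Qv = O(e^{-c/h})$; the $C^\infty$ seminorm bounds come from the standard fact that the FBI transform of an $O(e^{-c/h})$-small function on a conic set, combined with elliptic-type estimates for the phase, gives $O_{C^\infty}(e^{-c/h})$ after possibly shrinking the set (one differentiates under the integral sign, each derivative landing on the phase produces a power of $h^{-1}$, harmless against $e^{-c/h}$). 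Since $Qv = Q\tilde S\chi_0(\e^{-1}|\alpha_\xi|_{\alpha_x})\tilde T u + O(e^{-c/h})$, this is exactly the claimed estimate.

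\textbf{Main obstacle.} The crux — and the only genuinely substantive step — is Proposition~\ref{l:pseudo}, i.e. verifying that a homogeneous analytic $\Psi$DO preserves the Cauchy estimates~\eqref{e:cauchy} with the \emph{small} parameter $\e$ rather than $1$, and in particular that the contour deformation $\xi\mapsto\xi + iR\tfrac{x-y}{|x-y|}$ used to localize the kernel does not destroy the quantitative $\e$-dependence; the bookkeeping of the factorials $(|\beta|-j+|\alpha|)!$ against $C^{|\beta|}(h|\beta|+\e)^{|\beta|}$ is delicate but is carried out in the proof of that proposition given in the excerpt, so here it may be cited. The remaining work is the routine-but-careful translation between the three equivalent formulations and between Euclidean and intrinsic FBI transforms, and the localization/partition-of-unity argument, none of which presents a conceptual difficulty.
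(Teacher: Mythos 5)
Your proposal follows essentially the same route as the paper's proof: apply Proposition~\ref{l:cutoff} (via the resolution of identity $S_hT_h$) to establish the estimate~\eqref{e:FBI} for $\tilde S\chi_0\tilde Tu$, pass through the equivalences to the Cauchy estimates~\eqref{e:cauchy}, propagate these through $Q$ by Proposition~\ref{l:pseudo}, run the equivalences backward, use independence of the choice of FBI transform (Proposition~\ref{l:changeFBI}) to return to $\tilde T$ with $N$ large, and patch with a partition of unity. The identification of Proposition~\ref{l:pseudo} as the decisive step is also exactly the paper's point, so the argument is correct and matches the paper's.
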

\begin{proof}
Notice that
$$\tilde S\chi_0(\e^{-1}|\alpha_\xi|_{\alpha_x}))\tilde{T}u$$
is microlocally vanishing for $|\xi|\geq 2\e$. Therefore, for $\psi$ with small enough support so that the Euclidean FBI transforms are well-defined,
\begin{align*} 
\psi \tilde{S}\chi_0(\e^{-1}|\alpha_\xi|_{\alpha_x})\tilde{Tu}&=\psi S_hT_h\tilde{S}\chi_0(\e^{-1}|\alpha_\xi|_{\alpha_x})\tilde{T}u\\
&=\psi S_hT_h \tilde{S}\chi_0(\e^{-1}|\alpha_\xi|_{\alpha_x})\chi(|\alpha_{\xi}|_{\alpha_x})\tilde{T}u+O{}(e^{-c/h})\\
&=\psi S_h\chi_0((2\e)^{-1}|\xi|)T_h \tilde{S}\chi_0(\e^{-1}|\alpha_\xi|_{\alpha_x})\chi(|\alpha_{\xi}|_{\alpha_x})\tilde{T}u+O{}(e^{-c/h})
\end{align*}
Now, by \red{Proposition}~\ref{l:cutoff}, \eqref{e:FBI} holds for the image of $S_h\chi_0((2\e)^{-1}|\xi|)$. In particular, Lemma \ref{l:pseudo} applies and hence taking a partition of unity, 
$Q\tilde S\chi_0(\e^{-1}|\alpha_\xi|_{\alpha_x}))\tilde{T}u$ satisfies \eqref{e:cauchy} and hence, for any $\tilde{\psi}$ supported in a small enough region so that the Euclidean FBI transform is well defined, 
$$\tilde{\psi}Q\tilde S\chi_0(\e^{-1}|\alpha_\xi|_{\alpha_x}))\tilde{T}u$$
is microlocally vanishing on $|\xi|\geq 2\e^{-1}$ and since this is independent of the choice of FBI transform, taking $M$ large enough gives
$$(1-\chi_1(\e^{-1}|\alpha_{\xi}|_{\alpha_x}))\tilde T\tilde{\psi}Q\psi\tilde S\chi_0(\e^{-1}|\alpha_{\xi}|_{\alpha_x})\tilde{T}u=O_{ C^\infty}(e^{-c/h}).$$
Taking a partition of unity then proves the lemma. 
\end{proof}

\subsection{Application to eigenvalue problems}
\label{s:zeroSteklov}
Let $Q$ be an ($h$-independent) homogeneous, elliptic, classical analytic pseudodiffertial operator of order $k>0$.  Let $\varphi_h$ denote a solution to 
$$(h^kQ-1)\varphi_h=0.$$
We are now in a position to analyze the behavior of $\varphi_h$ near $|\xi|=0$. 

Notice that for any $\tilde \chi\in C_c^\infty(\re^d)$ with $\tilde \chi \equiv 1$ near 0, and $\supp \tilde \chi \subset B(0,1)$,
$(1-\tilde \chi(hD))h^kQ$ is a semiclassical pseudodifferential operator whose symbol is analytic in the interior of $\{\tilde \chi= 0\}$. 
Thus, since $Q$ is elliptic, 
$$\sigma(h^k(1-\tilde \chi(hD))Q)|\geq c|\xi|^k,\quad |\xi|\geq 1$$
and hence by Proposition \ref{p:longRange} for $\chi\in \Cc(\re)$ with $\chi \equiv 1$ on $[-2,2]$, for any $\delta>0$, there exists $c>0$ so that
\begin{equation}
\label{e:longRange}
(1-\chi(\delta^{-1}q(\alpha_x,\alpha_\xi))(1-\chi(\delta^{-1}|\alpha_\xi|_{\alpha_x}))T_{geo}\varphi_h=O{}(e^{-c/h}).
\end{equation}

In the next proposition, we estimate the eigenfunctions $\varphi_h$ near the zero section by their norm in a small annulus around the zero section. In particular, this shows that the cutoff $(1-\chi(\delta^{-1}|\alpha_\xi|_{\alpha_x}))$ can be removed from \eqref{e:longRange}.

\begin{prop} \label{zsection}
Let $\varphi_h$ be a solution to 
\begin{equation}
\label{e:efuncEq}(h^kQ-1)\varphi_h=O_{L^2}(e^{-c/h}),\quad \|\varphi_h\|_{L^2}=1.
\end{equation}
For $\e>0$, let $\chi_\e=\chi(\e^{-1}|\alpha_{\xi}|_{\alpha_x}).$
Then for $\e>0$ small enough, there exists $c>0$ so that 
$$\|\chi_\e T_{geo}\varphi_h\|_{L^2}=O{}(e^{-c/h}).$$
\end{prop}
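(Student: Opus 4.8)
\textbf{Proposal for the proof of Proposition \ref{zsection}.}

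The plan is to exploit the ellipticity of $h^kQ-1$ away from the characteristic set together with the fact, already established in Proposition \ref{p:moveWavefront}, that homogeneous analytic pseudodifferential operators cannot transport semiclassical analytic wavefront set from the zero section outward. First I would observe that, since $\varphi_h$ solves $(h^kQ-1)\varphi_h=O_{L^2}(e^{-c/h})$ and $Q$ is homogeneous elliptic of positive order $k$, on the region $\{|\alpha_\xi|_{\alpha_x}\geq \e\}$ the operator $h^kQ-1$ is (after semiclassical rescaling) elliptic, so Proposition \ref{p:longRange} gives $(1-\chi_\e)T_{geo}\varphi_h=O(e^{-c/h})$ for any $\e>0$; the only thing left to control is the behaviour strictly inside $\{|\alpha_\xi|_{\alpha_x}<\e\}$. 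Equivalently, one must show that $\varphi_h$ itself (not just $T_{geo}\varphi_h$ away from $0$) has no analytic wavefront set accumulating at the zero section, i.e. that the cutoff $(1-\chi(\delta^{-1}|\alpha_\xi|_{\alpha_x}))$ can be dropped from \eqref{e:longRange}.

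The key idea is to use the eigenfunction equation to bootstrap: writing $\varphi_h = h^k Q\varphi_h + O_{L^2}(e^{-c/h})$, I would iterate this identity. Decompose $\varphi_h$ microlocally as $\varphi_h = \tilde S \chi_0(\e^{-1}|\alpha_\xi|_{\alpha_x})\tilde T\varphi_h + \tilde S(1-\chi_0(\e^{-1}|\alpha_\xi|_{\alpha_x}))\tilde T\varphi_h + O(e^{-c/h})$ using a global FBI transform $\tilde T = T_{geo}$ and left parametrix $\tilde S = S_{geo}$. The second term is $h$-microlocalized away from the zero section and can be handled by the long-range estimate once we know $\varphi_h$ is compactly microlocalized — which follows since $\varphi_h$ is an $L^2$-normalized solution of an elliptic equation $h^kQ\varphi_h = \varphi_h + O(e^{-c/h})$ with $Q$ classically elliptic of positive order, forcing $|\alpha_\xi|_{\alpha_x}$ to be bounded on $\WF_h(\varphi_h)$. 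For the first term, apply $h^kQ$: by Proposition \ref{p:moveWavefront}, $(1-\chi_1(\e^{-1}|\alpha_\xi|_{\alpha_x}))\tilde T Q\tilde S\chi_0(\e^{-1}|\alpha_\xi|_{\alpha_x})\tilde T\varphi_h = O_{C^\infty}(e^{-c/h})$, so applying $h^k Q$ to the near-zero-section piece keeps it near the zero section up to exponentially small errors. Hence $h^k Q\varphi_h$ is, modulo $O(e^{-c/h})$ and modulo the already-controlled far piece, again microlocalized in $\{|\alpha_\xi|_{\alpha_x}\leq 2N\e\}$; but $h^kQ\varphi_h = \varphi_h + O(e^{-c/h})$. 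Iterating $m$ times multiplies the support scale by $h^k$ at the operator level: more precisely, since $Q$ raises a factor of $h^{-k}\langle\xi\rangle^k$ and on $|\xi|\leq C$ this is $O(h^{-k})$, applying the identity $\varphi_h = h^{mk}Q^m\varphi_h + O(e^{-c/h})$ and the Cauchy-estimate characterization (the equivalence of \eqref{e:FBI}, \eqref{e:supNorm}, \eqref{e:cauchy}) forces Cauchy estimates $|(hD)^\alpha\varphi_h|\leq C\|\varphi_h\|C^{|\alpha|}(h|\alpha| + \e)^{|\alpha|}$ on a neighborhood of any interior point, with $\e$ as small as we like after enough iterations — this is exactly condition \eqref{e:cauchy}, which by the cited equivalence yields \eqref{e:FBI}, i.e. $T_{\tilde h}\varphi_h = O(e^{-c/\tilde h})$ on $\{|\xi|\geq \e\}$. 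Translating this local Euclidean FBI decay back to $T_{geo}$ via a partition of unity and the independence of analytic wavefront set on the choice of FBI transform gives $\|\chi_\e T_{geo}\varphi_h\|_{L^2} = O(e^{-c/h})$ after relabeling the $\e$'s.

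The main obstacle I anticipate is making the iteration quantitative and uniform: each application of $h^kQ$ costs a factor in the Cauchy-estimate constants (the $C^{|\alpha|}$ and the analytic-symbol factorials $j^j$ tracked in the proof of Proposition \ref{l:pseudo}), and one must ensure these losses are beaten by the gain in localization scale so that after $O(1/h)$ iterations one still has exponential, not merely $O(h^\infty)$, decay. This requires carefully balancing the number of iterations against $h$ — essentially choosing $m \sim c'/h$ and checking that the accumulated constants $C^m m!$-type factors are dominated by the $e^{-c/h}$ gains coming from the eigenfunction equation's exponentially small remainder and from pushing the support scale below any fixed $\e$. The bookkeeping is of the same flavour as in the proof of Proposition \ref{l:pseudo}, and I would model it on \cite[Proposition 2.2]{Ji}, but with the quantitative distance-to-zero-section tracking that the present section has been set up to provide.
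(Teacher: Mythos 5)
Your setup matches the paper's: the long-range ellipticity estimate \eqref{e:longRange}, the decomposition of $T_{geo}\varphi_h$ into a piece supported near the zero section and a piece supported away from it, and the use of Proposition \ref{p:moveWavefront} to ensure that $T_{geo}h^kQS_{geo}$ does not transport the near-zero-section piece outward. But the engine you propose for the final step is misidentified, and as stated it would not close. You claim that iterating $\varphi_h=h^kQ\varphi_h+O(e^{-c/h})$ shrinks the microlocal support scale, ``with $\e$ as small as we like after enough iterations.'' Nothing in the section supports this: Proposition \ref{p:moveWavefront} only prevents the support from growing beyond a fixed factor $2N\e$ (it actually enlarges the localization region), and Proposition \ref{l:pseudo} shows that a homogeneous analytic $\Psi$DO \emph{preserves} the Cauchy-estimate parameter $\e$ in \eqref{e:cauchy} --- it never improves it. So your iteration gains nothing in localization, and your own worry about accumulated $C^m m!$-type constants over $m\sim 1/h$ iterations is the symptom: there is no identified mechanism that beats those losses.

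The actual gain, which the paper exploits, is in \emph{operator norm}, not in support scale. Since $S_{geo}\chi_{2\e}T_{geo}$ is the anti-Wick quantization of $\chi_{2\e}$, one has $\|(h\nabla)^kS_{geo}\chi_{2\e}T_{geo}\|_{L^2\to L^2}\leq C\e^k+O(h)$, and since $Q$ is bounded $H^k\to L^2$ this gives $\|T_{geo}h^kQS_{geo}\chi_{2\e}T_{geo}\|_{L^2\to L^2}\leq C\e^k+O(h)<1$ for $\e$ small. Hence $(\chi_{2^N\e}T_{geo}h^kQS_{geo}\chi_{2\e}-1)$ is invertible by a convergent Neumann series with uniformly bounded inverse, and applying this inverse to the decomposed equation (then multiplying by $\chi_{\e/2}$, which annihilates the far piece by disjointness of supports) yields the claim in one step --- no balancing of iteration count against $h$ and no factorial bookkeeping. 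A secondary inaccuracy: the piece of $T_{geo}\varphi_h$ microlocalized away from the zero section is \emph{not} exponentially small (it carries essentially all of the mass, concentrated near the characteristic set $|\alpha_\xi|_{\alpha_x}=1$); the long-range estimate only kills the region away from both the zero section and the characteristic set, and the far term is disposed of at the end by support disjointness, not by smallness.
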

\begin{proof}

We first use the apriori estimate \eqref{e:longRange} together with Proposition \ref{p:moveWavefront} to decompose the eigenfunction equation \eqref{e:efuncEq} into two (essentially) orthogonal pieces - one supported near and one supported away from the zero section. We are then able to use a simple Neumann series argument to obtain estimates near the zero section. 

Fix $N>0$ to be chosen large enough later.
\begin{align}
O(e^{-c/h})&=T_{geo}(h^kQ-1)S_{geo}T_{geo}\varphi_h\nonumber=T_{geo}(h^kQ-1)S_{geo}(\chi_\e+(1-\chi_\e))T_{geo}\varphi_h\nonumber\\
&=T_{geo}(h^kQ-1)S_{geo}(\chi_\e+(1-\chi_{2^{3N}\e}))T_{geo}\varphi_h\label{e:cutMiddle}+O(e^{-c/h})\\
%&=T_{geo}(h^kQ-1)S_{geo}\chi_\e T_{geo}\varphi_h+(1-\chi_{2^{2N}\e})T_{geo}(h^kQ-1)S_{geo}(1-\chi_{2^{3N}\e})T_{geo}\varphi_h\nonumber\\
&\begin{aligned}&=(\chi_{2^{N}\e}T_{geo}h^kQS_{geo}\chi_{2\e}-1)\chi_\e T_{geo}\varphi_h\\
&\qquad+(1-\chi_{2^{2N}\e})T_{geo}(h^kQ-1)S_{geo}(1-\chi_{2^{3N}\e})T_{geo}\varphi_h+O(e^{-c/h})
\end{aligned}\label{e:decomposed}
\end{align}
In \eqref{e:cutMiddle}, we used \eqref{e:longRange} to see that for any $\psi$ with 
\begin{gather*} \supp \psi \subset \{|\sigma(q)(\alpha_x,\alpha_\xi)|<1\}\cap \{|\alpha_\xi|_{\alpha_x}>0\},\\
\psi T_{geo}\varphi_h=O{}(e^{-c/h}).
\end{gather*}
In \eqref{e:decomposed}, we choose $N$ large enough so that Proposition \ref{p:moveWavefront} implies
$$(1-\chi_{2^{N}\e})T_{geo}h^kQS_{geo}\chi_{2\e},\, \chi_{2^{2N}\e}T_{geo}h^kQS_{geo}(1-\chi_{2^{3N}\e})=O(e^{-c/h}).$$

In order to invert $(\chi_{2^{N}\e}T_{geo}h^kQS_{geo}\chi_{2\e}-1)$ by Neumann series, we obtain estimates on $\chi_{2^{N}\e}T_{geo}h^kQS_{geo}\chi_{2\e}$.
Since $S_{geo}\chi_{2\e}T_{geo}$ is the anti-Wick quantization of $\chi_{2\e}$, it is a pseudodifferential operator with symbol $\chi_{2\e}(\alpha),$ 
$$\|S_{geo}\chi_{2\e}T_{geo}\|_{L^2\to L^2}\leq 1+O{}(h),\quad \|(h\nabla)^k S_{geo}\chi_{2\e}T_{geo}\|_{L^2\to L^2}\leq C\epsilon^k +O{}(h).$$
Therefore, $\|Q\|_{H^k\to L^2}\leq C$ implies
$$\|T_{geo}h^kQS_{geo}\chi_{2\e} T_{geo}\|_{L^2\to L^2}\leq (C\e^k+O{}(h)).$$
In particular, 
\begin{equation}
\label{e:inverseNeumann}(\chi_{2^{N}\e}T_{geo}h^kQS_{geo}\chi_{2\e}-1)^{-1}=-\sum_{k=0}^\infty(\chi_{2^{N}\e}T_{geo}h^kQS_{geo}\chi_{2\e})^k.
\end{equation}
Hence, applying \eqref{e:inverseNeumann} on the left of \eqref{e:decomposed}
$$\chi_\e T_{geo}\varphi_h +(1-\chi_{2^{2N}\e})T_{geo}(h^kQ-1)S_{geo}(1-\chi_{2^{3N}\e})T_{geo}\varphi_h=O{}(e^{-c/h})$$
and, multiplying by $\chi_{\e/2}$ on the left, we have
$$\chi_{\e/2}T_{geo}\varphi_h=O{}(e^{-c/h}).$$
\end{proof}
%\end{comment}

%%%%%%%%%%%%%%%%%%%%%%%%%%%%%%%%%%%%%%%%%%%%%%%%%%%%%%%%%%%%%%%%%%%%%%%%%%%%%%%%%%%%
%%%%%%%%%%%%%%%%%%%%%%%%%%%%%%%%%%%%%%%%%%%%%%%%%%%%%%%%%%%%%%%%%%%%%%%%%%%%%%%%%%%%

\bibliography{biblio}
\bibliographystyle{alpha}

\end{document}